\setlist[enumerate]{leftmargin=*}
\title[Finiteness of pointed families of symplectic varieties]{Finiteness of pointed families of symplectic varieties: a geometric Shafarevich conjecture}
\author[L. Fu]{Lie Fu}
\address{Universit\'e de Strasbourg, IRMA, Strasbourg, France}
\email{lie.fu@math.unistra.fr}
\author[Z. Li]{Zhiyuan Li}\address{Shanghai Center for Mathematical Sciences, Fudan University, 2005 Songhu Road 200438, Shanghai, China}\email{zhiyuan\_li@fudan.edu.cn}
\author[T. Takamatsu]{Teppei Takamatsu}
\address{Department of Mathematics, Faculty of Science,
Saitama University,
255 Shimo-Okubo, Sakura-ku,
Saitama-shi, Saitama 338-8570,
Japan}
\email{teppeitakamatsu.math@gmail.com}
\author[H. Zou]{Haitao Zou}
\address{Faculty of Mathematics, Universität Bielefeld \\
Universitätsstraße 25, 33615 Bielefeld, Germany}
\email{hzou@math.uni-bielefeld.de}
\subjclass[2020]{14J42 (Primary),
14D10, 14D23, 32Q45}
\keywords{Holomorphic symplectic varieties, Geometric Shafarevich conjecture, Finiteness of families, Period map, Cone conjecture}
\numberwithin{equation}{section}
\theoremstyle{plain}
\newtheorem{theorem}{Theorem}[section]
\newtheorem{proposition}[theorem]{Proposition}
\newtheorem{corollary}[theorem]{Corollary}
\newtheorem{lemma}[theorem]{Lemma}
\newtheorem*{step}{Step}
\newtheorem*{Shafproblem}{Pointed Shafarevich problem}
\theoremstyle{definition}
\newtheorem{definition}[theorem]{Definition}
\newtheorem{question}[theorem]{Question}
\newtheorem{conjecture}[theorem]{Conjecture}
\newtheorem{example}[theorem]{Example}
\newtheorem{remark}[theorem]{Remark}
\theoremstyle{remark}
\DeclareMathOperator{\Aut}{Aut}
\DeclareMathOperator{\Spec}{Spec}
\DeclareMathOperator{\Sp}{sp}
\DeclareMathOperator{\End}{End}
\DeclareMathOperator{\hdg}{Hdg}
\DeclareMathOperator{\id}{id}
\DeclareMathOperator{\Sym}{Sym}
\DeclareMathOperator{\Sch}{Sch}
\DeclareMathOperator{\Hilb}{Hilb}
\DeclareMathOperator{\Grps}{Grpoids}
\DeclareMathOperator{\reg}{reg}
\DeclareMathOperator{\sing}{sing}
\DeclareMathOperator{\codim}{codim}
\DeclareMathOperator{\tf}{tf}
\DeclareMathOperator{\rk}{rk}
\DeclareMathOperator{\Mon}{Mon}
\DeclareMathOperator{\an}{an}
\DeclareMathOperator{\Div}{div}
\DeclareMathOperator{\prim}{prim}
\DeclareMathOperator{\tr}{tr}
\newcommand{\Cl}{{\rm Cl}}
\newcommand{\Pic}{{\rm Pic}}
\newcommand{\KS}{{\rm KS}}
\newcommand{\NS}{{\rm NS}}
\newcommand{\Gal}{{\rm Gal}}
\newcommand{\GL}{{\rm GL}}
\newcommand{\SO}{{\rm SO}}
\newcommand{\PGL}{{\rm PGL}}
\newcommand{\Nef}{{\rm Nef}}
\newcommand{\Amp}{{\rm Amp}}
\newcommand{\BA}{{\rm BA}}
\newcommand{\Bir}{{\rm Bir}}
\newcommand{\Mov}{{\rm Mov}}
\DeclareMathOperator{\Eff}{Eff}
\DeclareMathOperator{\Conv}{Conv}
\DeclareMathOperator{\et}{\acute{e}t}
\newcommand{\Sha}{\mathrm{Shaf}}
\newcommand{\bA}{\mathbf{A}}
\newcommand{\cB}{\mathcal{B}}
\newcommand{\cC}{\mathcal{C}}
\newcommand{\cP}{\mathcal{P}}
\newcommand{\cO}{\mathcal{O}}
\newcommand{\cX}{\mathcal{X}}
\newcommand{\cL}{\mathcal{L}}
\newcommand\rH{\mathrm{H}}
\newcommand{\rL}{\mathrm{L}}
\newcommand{\rV}{\mathrm{V}}
\newcommand\rT{\mathrm{T}}
\newcommand{\rO}{\mathrm{O}}
\newcommand\spin{{\bf sp}}
\newcommand{\HK}{\mathbf{M}}
\DeclareMathOperator{\lt}{lt}
\DeclareMathOperator{\Def}{Def}
\newcommand{\NN}{\mathbb{N}}
\newcommand{\CC}{\mathbb{C}}
\newcommand{\QQ}{\mathbb{Q}}
\newcommand{\RR}{\mathbb{R}}
\newcommand{\ZZ}{\mathbb{Z}}
\newcommand{\fU}{\mathfrak{U}}
\newcommand{\PP}{\mathbb{P}}
\newcommand{\Shaf}{\mathrm{Shaf}}
\DeclareMathOperator{\Uni}{H^{\sharp}}
\DeclareMathOperator{\PsAut}{PsAut}
\DeclareMathOperator{\inv}{inv}
\DeclarePairedDelimiterX\Set[1]\lbrace\rbrace{\def\given{\;\delimsize\vert\;}#1}
\begin{document}

\begin{abstract}
We investigate in this paper the so-called pointed Shafarevich problem 
for families of primitive symplectic varieties. More precisely, for any fixed pointed curve $(B, 0)$ and any fixed primitive symplectic variety $X$, among all locally trivial families of $\QQ$-factorial and terminal primitive symplectic varieties over $B$ whose fiber over $0$ is isomorphic to $X$, we show that there are only finitely many isomorphism classes of generic fibers. Moreover, assuming semi-ampleness of isotropic nef divisors, which holds true for all hyper-Kähler manifolds of known deformation types, we show that there are only finitely many such \textit{projective} families up to isomorphism. These results are optimal since we can construct infinitely many pairwise non-isomorphic (not necessarily projective) families of smooth hyper-Kähler varieties over some pointed curve $(B, 0)$ such that they are all isomorphic over the punctured curve $B\backslash \{0\}$ and have isomorphic fibers over the base point $0$. 
\end{abstract}

\maketitle
\setcounter{tocdepth}{1}
\tableofcontents

\section{Introduction}
\subsection{Hyperbolicity of moduli spaces and polarized Shafarevich conjecture}
Let $K$ be a number field and $g\geq 2$ an integer. In his ICM address, Shafarevich \cite{Shafarevich-ICM} conjectured that, up to isomorphism, there are only finitely many smooth projective curves of genus $g$ over $K$ with good reduction outside a fixed finite set of finite places of $K$. This was proven by Faltings in \cite{Faltings83}. 
A geometric analogue where $K$ is replaced by $k(B)$ the function field of a smooth curve $B$ over an algebraically closed field $k$ of characteristic zero, was established earlier by Arakelov \cite{Arakelov71} and Paršin \cite{Parsin68}.

Faltings' result can be reformulated as saying that the moduli stack $\HK_g$ of genus-$g$ smooth projective curves is \emph{arithmetically hyperbolic} over $\overline{\QQ}$. 
Recall that a separated Deligne--Mumford stack $\mathbf{M}$ of finite type over a number field is called arithmetically hyperbolic over $\overline{\QQ}$, if $\mathbf{M}$ has a model $\mathcal{M}$ over some finitely generated subring $A\subset \overline{\QQ}$,  such that for any finitely generated subring $A' \subseteq \overline{\QQ}$ containing $A$,  the set $\mathcal{M}(A')$ of $A'$-integral points on $\mathcal{M}$ is finite; see \cite{J20}.

There is a geometric analogue of the arithmetic hyperbolicity.
Recall that a separated scheme or, more generally, a separated Deligne–Mumford stack \(\mathbf{M}\) of finite type over an algebraically closed field \(k\) of characteristic \(0\) is called \emph{geometrically hyperbolic} if, for any pointed smooth integral curve \((B, 0)\) defined over $k$ and any $k$-point \(x\) of \(\mathbf{M}\), there exist only finitely many morphisms \[f \colon B \to \mathbf{M}\] such that \(f(0) = x\) (see \cite[Definition 2.1]{JL23}). 

The \emph{Lang--Vojta conjecture} predicts a deep relation between arithmetic hyperbolicity and geometric hyperbolicity. Roughly speaking,
\begin{center}
    Arithmetic hyperbolic $\Leftrightarrow$ Geometric hyperbolic.
\end{center}
See Javanpeykar's survey \cite[\S 12]{J20} for an account of the progress towards this conjecture. 

The search for evidence of the Lang--Vojta conjecture leads to profound results in arithmetic geometry.  Combining some fundamental results in Hodge theory, one can show that a separated Deligne--Mumford stack \(\HK\) that admits a quasi-finite period map is geometrically hyperbolic (see \cite[Theorem 1.7]{JL23}).  In particular, the moduli spaces of polarized varieties satisfying the infinitesimal Torelli theorem are geometrically hyperbolic, e.g.~moduli spaces of abelian varieties, K3 surfaces, and irreducible symplectic (hyper-K\"ahler) varieties, with a given polarization type. The arithmetic hyperbolicity of the following moduli spaces is established, verifying the Lang--Vojta conjecture:
\begin{itemize}
    \item moduli space of polarized abelian varieties, by Faltings \cite[Theorem 3.1]{RationalPoint};
    \item moduli space of polarized K3 surfaces and irreducible symplectic varieties, by Andr\'e \cite{Andre}.
\end{itemize}
There exist many other families of (naturally polarized) varieties for which the analogue of Shafarevich's conjecture for polarized pairs has been established; see the summary in \cite[p.~2]{FLTZ22}. 

Inspired by the Lang--Vojta conjecture, Javanpeykar--Sun--Zuo proposed the \emph{pointed Shafarevich conjecture} in \cite[Conjecture 1.5]{JSZ} for polarized varieties with semiample canonical bundle.
Recently, in \cite{JavanpeykarLuSunZuo},  new finiteness results in this direction are obtained, which go beyond the situations where the infinitesimal Torelli theorem holds.
 
\subsection{Beyond moduli spaces: unpolarized Shafarevich conjecture}
From now on, we focus on abelian varieties and symplectic varieties (and their singular generalizations).  
Putting aside the interpretation using hyperbolicity of moduli spaces and going back to the original Shafarevich question, it is natural to ask about the finiteness of abelian schemes (of a fixed dimension) or symplectic varieties (of a fixed deformation type) over the base $\cO_{K,S}$, the $S$-integers in a number field $K$, without assuming the existence of a polarization with a bounded degree. This strengthening of the (polarized) Shafarevich conjecture is the so-called \emph{unpolarized Shafarevich conjecture}. As the name suggests, its extra difficulty stems from the lack of a polarization with uniformly bounded degree; hence, the class of varieties in question does not even fit into a single moduli stack of finite type. 

The unpolarized Shafarevich conjecture for abelian varieties is solved by Zarhin's trick; see \cite[Remark, Reduction 1, p168]{RationalPoint}. For K3 surfaces, it is verified by She in \cite{She} (see Takamatsu \cite{TakamatsucohomK3} for further discussions). For higher-dimensional smooth irreducible symplectic varieties of a fixed deformation type, the unpolarized Shafarevich conjecture, as well as its suitable cohomological variants, are proven in our previous work \cite{FLTZ22}.

\subsection{Geometric unpolarized Shafarevich conjecture}

Regarding the interplay between Geometry and Arithmetic as in the Lang--Vojta conjecture, it is interesting to formulate and study the geometric analogue of the unpolarized Shafarevich conjecture. As in the definition of geometric hyperbolicity, the meaningful statement is about the finiteness of families over pointed curves. We  formulate such a conjecture as follows for \textit{primitive symplectic varieties}, the singular generalizations of projective hyper-Kähler varieties.

\begin{Shafproblem}
\label{prob:Shafproblem}
Let $k$ be an algebraically closed field of characteristic $0$. For a \emph{pointed smooth integral curve} \((B,0)\)  and a primitive symplectic variety \( X \) defined over \( k \),  is the set
\begin{equation}\label{eq:pointshaf}
\tag{\textcolor{Brown4}{$\star$}}
    \Set*{\cX \xrightarrow{\pi} B \given \parbox{43ex}{ $\pi$ is a locally trivial family of primitive symplectic varieties with $\cX_0 \coloneqq \pi^{-1}(0) \cong X$} }\big/ \cong
\end{equation}
finite? Here $\cX$ is  an algebraic space.
\end{Shafproblem}

The families in \eqref{eq:pointshaf} are not required to be projective nor have a weak polarization of bounded degree. This leads to two main difficulties of this problem:
\begin{enumerate}
    \item (Unboundedness) a priori, the set \eqref{eq:pointshaf} is not parametrized by a moduli stack of finite type over $k$.
    \item (Non-separatedness) the relevant moduli problem is highly non-separated.
\end{enumerate}
We stress that (1) implies that the pointed Shafarevich problem is not a direct consequence of the (arithmetic or geometric) hyperbolicity of some moduli stacks. Moreover, (2) can indeed lead to some intrinsic infiniteness:  we construct in \Cref{prop:eg} infinitely many non-isomorphic families of smooth irreducible symplectic varieties over some pointed curve, such that they are all isomorphic over the punctured curve and have isomorphic fibers over the base point.

\subsection{Main results}
In this paper, we provide two results regarding the pointed Shafarevich problem for primitive symplectic varieties. The first one, \Cref{thm:introgenfinite}, concerns only the finiteness of the generic fibers; the second one, \Cref{thm:introisomfinite}, concerns only projective families.

\begin{theorem}\label{thm:introgenfinite}
Let $(B,0)$ be a pointed smooth connected curve defined over an algebraically closed field $k$ of characteristic 0.
Let \( X \) be a \(\mathbb{Q}\)-factorial terminal primitive symplectic variety over \( k \).  
\begin{enumerate}
    \item If \( b_2(X) \neq 4 \), then there are only finitely many isomorphism classes for the generic fibers of families in \eqref{eq:pointshaf};
    \item If \( b_2(X) = 4 \), the same finiteness holds for \emph{non-isotrivial} families in \eqref{eq:pointshaf}.
\end{enumerate}
\end{theorem}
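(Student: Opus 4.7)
The plan is to reduce the finiteness of generic fibers to a pointed Shafarevich-type finiteness of period maps into an arithmetic quotient of a type-IV period domain, and then convert back via global Torelli for $\QQ$-factorial terminal primitive symplectic varieties.

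\textbf{Step 1: The period map.} Since $\pi\colon \cX\to B$ is locally trivial, $R^2\pi_*\QQ$ is a polarised variation of Hodge structure of K3 type with the Beauville--Bogomolov--Fujiki form. Its monodromy lands in an arithmetic subgroup $\Gamma\subset \rO(H^2(X,\QQ))$ (Markman's monodromy results, extended to the singular primitive symplectic setting). After a finite étale base change of $B$---which only enlarges the count of generic fibers by a fixed finite factor---we obtain a pointed period morphism
\[
\phi_\cX\colon (B,0)\longrightarrow (\Gamma\backslash \cD,[X]),
\]
with $\cD$ the K3-type period domain of complex dimension $b_2(X)-2$, and $[X]$ the Hodge-theoretic point of $X$.

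\textbf{Step 2: Shafarevich for the period domain.} For $b_2(X)\neq 4$, the domain $\cD$ is an irreducible Hermitian symmetric space of type IV, and $\Gamma\backslash \cD$ is (a connected component of) an orthogonal Shimura variety. Via the Kuga--Satake correspondence, the period map lifts to a family of abelian varieties on $B$, so pointed finiteness of $(B,0)\to (\Gamma\backslash \cD,[X])$ reduces to the geometric Shafarevich for abelian schemes---the philosophy already exploited in \cite{FLTZ22}. For $b_2(X)=4$ one has $\cD\cong \mathbb{H}\times \mathbb{H}$, so $\Gamma\backslash \cD$ is essentially a Hilbert modular surface. An isotrivial family produces a constant period map; the non-isotriviality hypothesis thus guarantees that at least one factor of $\phi_\cX$ is a non-constant map to a modular curve $\Gamma_i\backslash \mathbb{H}$, to which the geometric hyperbolicity of modular curves applies.

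\textbf{Step 3: Back to the generic fiber.} By the global Torelli theorem for $\QQ$-factorial terminal primitive symplectic varieties (Verbitsky--Bakker--Lehn, and its further refinements), the generic fiber $\cX_\eta$ is determined, up to birational equivalence over $\overline{k(B)}$, by the polarised VHS on $H^2$ together with the reference isomorphism at the base point $0$. The $\QQ$-factorial terminal MMP then ensures that each birational class contains only finitely many isomorphism classes, parametrised by the chambers of the movable cone. Combining this with Step 2 yields the desired finiteness.

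\textbf{Main obstacle.} The core difficulty lies in Step 2: establishing a \emph{pointed} Shafarevich statement for an orthogonal Shimura variety in the absence of any polarisation of bounded degree upstairs. This forces one to propagate the Kuga--Satake abelian family over the full base $B$---not merely at the distinguished point---where twists and the geometric monodromy must be controlled simultaneously. A secondary but genuine subtlety is bridging $\overline{k(B)}$-birational equivalence, delivered by Torelli, with $k(B)$-isomorphism of generic fibers; this is precisely where the $\QQ$-factoriality and terminality hypotheses enter essentially.
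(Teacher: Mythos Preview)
Your outline has the right architecture---reduce to pointed Shafarevich for abelian schemes via Kuga--Satake, then recover the generic fiber from the period---but there is a genuine gap at the heart of Step~2, and it is precisely the obstruction the paper isolates as new.

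The full period domain $\cD$ attached to $H^2(X,\ZZ)$ with the Beauville--Bogomolov form has signature $(3,b_2-3)$; it is \emph{not} Hermitian symmetric, and there is no Kuga--Satake construction for it. To get an honest type~IV domain and a Kuga--Satake map one must pass to the orthogonal complement of a \emph{weak polarization} $\underline{L}$ on $\cX/B$ (the paper's Definition~4.6, Proposition~4.8), landing in $\Gamma_h\backslash\Omega_{\Lambda_h}$ with $\Lambda_h=h^\perp$ of signature $(2,b_2-3)$. The crux is that the class $h=c_1(\underline{L}_0)\in\Lambda$ \emph{depends on the family} $\cX\to B$: different families carry weak polarizations of different, a priori unbounded, degree. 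Consequently both the target Shimura variety and, more fatally, the image $\KS_{\pi,\underline{L}}(0)\in\bfA_{g,d,n}$ of the base point vary with $\cX$. So you are not in a position to invoke pointed Shafarevich for abelian schemes, because the ``point'' is not fixed. Your Main Obstacle paragraph gestures toward difficulties with monodromy and twists, but does not identify this issue.

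The paper's resolution is two-fold. First, the \emph{uniform} Kuga--Satake construction (Theorem~5.2) primitively embeds every $\Lambda_h$ into a single fixed even unimodular lattice $H^\sharp$, so that all families map to the \emph{same} $\bfA_{g,d,n}$. Second---and this is the new idea---Proposition~5.3 shows that although $\KS_{\pi,\underline{L}}(0)$ still depends on $\underline{L}$, it depends only on the primitive embedding $\rT(X)\hookrightarrow H^\sharp$, and by Nikulin's finiteness theorem there are only finitely many such embeddings up to $\rO(H^\sharp)$. Only then can geometric hyperbolicity of $\bfA_{g,d,n}$ be applied. The output is not directly ``finitely many period maps'' but rather finiteness of the geometric N\'eron--Severi lattices $\NS(\cX_{\bar\eta})$ (Corollary~5.4); one then uses the boundedness of MBM classes to produce a polarization of uniformly bounded degree on each $\cX_{\eta'}$ and invokes the \emph{polarized} finiteness (Theorem~4.1), rather than global Torelli.

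A secondary point: your Step~3 is too quick. Passing from $\overline{k(B)}$-birational equivalence to $k(B)$-isomorphism requires both the cone conjecture over a \emph{non-closed} field (established here as Theorem~3.4, not available in the literature for singular primitive symplectics) and a separate finiteness-of-twists statement (Proposition~3.8). Finiteness of chambers in the movable cone is equivalent to the cone conjecture and is not an automatic consequence of $\QQ$-factorial terminal MMP. Finally, your treatment of $b_2=4$ via $\mathbb{H}\times\mathbb{H}$ is off: the paper instead argues that non-isotriviality forces geometric Picard number one, whence the polarization degree is immediately bounded by Corollary~5.4.
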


\Cref{thm:introgenfinite} can be viewed as a geometric analogue of the (arithmetic) unpolarized Shafarevich conjecture proved in our previous work \cite{FLTZ22} (see also \cite{Andre}, \cite{She}, \cite{TakamatsucohomK3}). Moreover, it provides a generalization from hyper-K\"ahler manifolds to symplectic varieties with mild singularities. {The proof of \Cref{thm:introgenfinite} will be given in \Cref{sec:Finiteness of the generic fibers} in the refined form of \Cref{thm:FiniteGenericFibers}.}

\begin{theorem}\label{thm:introisomfinite}
Under the assumptions of Theorem \ref{thm:introgenfinite}, define
\[
\operatorname{Shaf}(B,X) \coloneqq
\Set*{\cX \xrightarrow{\pi} B \given
\parbox{40ex}{
$\pi$ is a flat and \textbf{projective} family of {$\mathbb{Q}$-factorial terminal primitive symplectic varieties} satisfying $\pi^{-1}(0) \cong X$}} \Big/ \cong.
\]
\begin{enumerate}
    \item If \( b_2(X) \neq 4 \) and for any family \(\mathcal{X} \to B \in \operatorname{Shaf}(B,X)\), all nef divisors on a very general fiber \( \mathcal{X}_b \) are semi-ample,  then the set \( \operatorname{Shaf}(B,X) \) is finite.
    \item If \( b_2(X) = 4 \), the set \( \operatorname{Shaf}(B,X) \) is finite when restricted to \emph{non-isotrivial} families.
\end{enumerate}
\end{theorem}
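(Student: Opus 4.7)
The plan is to combine the finiteness of generic fibers from \Cref{thm:introgenfinite} with a relative minimal model program argument over $B$, and then conclude by a Kawamata--Morrison-type finiteness for the movable cone of the fixed central fiber $X$.

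\textbf{Step 1: Reduction to a fixed generic fiber.} \Cref{thm:introgenfinite} applies a fortiori to the (more restrictive) projective setting, so the generic fibers $\cX_\eta$ of families $\cX \to B$ in $\Shaf(B,X)$ fall into finitely many isomorphism classes. It therefore suffices to fix an isomorphism class of generic fiber $Y_\eta$ and bound the number of projective extensions $\cX \to B \in \Shaf(B,X)$ whose generic fiber realizes $Y_\eta$.

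\textbf{Step 2: Relative birational comparison.} Given two such families $\cX,\cX' \to B$, the identification of their generic fibers with $Y_\eta$ spreads out to an isomorphism over a Zariski open of $B$; after shrinking $B$ around $0$ we obtain a birational map $\phi \colon \cX \dashrightarrow \cX'$ over $B$ which is an isomorphism outside $0$. Since both families have trivial relative canonical class, are $\mathbb{Q}$-factorial terminal, and have irreducible central fibers, $\phi$ is an isomorphism in codimension one near $\cX_0, \cX'_0$.

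\textbf{Step 3: MMP and flop decomposition.} Running the relative minimal model program over $B$ (available for relatively Calabi--Yau / symplectic varieties through work of Kawamata, Druel, Lehn--Pacienza, etc.), $\phi$ factors as a finite sequence of flops over $B$, each centered in the central fiber. Here the semi-ampleness hypothesis on nef divisors of a very general fiber is crucial: it ensures that relatively nef divisors on the total space $\cX$ are relatively semi-ample, which in turn produces the birational contractions needed for the flops. The set of projective families with generic fiber $Y_\eta$ is thereby encoded by a chamber decomposition of the relative movable cone, whose specialization to $\cX_0 \cong X$ gives a chamber in $\Mov(X)$.

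\textbf{Step 4: Kawamata--Morrison finiteness.} Two families give $B$-isomorphic objects in $\Shaf(B,X)$ if and only if their chambers differ by the action of $\Bir(X)$ fixing $X$ as the central fiber. The cone conjecture for primitive symplectic $\mathbb{Q}$-factorial terminal varieties (known cases due to Amerik--Verbitsky, Markman, Matsushita, and singular generalizations by Lehn--Pacienza, Menet--Riess) asserts that $\Mov(X)$ admits a rational polyhedral fundamental domain under $\Bir(X)$. Combined with the hypothesis $b_2(X) \neq 4$ in part (1), or non-isotriviality in part (2), this yields only finitely many chambers up to $\Bir(X)$, hence only finitely many isomorphism classes in $\Shaf(B,X)$.

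\textbf{Main obstacle.} The key difficulty is Step 3: running the relative MMP over $B$ in the locally trivial, possibly singular, primitive symplectic setting, and in particular showing that fiberwise semi-ampleness of nef divisors lifts to semi-ampleness of relatively nef divisors on the total space (an application of invariance of plurisections / relative abundance in this deformation class). Once this is secured, the cone-theoretic Step 4 is the mechanism that converts the birational MMP output into genuine finiteness of isomorphism classes.
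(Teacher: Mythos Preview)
Your Steps 1 and 2 match the paper's argument: fix the generic fiber by \Cref{thm:introgenfinite}, then use Matsusaka--Mumford (\Cref{prop:unpolarizedMatsusaka-Mumford}) at each closed point of $B$ to see that two families with the same generic fiber differ by a birational map over $B$ that is an isomorphism in codimension one, i.e.\ a small $\QQ$-factorial modification over $B$ (the total spaces being $\QQ$-factorial terminal by \Cref{lemma:Total Space Qfactorial}).

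The genuine gap is in Step 4. You apply the cone conjecture to the \emph{central fiber} $X$ and assert that two families are $B$-isomorphic if and only if their associated chambers in $\Mov(X)$ differ by $\Bir(X)$. But SQMs of $\cX$ over $B$ are indexed by chambers in $\Mov(\cX/B) \cong \Mov(\cX_\eta)$ modulo $\PsAut(\cX/B) = \Bir(\cX_\eta)$, where $\cX_\eta$ is the \emph{generic} fiber over the non-closed field $k(\eta)$. Specialization gives an injection $\Bir(\cX_\eta) \hookrightarrow \Bir(X)$, yet an element of $\Bir(X)$ outside the image can identify chambers that do not come from $B$-isomorphic families, and need not even preserve the subspace $\NS(\cX_\eta)_\RR \subset \NS(X)_\RR$ when the Picard rank jumps at $0$. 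So finiteness of $\Bir(X)$-orbits in $\Mov(X)$ does not yield finiteness of $\Bir(\cX_\eta)$-orbits in $\Mov(\cX_\eta)$. The paper resolves this by proving the Kawamata--Morrison cone conjecture for $\cX_\eta$ over the non-algebraically closed field $k(\eta)$ directly (\Cref{thm:coneconj}, following \cite{takamatsu2022} and \cite{LMP22}), and then applying a packaged relative finiteness result (\Cref{cor:finiteSQM}, via \Cref{prop:RelFiniteness} and \cite{HPX24}) that bounds the number of SQMs over $B$ from the movable cone conjecture on the generic fiber together with the semi-ampleness hypothesis; no explicit flop decomposition over $B$ is carried out.
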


Since the semi-ampleness condition in \Cref{thm:introisomfinite}, also known as the SYZ conjecture for hyper-K\"ahler manifolds (see \Cref{sec:proofofKnownTypes}), has been established for smooth hyper-Kähler varieties of known deformation types, we can obtain the following unconditional result:
\begin{corollary}\label{cor:introFiniteKnonwTypes}
If $X$ is smooth of one of the known deformation types: $\operatorname{K3}^{[n]}$, $\operatorname{Kum}^{n}$, $\operatorname{OG6}$, or $\operatorname{OG10}$,  then the set \( \operatorname{Shaf}(B,X) \) is finite.
\end{corollary}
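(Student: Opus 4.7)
The plan is to derive the corollary as a direct application of Theorem \ref{thm:introisomfinite}(1), after verifying its two hypotheses for each of the four known deformation types: $\operatorname{K3}^{[n]}$, $\operatorname{Kum}^{n}$, $\operatorname{OG6}$, and $\operatorname{OG10}$. The numerical hypothesis $b_2(X)\neq 4$ is immediate from the classical computations
\[
b_2(\operatorname{K3}^{[n]})=23,\quad b_2(\operatorname{Kum}^{n})=7,\quad b_2(\operatorname{OG6})=8,\quad b_2(\operatorname{OG10})=24.
\]
Hence the crux is to verify, for every family $\mathcal{X}\to B$ in $\operatorname{Shaf}(B,X)$, that every nef divisor on a very general fiber $\mathcal{X}_b$ is semi-ample.

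I would split the argument according to the sign of the Beauville--Bogomolov--Fujiki form $q$. If $L$ is a nef divisor on $Y:=\mathcal{X}_b$ with $q(L)>0$, then $L$ is big and nef, so the base-point-free theorem yields semi-ampleness. The genuinely hyper-Kähler-specific case is the isotropic one, $q(L)=0$, where semi-ampleness amounts to the \emph{hyper-Kähler SYZ conjecture}, predicting that $L$ induces a Lagrangian fibration $Y\to\mathbb{P}^{\dim Y/2}$. This conjecture is now established for all four known deformation types: by Bayer--Macrì (together with Markman's input) for $\operatorname{K3}^{[n]}$-type, by Yoshioka combined with Bayer--Macrì for $\operatorname{Kum}^{n}$-type, by Mongardi--Rapagnetta for $\operatorname{OG6}$-type, and by Mongardi--Onorati for $\operatorname{OG10}$-type. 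Since these theorems initially produce the Lagrangian fibration on some birational model $Y'$, a standard transfer argument, exploiting that birational hyper-Kählers are $K$-equivalent and share second cohomology up to monodromy, allows one to conclude semi-ampleness of $L$ on $Y$ itself.

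The only subtle point, which I expect to be the main obstacle, is to confirm that the very general fiber $\mathcal{X}_b$ actually satisfies the genericity hypotheses under which the cited SYZ results are stated. Because $\mathcal{X}\to B$ is locally trivial, the Beauville--Bogomolov--Fujiki lattice is globally constant along $B$ and the Picard rank of $\mathcal{X}_b$ is minimal among all fibers; a Hodge-theoretic argument identifying $\mathcal{X}_b$ with a sufficiently general point of its Kuranishi deformation space then places it within the scope of the above theorems. Once this genericity is pinned down, Theorem \ref{thm:introisomfinite}(1) applies and gives the finiteness of $\operatorname{Shaf}(B,X)$ unconditionally.
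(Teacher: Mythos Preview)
Your proposal is correct and follows the same approach as the paper: invoke Theorem~\ref{thm:introisomfinite}(1) after verifying the semi-ampleness hypothesis via the established SYZ conjecture for the four known deformation types, citing the same references (Bayer--Macr\`i, Yoshioka, Mongardi--Rapagnetta, Mongardi--Onorati). Your concerns about genericity of the very general fiber and about transferring semi-ampleness from a birational model are unnecessary, since the cited SYZ theorems apply to \emph{every} smooth hyper-K\"ahler manifold of the given deformation type and produce the Lagrangian fibration directly on $Y$ whenever $L$ is nef (not merely movable).
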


\begin{remark}
    Some comments on the conditions in \Cref{thm:introgenfinite} and \Cref{thm:introisomfinite} are in order.
\begin{enumerate}[(a),leftmargin=*]
\item If any locally trivial family of primitive symplectic varieties over $B$ admits a simultaneous $\QQ$-factorial terminalization over $B$, then the assumption that $X$ is $\QQ$-factorial terminal in \Cref{thm:introgenfinite} can be removed. By \cite[Proposition 5.22]{BL22} or more generally \cite[Corollary 2.29]{BGL22}, such simultaneous $\QQ$-factorial terminalization exists locally around $0 \in B$. 
    \item By Namikawa \cite{Namikawa06}, a flat family of primitive symplectic varieties with  $\QQ$-factorial and terminal fibers is locally trivial.
\item  By \Cref{thm:introgenfinite}, there are only finitely many birational classes of pointed locally trivial families. To prove the finiteness of isomorphism classes in \Cref{thm:introisomfinite}, we use the Kawamata–Morrison cone conjecture in the relative setting studied in  \cite{LZ22}, \cite{Li23} and \cite{HPX24}. The hypothesis of semi-ampleness comes from their results; see \Cref{subsec:RelativeCone} for details.
\end{enumerate}
\end{remark}

Motivated by the \emph{uniform Shafarevich conjecture} for families of canonically polarized varieties  (cf.~\cite[Theorem 3.1]{Caporaso02},\cite[Theorem 1.2]{Heier04}, \cite[Theorem 1.3]{Hei13},  \cite[Corollary 6.5]{KovacsLieblich}), it is natural to ask whether in \Cref{thm:introisomfinite} there is a uniform  upper bound, depending only on the topology of $(B, 0)$ and the deformation type of $X$, of the number of isomorphism classes of pointed projective families of primitive symplectic varieties. More precisely, we have the following question.
\begin{question}[Uniform boundedness]\label{conj:UniformBoundedness}
Let the notation be as in the Pointed Shafarevich Problem \eqref{eq:pointshaf}. Fix a group $\Gamma$ and fix a (locally trivial) deformation type $M$ of primitive symplectic variety. 
    Is there  an integer $N$, depending only on $\Gamma$ and $M$, such that 
    \[
    |\Sha(B,X) | \leq N,
    \]
    for any smooth pointed curve $(B,0)$ with $\pi_1(B, 0)\cong \Gamma$ and any primitive symplectic variety $X$ of the fixed deformation type $M$? 
   A weaker problem is whether there exists such a uniform bound which depends only on $\Gamma$ and $X$. 
\end{question}

\subsection{Strategy of proof and challenges}
As mentioned before, the families in \eqref{eq:pointshaf} are not assumed to be projective. {Assume that $k = \CC$}. We can endow their variations of Hodge structure with a \emph{weak polarization} (see \Cref{def:weakpolarization} for the definition).
Then, to reduce the proof to the polarized case, we need a kind of Zarhin's trick for these polarized Hodge structures of K3-type to bound the degree of weak polarizations in the \eqref{eq:pointshaf}. This will establish the finiteness of geometric isomorphism classes of the generic fiber in \Cref{thm:introgenfinite}. Our approach builds on the \emph{uniform Kuga-Satake construction} (see \Cref{thm:uniformKugaSatake}) developed in \cite{She}, \cite{OS18} and \cite{FLTZ22}. This was central to our proof of the arithmetic Shafarevich conjecture for smooth irreducible symplectic varieties in \cite{FLTZ22}. However, when adapting this to the pointed Shafarevich problem, we encounter a new obstruction: the associated uniform Kuga--Satake families for locally trivial families in \eqref{eq:pointshaf} are not pointed by a same abelian variety. This difficulty is resolved through a novel argument leveraging the finiteness of lattice embeddings (\Cref{prop:KS-finiteness}). 

In addition, to establish \Cref{thm:introgenfinite} and \Cref{thm:introisomfinite}, we require the Kawamata--Morrison conjecture for primitive symplectic varieties over a non-algebraically closed field {to deduce the finiteness of generic fibers and families}. For the smooth case, this is proven by the third author in \cite{takamatsu2022}. As a byproduct, in this paper, we prove the Kawamata--Morrison cone conjectures for $\QQ$-factorial primitive symplectic varieties (with $b_2 \geq 5$) over a non-algebraically closed field whose singular locus $X^{\sing}$ has codimension $\geq 4$ (see \Cref{thm:coneconj}).

To extend the finiteness results to singular primitive symplectic varieties, we establish several foundational results concerning the algebraic structure of their moduli spaces. For example, the Matsusaka--Mumford theorem for locally trivial families of primitive symplectic varieties (\Cref{prop:unpolarizedMatsusaka-Mumford}) over a geometric curve. Our method here relies on the existence of simultaneous resolution of singularities in a locally trivial family, which is still missing when $B$ is a general Dedekind scheme in positive or mixed characteristic. These developments constitute the technical core of our approach.

\subsection*{Conventions}
Throughout this paper, we let $k$ be an algebraically closed field of characteristic zero, unless otherwise noted. 
For an algebraic variety $X$ over $k$, we denote by $X_{\reg}$ the regular locus of $X$. For any $i\in \ZZ$, $b_i(X)$ denotes the $i$-th Betti number of $X$, that is, the dimension of the étale cohomology $\rH^i_{\et}(X,\QQ_{\ell}) $ as $\QQ_{\ell}$-vector spaces. For a morphism $X \to Y$ we use $X_{y}$ to denote the fiber at any point $y \in Y$.

\subsection*{Acknowledgment: } We thank Ariyan Javanpeykar, Chen Jiang, Christian Lehn, Ben Moonen, Long Wang, Shou Yoshikawa, and Kang Zuo for helpful discussions. The authors T.~Takamatsu and H.~Zou gratefully acknowledge the kind hospitality and support of Tokyo University of Science, where part of this work was carried out. We are also  grateful to anonymous referees for help comments.

\section{Primitive symplectic varieties}\label{sec:HK}

\subsection{Primitive symplectic varieties} 

In this section, we work more generally over a base field $k$ of characteristic zero. We recall the following basic concepts of symplectic varieties in the singular setting, due to Beauville \cite{Beau00}, Fujiki \cite{Fujiki-Katata} (orbifold case), and Bakker--Lehn \cite{BL22}.

\begin{definition}
A normal projective variety $X$ over $k$ is called a
\emph{symplectic variety}, if the regular locus $X_{\reg}$ carries a non-degenerate closed algebraic 2-form $\sigma$, such that there exists a resolution of singularities $\pi \colon Y \to X$ such that $\pi^* \sigma$ extends to an algebraic $2$-form on $Y$. 

A symplectic variety $X$ is called \emph{primitive symplectic}, if moreover
\begin{enumerate}
    \item $\rH^1(X,\cO_X) = 0$, and
    \item $\rH^0(X_{\reg}, \Omega_{X_{\reg}}^2) = k \sigma$.
\end{enumerate}

\end{definition}

A smooth projective irreducible symplectic variety, also known as projective hyper-K\"ahler variety  (see \cite{Beauville83localtorelli} and \cite{Huybrechts}), is clearly a primitive symplectic variety ((cf.\ \cite{Schwald})).

\begin{remark}
    The singularities of a primitive symplectic variety is always rational, i.e., for any resolution of singularities $g \colon Y \to X$, we have $Rg_*\cO_Y = \cO_X$; see \cite[Proposition 1.3]{Beau00} or \cite[Corollary 3.5]{BL22}. Moreover, a primitive symplectic variety $X$ has only terminal singularities if and only if $\codim X^{\sing} \geq 4$ by Namikawa \cite[Corollary 1]{Namikawa01}.
\end{remark}

\begin{example}
Moduli spaces of sheaves on K3  surfaces provide important examples of symplectic varieties; see \cite{Perego20} for more details.
 Let $(S,H)$ be a smooth polarized K3 surface. For a primitive Mukai vector $v_0 \in K_0(S)$ with $v_0^2 = k \geq 0$ (which is always even), we can consider the moduli space of $H$-semistable sheaves $M_H(S,v)$ on $S$ with Mukai vector $v=mv_0$ for some integer $m >0$. It is an irreducible projective normal variety if it is non-empty, which admits a symplectic form on the regular locus. Assume further that $H$ is $v$-generic.
    \begin{enumerate}
        \item If $k\geq 0, m=1$, then $M_H(S,v)$ is a smooth irreducible symplectic variety of dimension $k+2$.
        \item If $k=0, m \geq 2$, then $M_H(S,v) = S'^{(n)}$, the $n$-th symmetric product of a K3 surface $S'$. It is a primitive symplectic variety since it has a symplectic resolution $S'^{[n]} \to S'^{(n)}$ by the Hilbert scheme of $n$ points. However, it admits a quasi-étale covering $S'^{\times n} \to S'^{(n)}$ such that $\dim H^0(S'^{\times n},\Omega_{S'^{\times n}}^2) = n$, thus is not irreducible when $n \geq 2$ in the sense of \cite[Definition 8.16]{GKP}.
        \item If $k \geq 2,m\geq 2$, then it is a $\QQ$-factorial (see \cite{KLS06} and \cite{PR14})  primitive symplectic variety. If $k=m=2$, then it is at worst $2$-factorial (\cite[Theorem 1.1]{PR14}) and it admits a symplectic resolution, which is of OG10 deformation type. The rest cases are locally factorial (\cite[Theorem A]{KLS06}) with terminal singularities, since their singular loci have codimension $\geq 4$ (cf.\ \cite[Proposition 6.1]{KLS06}).
    \end{enumerate}
\end{example}

\begin{example}
    Recently, a new series of examples of $\QQ$-factorial terminal primitive symplectic varieties with $b_2(X) \geq 24$ is constructed in \cite{LLX24}, by compactifying the relative Jacobian fibration of universal families of cubic fivefolds containing a fixed cubic fourfold.
\end{example}

\begin{example}
Unlike in the smooth case, based on \cite{Fujiki-Katata}, Fu--Menet \cite[Section 5]{FuMenet-BettiNumber} constructed examples of primitive symplectic varieties with very small second Betti numbers (e.g.~$b_2 \leq 5$) by taking the symplectic quotients of smooth irreducible symplectic varieties; see also \cite[Example 6.1-6.3]{KL24} for a recent summary.
\end{example}

The automorphism groups of primitive symplectic varieties behave similarly as those of smooth ones:

\begin{lemma}\label{lem:finitePolarizedAut}
   Let $X$ be a primitive symplectic variety over $k$. Then
    \begin{enumerate}
        \item $\rH^0(X,T_X) =0$.
        \item For any ample line bundle $\cL$, the automorphism group scheme $\underline{\Aut}(X,\cL)$ is finite and \'{e}tale over $k$.
        \item If $X$ is terminal, then the birational automorphism group functor $\underline{\Bir}(X)$ (see \cite{Hanamura-Bir} for the precise definition) is represented by a locally of finite type group scheme, and $\dim \underline{\Bir}(X)=0$.  Moreover, $\Bir(X_L) =\underline{\Bir}(X)(L)$ is countable as a set for any field $k \subset L$.
    \end{enumerate}
\end{lemma}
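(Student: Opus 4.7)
The three claims build on each other, so my plan is to establish (1) first, then deduce the dimension assertions in (2) and (3) via tangent-space computations, and finally handle the finer structural claims (finiteness, étaleness, countability) separately.

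For (1), the key is to exploit the symplectic form $\sigma$ on $X_{\reg}$: contraction with $\sigma$ gives an isomorphism $T_{X_{\reg}} \xrightarrow{\sim} \Omega^1_{X_{\reg}}$. Since $X$ is normal and $T_X$ is reflexive, pushing forward along $j \colon X_{\reg}\hookrightarrow X$ yields $T_X \cong \Omega^{[1]}_X$. For any resolution $\pi \colon Y \to X$, the Kebekus--Schnell extension theorem for rational singularities (subsuming the klt case of Greb--Kebekus--Kovács--Peternell) identifies $\rH^0(X, \Omega^{[1]}_X)$ with $\rH^0(Y, \Omega^1_Y)$. Rationality of the singularities of $X$ together with the primitive symplectic assumption $\rH^1(X, \cO_X)=0$ forces $\rH^1(Y, \cO_Y)=0$, and Hodge symmetry then gives $\rH^0(Y, \Omega^1_Y)=0$, proving (1).

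For (2), representability of $\underline{\Aut}(X, \cL)$ by a locally of finite type $k$-group scheme is standard (Matsusaka--Mumford, via the Hilbert scheme of graphs in $X \times X$). Ampleness of $\cL$ pins down the Hilbert polynomial of such graphs, placing $\underline{\Aut}(X, \cL)$ inside a quasi-compact open of a Hilbert scheme, so it is of finite type. Its Lie algebra embeds in $\rH^0(X, T_X)=0$ by (1), forcing dimension zero; Cartier's theorem in characteristic zero then makes it smooth, and finite type plus zero-dimensional smooth yields finite étale.

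For (3), Hanamura's theorem gives representability of $\underline{\Bir}(X)$ by a locally of finite type $k$-group scheme, applicable because $X$ is terminal. Namikawa's inequality $\codim X^{\sing}\geq 4$ for terminal primitive symplectic varieties ensures that the Lie algebra of $\underline{\Bir}(X)$ embeds into the global vector fields on a big open of $X_{\reg}$, hence into $\rH^0(X, T_X)=0$ by reflexivity and (1); this gives $\dim\underline{\Bir}(X)=0$. For countability of $\Bir(X_L)$, first note that since $k$ is algebraically closed and $\underline{\Bir}(X)$ is étale of dimension zero in characteristic zero, the set $\underline{\Bir}(X)(L)$ is independent of the extension $L/k$ and equals the underlying set of geometric points. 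My plan to bound this set is to construct a homomorphism $\rho \colon \Bir(X) \to \rO(\rH^2_{\et}(X, \QQ_\ell))$ by pullback along a common big open (available since birational maps between terminal primitive symplectic varieties are isomorphisms in codimension two), observe that $\rho$ lands in the countable discrete subgroup preserving the integral lattice and the Beauville--Bogomolov form, and then bound $\ker\rho$: an element there preserves an ample class on a suitable $\QQ$-factorial birational model, so it lies in an $\underline{\Aut}(X', \cL)$, which is finite by (2). Making this kernel argument work uniformly across birational models—effectively controlling the ambiguity in the choice of ample class—is the main obstacle, and I expect to invoke the relative Kawamata--Morrison cone conjecture alluded to later in the paper to close this gap.
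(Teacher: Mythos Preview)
Your arguments for (1) and (2) are correct and essentially match the paper's: the paper simply cites \cite[Lemma~4.6]{BL22} for (1), whose proof is precisely the one you spelled out, and for (2) it embeds $\underline{\Aut}(X,\cL^{\otimes m})$ into $\PGL_N$ via a very ample power of $\cL$ rather than going through the Hilbert scheme, but the substance is the same.

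For (3), representability and zero-dimensionality are also fine. The paper obtains $\dim\underline{\Bir}(X)=0$ by directly citing Hanamura's Corollary~(4.8), which expresses the dimension in terms of $h^1(X,\cO_X)=0$; your tangent-space route via $\rH^0(X,T_X)$ works too, but only once you know from Hanamura that $\underline{\Bir}(X)^\circ=\underline{\Aut}(X)^\circ$ for terminal varieties, so you are implicitly using his structural results anyway.

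Where your proposal genuinely diverges, and becomes problematic, is the countability argument. You propose to construct a representation $\rho\colon \Bir(X)\to \rO(\rH^2)$, observe that its image is countable, and then control $\ker\rho$ by appealing to the cone conjecture---a step you yourself flag as incomplete. This is a substantial detour, and unnecessary. The paper's argument is one line: $\underline{\Bir}(X)$ is an open subscheme of $\Hilb(X\times X)$, which has only countably many connected components; since $\underline{\Bir}(X)$ is zero-dimensional and smooth over the algebraically closed field $k$, one has $\Bir(X_L)=\pi_0(\underline{\Bir}(X))$ for every extension $L\supset k$, and this is countable. No representation on $\rH^2$ and no cone conjecture are needed; replace your final paragraph with this observation and the proof is complete.
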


\begin{proof}

The first statement follows directly from \cite[Lemma 4.6]{BL22}. 

For the second statement, choose $m$ such that $\cL^{\otimes m}$ is very ample and observe that the embedding $\varphi_{|\cL^{\otimes m}|} \colon X \hookrightarrow \PP^N$ gives rise to a closed immersion of group schemes $$\underline{\Aut}(X,\cL^{\otimes m}) \hookrightarrow \PGL_N.$$ 
Therefore, $\underline{\Aut}(X,\cL) \hookrightarrow \underline{\Aut}(X,\cL^{\otimes m})$ is a smooth linear algebraic group over $k$. Since $\dim \underline{\Aut}(X,\cL) = \dim \rH^0(X, T_X) = 0$, and $\operatorname{char}(k)=0$, the group scheme $\underline{\Aut}(X,\cL)$ must be finite and \'{e}tale over $k$.

In (3), the representability of $\underline{\Bir}(X)$ is given by \cite[Theorem (3.3)]{Hanamura-Bir}. Moreover, we can see $\dim \underline{\Bir}(X) = \rH^1(X,\cO_X) =0$ by Corollary (4.8) in loc.cit.. The group scheme $\Bir(X)$ is an open subscheme of the Hilbert scheme $\Hilb(X \times X)$, which has countably many connected components. Then we can see $\Bir(X_L) = \pi_0(\underline{\Bir}(X))$ is a countable set.
\end{proof}

\subsection{Locally trivial families of primitive symplectic varieties}

\begin{definition}
\label{def:hyperkahlerfamily}
Let $B$ be a complex variety, and $\mathcal{X}$ a complex algebraic space. 
A proper flat holomorphic morphism $\pi \colon \cX \to B$ is a \emph{family of primitive symplectic varieties over $\CC$} if all geometric fibers are primitive symplectic varieties.\\
Similarly, over an algebraically closed field $k$ of characteristic $0$, we define a family of primitive symplectic varieties as a proper flat morphism from a $k$-algebraic space to a $k$-variety with all geometric fibers primitive symplectic varieties over $k$.
\end{definition}

\begin{definition}\label{def:LocallyTrivial}
A family of primitive symplectic varieties $\pi \colon \mathcal{X} \to B$ over $k$ is called \emph{locally trivial} if for any closed point $x \in \cX$, there is an isomorphism of $\cO_B$-algebras
\begin{equation}\label{eq:etaleLocallyTrivial}
    \cO_{\cX,x}^{sh} \cong \cO_{B,\pi(x)}^{sh} \otimes_{k} \cO_{\cX_{\pi(x)},x}^{sh},
\end{equation}
where $(-)^{sh}$ denotes the strictly Henselization at the given point.
\end{definition}

Recall that a holomorphic map between complex analytic spaces $\pi \colon \mathcal{X} \to B$ is called \emph{locally trivial} (see \cite{FK87}) if for any point $t \in B$ and any point $p \in \mathcal{X}_t$, there exists an analytic open neighborhood $p \in \fU \subseteq \mathcal{X}$ {such that
$t = \pi(p) \in V \coloneqq \pi(\fU) \subseteq B$ is an analytic open neighborhood and
there is a biholomorphism  $\fU \cong V \times (\fU \cap \mathcal{X}_t)$ commuting with the projections to $V$.}

It is useful to observe that the local triviality of a family can be checked both formal locally or in the analytic category when $k = \CC$.
\begin{lemma}\label{lem:locallytrivial1}Let $\pi \colon \cX \to B$ be a family of primitive symplectic varieties over $\CC$. The following  conditions are equivalent:
\begin{enumerate}
\item The family $\pi$ is locally trivial in the sense of \Cref{def:LocallyTrivial}.
    \item The analytification $\pi^{\an} \colon \mathcal{X}^{\an} \to B^{\an}$ is locally trivial as a map between complex analytic spaces.
    \item for any $x \in \cX^{\an}$, there is an isomorphism of $\cO_B$-algebras
\begin{equation}\label{eq:formalLocalTrivial}
\widehat{\cO}_{\mathcal{X}^{\an}, x} \simeq  \widehat{\cO}_{B^{\an},\pi(x)} \otimes_{\CC} \widehat{\cO}_{\cX_{\pi(x)}^{\an}, x},
\end{equation}
where   $\widehat{(-)}$ denotes the formal completion at the given point.
\end{enumerate}
\end{lemma}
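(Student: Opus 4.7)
The plan is to establish $(1) \Leftrightarrow (3)$ via the algebraic Artin approximation theorem, and $(2) \Leftrightarrow (3)$ via the analogous analytic approximation theorem of Artin (\emph{Solutions of analytic equations}).

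For $(1) \Rightarrow (3)$: I would complete both sides of the strictly Henselian isomorphism \eqref{eq:etaleLocallyTrivial} at the maximal ideal corresponding to $x$. Since completion of an essentially finite type local ring agrees with completion of its strict Henselization, and since in the complex-analytic setting we have $\widehat{\cO}_{\cX, x} \cong \widehat{\cO}_{\cX^{\an}, x}$ (and likewise for $B$ and for the fiber $\cX_{\pi(x)}$), the desired isomorphism \eqref{eq:formalLocalTrivial} drops out after interpreting the tensor product as a completed tensor product. For $(3) \Rightarrow (1)$, I would consider the Isom functor
\[
R \mapsto \mathrm{Isom}_{R\text{-alg}}\bigl(\cO_{\cX,x}^{sh} \otimes_{\cO_{B,\pi(x)}^{sh}} R,\ (\cO_{B,\pi(x)}^{sh} \otimes_{k} \cO_{\cX_{\pi(x)},x}^{sh}) \otimes_{\cO_{B,\pi(x)}^{sh}} R\bigr)
\]
on $\cO_{B,\pi(x)}^{sh}$-algebras, which is a functor of Artin type in the sense of \cite{Artinapprox}. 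The formal isomorphism \eqref{eq:formalLocalTrivial} furnishes a point of this functor valued in $\widehat{\cO}_{B,\pi(x)}$, and Artin approximation produces an actual $\cO_{B,\pi(x)}^{sh}$-valued point, yielding \eqref{eq:etaleLocallyTrivial}.

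For $(2) \Rightarrow (3)$: a biholomorphism $\fU \cong V \times (\fU \cap \cX_{\pi(x)}^{\an})$ over $V$ immediately induces the required $\cO_B$-algebra isomorphism on formal completions at $x$. For $(3) \Rightarrow (2)$: I would apply Artin's approximation theorem for analytic equations to the analytic Isom sheaf between germs of $\pi^{\an}$ at $x$ and the trivial product family $\cX_{\pi(x)}^{\an} \times B^{\an}$ near $(x,\pi(x))$. The formal $B^{\an}$-isomorphism from \eqref{eq:formalLocalTrivial} gives a formal solution, which is approximated by an honest convergent (analytic) solution on some neighborhood; this analytic $V$-isomorphism is exactly an analytic local triviality of $\pi^{\an}$ at $x$.

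The main obstacle I expect is bookkeeping the \emph{relative} structure: one must verify at every stage that the approximating isomorphism is compatible with the projection to $B$ (resp.\ $B^{\an}$), which forces the Isom functors above to be defined over $\cO_{B,\pi(x)}^{sh}$ (resp.\ the base analytic germ) rather than over $k$. A secondary (routine) issue is that $\cX$ is an algebraic space and not a scheme; however, local questions around a closed point reduce to an étale chart by a scheme, so Artin approximation applies there without change. Once these compatibilities are in place, the three conditions are seen to be equivalent.
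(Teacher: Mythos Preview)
Your proposal is correct and follows essentially the same approach as the paper, which simply cites Artin's analytic approximation theorem \cite[Corollary (1.6)]{Artin68} for $(3)\Rightarrow(2)$ and Artin's algebraic approximation theorem \cite[Corollary 2.6]{Artin69} for $(1)\Leftrightarrow(3)$, with $(2)\Rightarrow(3)$ immediate by completion. Your unpacking via Isom functors and your remarks on relative compatibility and the algebraic-space issue are exactly how one makes these citations precise.
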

\begin{proof}
The implication $(2) \Rightarrow (3)$ follows directly by taking formal completion. The implication $(3) \Rightarrow (2)$ follows from \cite[Corollary (1.6)]{Artin68}.
 Moreover, Artin's approximation theorem (\cite[Corollary 2.6]{Artin69}) implies that (1) and (3) are equivalent.
\end{proof}

\begin{remark}
    In \cite{BGL22}, a locally trivial algebraic family means locally trivial in the Zariski topology, which is more restrictive than our definition here. We thank Prof.~Christian Lehn for clarifying this to us.
\end{remark}

The following fact allows us to relate the study of a locally trivial family of primitive symplectic varieties to that of a smooth family of symplectic varieties.
\begin{proposition}\label{prop:SimultaneousResolution}
Let $B$ be a complex variety,
and $\pi \colon \mathcal{X} \rightarrow B$ a locally trivial family of primitive symplectic varieties.
    Then there exists a simultaneous resolution of singularities
    \[
    \begin{tikzcd}
        \widetilde{\mathcal{X}} \ar[rr,"g"] \ar[rd] & & \mathcal{X} \ar[ld, "\pi"] \\
        & B
    \end{tikzcd}
    \]
    such that $g$ restricts to an isomorphism on $g^{-1}({\mathcal{X}}_{\reg})$ to $\mathcal{X}_{\reg}$.
\end{proposition}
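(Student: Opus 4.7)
The plan is to apply a \emph{functorial} resolution of singularities to the total space $\mathcal{X}$ and use the local triviality of $\pi$ to force the resolution to respect the fibration. In a sentence: once the resolution is canonical enough to commute with smooth base change, the local product structure guaranteed by local triviality automatically upgrades it to a simultaneous resolution.

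First, I would invoke Hironaka's theorem in its canonical, functorial form (due to Bierstone--Milman, Villamayor, and W{\l}odarczyk, and known to extend to complex algebraic spaces by étale descent): there exists a proper birational morphism $g \colon \widetilde{\mathcal{X}} \to \mathcal{X}$ with $\widetilde{\mathcal{X}}$ smooth, which is an isomorphism over $\mathcal{X}_{\reg}$, and which is functorial with respect to smooth morphisms in the sense that for any smooth morphism $h \colon Y \to Z$ there is a canonical identification $\widetilde{Y} \cong \widetilde{Z} \times_Z Y$.

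Next, I would analyze $g$ in the local charts provided by the local triviality assumption. Fix a point $x \in \mathcal{X}$ and put $b = \pi(x)$. By \Cref{def:LocallyTrivial} (equivalently, in the analytic category, by \Cref{lem:locallytrivial1}), there is a strictly Henselian (resp.\ analytic) neighborhood $\mathfrak{U}$ of $x$ together with an isomorphism $\mathfrak{U} \cong V \times W$ compatible with $\pi$, where $V$ is a neighborhood of $b$ in $B$ and $W \subseteq \mathcal{X}_b$ is a neighborhood of $x$ in the fiber. The second projection $V \times W \to W$ is smooth, so functoriality of the canonical resolution yields
\[
\widetilde{\mathfrak{U}} \cong V \times \widetilde{W},
\]
where $\widetilde{W} \to W$ is the canonical resolution of the open $W$ inside $\mathcal{X}_b$. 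Consequently, the composite $\widetilde{\mathcal{X}} \to \mathcal{X} \xrightarrow{\pi} B$ is, in these local charts, the trivial projection from $V \times \widetilde{W}$ onto $V$. Thus $\widetilde{\mathcal{X}} \to B$ is a smooth morphism whose fiber over any $b$ is exactly the canonical resolution $\widetilde{\mathcal{X}_b} \to \mathcal{X}_b$, which means $g$ is a \emph{simultaneous} resolution. The fact that $g$ restricts to an isomorphism on $g^{-1}(\mathcal{X}_{\reg})$ is built into the definition of a resolution of singularities.

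The main (and essentially only) technical point is verifying that a canonical functorial resolution is available in a category compatible with the language in which local triviality is phrased. Since local triviality is given either strictly Henselian-locally (\Cref{def:LocallyTrivial}) or, equivalently, analytically (\Cref{lem:locallytrivial1}), and since functoriality under smooth morphisms permits the canonical resolution to be glued from étale (or analytic) charts, this coherence is automatic. No further obstacle arises, and the symplectic hypothesis on the fibers is never used beyond ensuring that $\mathcal{X}$ is a normal complex algebraic space of characteristic zero to which Hironaka's theorem applies.
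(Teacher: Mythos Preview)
Your approach is in the same spirit as the paper's---both exploit the canonical (Bierstone--Milman) resolution together with the local product structure---but there is a gap when $B$ is allowed to be singular. You resolve the \emph{total space} $\mathcal{X}$ and then invoke functoriality of the canonical resolution under the projection $V\times W\to W$ to conclude $\widetilde{\mathfrak U}\cong V\times\widetilde W$. That projection is smooth only if $V$ is smooth over $\CC$; for singular $B$ a neighborhood $V$ of a singular point is not smooth, and the canonical resolution of $V\times W$ is not $V\times\widetilde W$. Concretely, over a singular point $b\in B$ and a smooth point $x\in\mathcal{X}_b$, the total space $\mathcal{X}$ is singular at $x$, so your $g$ modifies $\mathcal{X}$ there and $\widetilde{\mathcal{X}}\to B$ fails to restrict to a resolution of $\mathcal{X}_b$.

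The paper sidesteps this by running the algorithm on each \emph{fiber} $\mathcal{X}_b$ rather than on $\mathcal{X}$: at every stage the blow-up center in $\mathcal{X}_b$ is the top stratum of the Bierstone--Milman invariant $\inv_b$, which depends only on the complete local ring $\widehat{\cO}_{\mathcal{X}_b,x}$. Local triviality makes these complete local rings constant along the family, so the fiberwise centers glue to a closed algebraic subspace of $\mathcal{X}$, and successive blow-ups produce the simultaneous resolution irrespective of the singularities of $B$. Your argument is correct verbatim once $B$ is assumed smooth (which in fact covers every application made later in the paper), but the proposition as stated does not impose that hypothesis.
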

\begin{proof}
This is essentially addressed in \cite[Lemma 4.9]{BL22}. Note that in loc.~cit., only $g^{\an}$ is constructed. However, the resolution $g^{\an}$ arises from an algorithmic resolution of singularities through successive blow-ups, guided by the Bierstone–Milman invariant $\inv_{b} \colon \mathcal{X}_b \to \Gamma$ at each step. Consequently, the successive blow-up loci can be globally glued as algebraic subspaces of $\mathcal{X}$ since $\inv_b(x)$ is determined by the complete local ring $\widehat{\cO}_{\cX_b,x}$. Thus, we obtain the simultaneous resolution $\widetilde{\mathcal{X}} \to B$ in the category of algebraic spaces over $B$.
\end{proof}

It is worth noting that a locally trivial family is locally acyclic in the étale topology.
\begin{lemma}
\label{lem:localconstantlocalTrivial}
If $\pi$ is locally trivial (in the sense of \Cref{def:LocallyTrivial}), then the higher direct image $R^i\pi_*^{\et} \underline{\ZZ/n}$ is locally constant in the étale topology of $B$ for any integer $i$ and positive integer $n$.
\end{lemma}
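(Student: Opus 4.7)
The plan is to show that $\pi$ is \emph{locally acyclic} with respect to the constant torsion sheaf $\underline{\ZZ/n}$, and then invoke the standard result (SGA~4$\tfrac{1}{2}$, Th.~Finitude) that a proper, locally acyclic morphism has locally constant higher direct images of constant torsion sheaves in the étale topology.

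First, I would observe that by the properness of $\pi$ combined with proper base change, $R^i\pi_*^{\et}\underline{\ZZ/n}$ is already a constructible sheaf on $B$, so local constancy reduces to verifying that every specialization map between geometric stalks is an isomorphism. To establish local acyclicity, I would take an arbitrary geometric point $\bar{x}$ of $\cX$, set $\bar{s} := \pi(\bar{x})$, $S := \Spec(\cO_{B,\bar{s}}^{sh})$, and let $\bar{t}$ be a geometric point of $S$; the goal is to show that the Milnor fiber $\cX_{(\bar{x})} \times_S \bar{t}$ has étale cohomology equal to $\ZZ/n$ in degree $0$ and vanishing in positive degrees. Applying the local triviality hypothesis \eqref{eq:etaleLocallyTrivial}, one obtains an $S$-isomorphism
\[
\cX_{(\bar{x})} \;\cong\; S \times_k Z, \qquad Z := \Spec\!\left(\cO_{\cX_{\bar{s}},\bar{x}}^{sh}\right),
\]
whence the Milnor fiber is identified with $\bar{t} \times_k Z$.

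Since $Z$ is a strictly Henselian local $k$-scheme, its étale cohomology with $\underline{\ZZ/n}$-coefficients is $\ZZ/n$ in degree zero and vanishes in higher degrees. To transfer this triviality to $\bar{t}\times_k Z$, I would write $Z = \lim_\alpha U_\alpha$ as a cofiltered limit of étale neighborhoods of $\bar{x}$ in $\cX_{\bar{s}}$, so that $\bar{t}\times_k Z = \lim_\alpha(\bar{t}\times_k U_\alpha)$, and then combine the continuity of étale cohomology along cofiltered inverse limits of qcqs schemes with affine transition maps with the invariance of étale cohomology under extension of algebraically closed base fields for schemes of finite type (SGA~4, Exp.~XVI), to conclude $H^i_{\et}(\bar{t}\times_k Z, \ZZ/n) = H^i_{\et}(Z, \ZZ/n)$. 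This yields local acyclicity and hence the lemma. The hardest step is expected to be this final continuity and base-change manipulation, which is standard but requires care because $Z$ is not of finite type over $k$.
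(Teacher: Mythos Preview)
Your proposal is correct and follows essentially the same route as the paper: reduce local constancy to local acyclicity via proper base change, use the local triviality hypothesis to identify the Milnor fiber with $\bar{t}\times_k Z$ for $Z=\Spec(\cO_{\cX_{\bar s},\bar x}^{sh})$, and then invoke invariance of \'etale cohomology under extension of algebraically closed base fields. The only cosmetic difference is that the paper cites geometric invariance directly (Stacks~0F0B), whereas you first pass to a cofiltered limit of finite-type neighbourhoods before applying the finite-type statement; both justifications are standard and yield the same conclusion.
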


\begin{proof}
Since $R^i\pi_*^{\et}\underline{\ZZ/n}$ are constructible by the proper base change theorem, the statement is equivalent to say that the specialization map
\[
sp_{\bar{s},\bar{t}} \colon (R^i\pi_*^{\et}\underline{\ZZ/n})_{\bar{s}} \to (R^i\pi_*^{\et}\underline{\ZZ/n})_{\bar{t}}
\]
is an isomorphism for any specialization $\bar{t} \rightsquigarrow \bar{s}$ in $B$. According to \cite[Tag 0GJW]{stacks-project}, it is sufficient to show $\pi$ is locally acyclic, i.e., the pull-back
\begin{equation}\label{eq:VanishingCycle}
R\Gamma\left(\Spec(\cO_{\cX,\bar{x}}^{sh}), \underline{\ZZ/n}\right) \xrightarrow{} R\Gamma\left(\Spec(\cO_{\cX,\bar{x}}^{sh})\times_{\cO_{B,\pi(\bar{x})}^{sh}} \bar{t}, \underline{\ZZ/n} \right)
\end{equation}
is an isomorphism for any geometric point $\bar{x}$ of $\cX$, and geometric point $\bar{t}$ of $\cO_{B,f(\bar{x})}^{sh}$. Since $f$ is locally trivial, there is an isomorphism of $\cO_B$-algebras
\[
\cO_{\cX,\bar{x}}^{sh} \cong  \cO_{\cX_{\pi(\bar{x})},\bar{x}}^{sh} \otimes_{k} \cO_{B,\pi(\bar{x})}^{sh},
\]
where $(-)^{sh}$ is the strictly Henselization.
Then we have
\begin{equation}\label{eq:Kunneth}
\begin{aligned}
R\Gamma\left(\Spec(\cO_{\cX,\bar{x}}^{sh}), \underline{\ZZ/n}\right) &\cong R\Gamma\left(\Spec(\cO_{\cX_{\pi(\bar{x})},\bar{x}}^{sh} \otimes_{\CC} \cO_{B,\pi(\bar{x})}^{sh}), \underline{\ZZ/n} \right) \\ &\cong R\Gamma\left(\Spec(\cO_{\cX_{\pi(\bar{x})},\bar{x}}^{sh}),\underline{\ZZ/n}\right),
\end{aligned}
\end{equation}
where the second isomorphism is given by the Künneth formula. For any geometric point $\bar{t}$ of $\Spec(\cO_{B,f(\bar{x})}^{sh})$, the fiber product 
\[
\Spec(\cO_{\cX,\bar{x}}^{sh})\times_{\cO_{B,f(\bar{x})}^{sh}} \bar{t} \cong \Spec(\cO_{\cX_{f(\bar{x})},\bar{x}}^{sh}) \times_{\Spec(\CC)}\bar{t}.
\]
is the base extension of $\Spec(\cO_{\cX_{f(\bar{x})},\bar{x}}^{sh})$ along $k(\bar{t})/k$. Hence
\begin{equation}\label{eq:FieldExtension}
R\Gamma\left(\Spec(\cO_{\cX_{f(\bar{x})},\bar{x}}^{sh}),\underline{\ZZ/n}\right) \xrightarrow{\sim}R\Gamma\left(\Spec(\cO_{\cX,\bar{x}}^{sh})\times_{\cO_{B,f(\bar{x})}^{sh}} \bar{t}, \underline{\ZZ/n}) \right)
\end{equation}
by the geometric invariance (smooth base change) of étale cohomology (see \cite[Tag 0F0B]{stacks-project}). By the construction, the composition of \eqref{eq:Kunneth} and \eqref{eq:FieldExtension} is the pull-back \eqref{eq:VanishingCycle}. This implies $f$ is locally acyclic in the étale topology. 
\end{proof}

\subsection{{$\QQ$-factoriality in locally trivial families}}
In analyzing the birational geometry of primitive symplectic varieties, it is common to assume they are $\QQ$-factorial. However, some basic geometric operations can disrupt $\QQ$-factoriality. This section outlines essential facts for subsequent discussions. As before, $k$ is denoted for an algebraically closed field of characteristic $0$.

First, we record a technical lemma, which allows us to compare the geometric generic fiber with a very general fiber in a given family.
\begin{lemma}
\label{lem:generalpointPicard}
Let $B$ be an integral variety over $k$, and $\mathcal{X} \rightarrow B$ be a flat proper algebraic space.
Let $\Omega$ be a universal domain containing $k$, {i.e., an algebraically closed field with infinite transcendental degree over its prime field.}
Let $\eta$ be the generic point of $B$.
There exist countably many non-empty open subschemes $\Set{U_i \subset B}_{i\in I}$ such that for any $\Omega$-point
\[
b \in \bigcap_{i \in I} U_i (\Omega) \subset B (\Omega)
\]
we have an (abstract) isomorphism $\cX_{\overline{\eta_{{\Omega}}}} \cong \mathcal{X}_b$ of schemes, (which induces an isomorphism $\overline{\eta_{\Omega}} \simeq \Omega$), 
where $\overline{\eta_{\Omega}}$ is the geometric generic point of $B_{\Omega}$. In particular, there is an isomorphism
\[
\Pic(\mathcal{X}_{\overline{\eta_{{ \Omega}}}}) \simeq \Pic  (\mathcal{X}_{b}).
\]
\end{lemma}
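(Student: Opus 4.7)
The plan is to reduce to a countable model of $B$ and $\mathcal{X}$ and then identify $\Omega$ with $\overline{\eta_{\Omega}}$ via the generalized Steinitz isomorphism theorem for algebraically closed fields. First, I would choose a countable algebraically closed subfield $k_0 \subseteq k$ over which $B$ and $\mathcal{X} \to B$ admit models $B_0$ and $\mathcal{X}_0 \to B_0$, whose base change to $k$ recovers the given family; this is possible by the standard spreading-out argument for flat proper algebraic spaces of finite presentation. Let $\eta_0 \in B_0$ denote the generic point.

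Since $B_0$ is of finite type over the countable algebraically closed field $k_0$, its underlying topological space $|B_0|$ is countable, so there are only countably many proper closed integral subschemes $Z_i \subsetneq B_0$ (namely the closures of the non-generic points). Setting $U_i \coloneqq B \setminus (Z_i \times_{k_0} k) \subseteq B$ gives a countable collection of non-empty open subschemes of $B$. For any $b \in \bigcap_i U_i(\Omega)$, the image of $b$ in $B_0$ avoids every proper closed subvariety, so it must equal $\eta_0$; hence $b$ induces a $k_0$-embedding $\iota_b \colon k_0(B_0) \hookrightarrow \Omega$ and an isomorphism $\mathcal{X}_b \cong \mathcal{X}_{0,\eta_0} \times_{k_0(B_0),\iota_b} \Omega$. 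Analogously, $\mathcal{X}_{\overline{\eta_{\Omega}}} \cong \mathcal{X}_{0,\eta_0} \times_{k_0(B_0)} \overline{\eta_{\Omega}}$ using the canonical embedding $k_0(B_0) \hookrightarrow \overline{\eta_{\Omega}}$.

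It then remains to compare the two base-change fields. Both $\Omega$ and $\overline{\eta_{\Omega}}$ are algebraically closed of characteristic zero and have cardinality $|\Omega|$. Their transcendence degrees over $k_0(B_0)$, taken via the respective embeddings, each differ from $\operatorname{trdeg}(\Omega/k_0)$ by at most $\dim B_0$; since $\Omega$ is a universal domain, $\operatorname{trdeg}(\Omega/k_0)$ is infinite, so the two degrees coincide as cardinals. The generalized Steinitz theorem then yields an isomorphism of $k_0(B_0)$-algebras $\Omega \xrightarrow{\sim} \overline{\eta_{\Omega}}$; base-changing $\mathcal{X}_{0,\eta_0}$ along this isomorphism produces the desired abstract scheme isomorphism $\mathcal{X}_b \cong \mathcal{X}_{\overline{\eta_{\Omega}}}$, together with the claimed identification of residue fields and the consequent isomorphism on Picard groups. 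The main subtle point in the argument is arranging the two a priori different embeddings of $k_0(B_0)$ so that Steinitz applies; this is handled by the finite-shift observation above. The initial spreading-out of an algebraic space together with its base over a countable subfield is a routine limit argument.
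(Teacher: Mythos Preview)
Your argument is correct and is precisely the approach the paper has in mind: the paper's proof just points to \cite[Lemma 2.1]{Vial-CyclesAndFibration} together with spreading out for algebraic spaces, and what you wrote is exactly a reconstruction of Vial's proof (descend to a countable model, use countability of $|B_0|$ to get countably many bad closed subsets, then invoke Steinitz on the two algebraically closed extensions of $k_0(B_0)$). The transcendence-degree bookkeeping you sketch is the only point requiring care, and your ``finite-shift'' observation that $\operatorname{trdeg}(\Omega/k_0)$ is an infinite cardinal handles it.
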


\begin{proof}
 Set $B_{\Omega} = B \times_k \Omega$.
By the spreading out argument for morphisms of finite type, between algebraic spaces of finite type over $\Omega$, which follows from \cite[06G4, 07SK, 0CPC, 0CPE]{stacks-project}, there exist a finitely generated subfield $F \subset \Omega$ and a variety $T$ of finite type over $F$, satisfying the following.
\begin{enumerate}
\item
We have a $\Omega$-isomorphism $T \times_F \Omega \simeq B_\Omega$.
    \item
There exists a proper algebraic space $\mathcal{X}'$ over $T$ such that $\mathcal{X}' \times_F \Omega \simeq \mathcal{X}_{\Omega}$ compatible with the isomorphism in (1).
\end{enumerate}
With the same argument as in the proof of \cite[Lemma 2.1]{Vial-CyclesAndFibration}, we can find the required isomorphism
\(\cX_{\overline{\eta}_{\Omega}} \cong \cX_b\)
for some $b \in B(\Omega)$ inside the intersection of a countable set of non-empty open subschemes $\{ U_i  \subset B \}_{i \in I}$.
\end{proof}

\begin{definition}
Let $\mathcal{X} \rightarrow B$ be a locally trivial family of primitive symplectic varieties over an integral variety $B$ over $k$.
We say $\mathcal{X}$ is a \emph{locally trivial family of $\QQ$-factorial primitive symplectic varieties} when $\mathcal{X}_{b}$ is $\QQ$-factorial primitive symplectic variety for any closed point $b \in B$. 
\end{definition}

In general, the notion of $\QQ$-factorial is not stable under étale base change. Fortunately, it satisfies geometric invariance under mild singularity assumptions. The following statement was taught to the third author by Shou Yoshikawa. 

\begin{lemma}
\label{lem:Q-factorialunderbasechange}
Let $F/k$ be an extension of algebraically closed fields.
Let $X$ be a normal projective variety over $k$.
Then $X_F$ is a normal projective $\QQ$-factorial klt variety if and only if $X$ is $\QQ$-factorial and klt.
The same holds if we replace klt by terminal.
\end{lemma}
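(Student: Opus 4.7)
The plan is as follows. The morphism $X_F \to X$ is faithfully flat, and we use descent and ascent along it. Normality is preserved in both directions because $F/k$ is a geometrically regular extension: both fields being algebraically closed of characteristic zero, $F$ is a filtered colimit of smooth $k$-algebras. For the klt and terminal conditions, fix a log resolution $\pi \colon Y \to X$; the base change $\pi_F \colon Y_F \to X_F$ is again a log resolution (regularity and simple normal crossings are preserved under such a base change), and the discrepancies coincide because $K_{Y_F/X_F} = (K_{Y/X})_F$. Thus $X$ is klt (resp.~terminal) if and only if $X_F$ is, independently of $\QQ$-factoriality.

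The easy direction of $\QQ$-factoriality is faithfully flat descent: if $X_F$ is $\QQ$-factorial and $D$ is a Weil divisor on $X$, then $\cO_{X_F}(mD_F) = \cO_X(mD) \otimes_k F$ is invertible for some $m > 0$, so $\cO_X(mD)$ is invertible on $X$.

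The converse, that $X$ klt and $\QQ$-factorial implies $X_F$ is $\QQ$-factorial, is the main content, and proceeds by a spreading-out argument. Given a Weil divisor $D_F$ on $X_F$, its finitely many codimension-one irreducible components are each defined over some finitely generated subfield of $F$, so $D_F$ descends to a Weil divisor $D_K$ on $X_K$ for a finitely generated extension $k \subset K \subset F$. Writing $K = \Frac(R)$ for a smooth integral $k$-algebra $R$ and setting $S \coloneqq \Spec R$, we spread $D_K$ out (shrinking $S$ if necessary) to a relative Weil divisor $\mathcal{D}$ on $\mathcal{X}_S \coloneqq X \times_k S$, flat over $S$. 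Since $\mathcal{X}_S \to S$ is the trivial family, each fiber $\mathcal{X}_s$ is isomorphic to $X$, so every fibrewise restriction $\mathcal{D}_s$ is $\QQ$-Cartier by hypothesis. A constructibility argument on the Noetherian base $S$ then yields some $m > 0$ making $m\mathcal{D}_s$ Cartier on every closed $k$-fiber. Finally, the deformation theory of $\QQ$-Cartier Weil divisors in flat families of klt varieties implies, after possibly shrinking $S$, that the reflexive sheaf $\cO_{\mathcal{X}_S}(m\mathcal{D})$ is globally invertible on $\mathcal{X}_S$. Base-changing along the morphism $\Spec F \to S$ induced by $K \hookrightarrow F$ then gives that $mD_F$ is Cartier on $X_F$.

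The main obstacle is precisely this last step: upgrading fibrewise Cartierness to Cartierness of the family. This is where the klt hypothesis genuinely enters, through the good behavior of reflexive rank-one sheaves on klt varieties in flat families, and it is the only step that fails for general normal projective varieties.
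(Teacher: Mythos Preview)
Your treatment of normality, the klt/terminal conditions, and faithfully-flat descent of $\QQ$-factoriality is correct. The substantive content is the ascent of $\QQ$-factoriality from $X$ to $X_F$, and here your step~5 has a genuine gap. You invoke ``the deformation theory of $\QQ$-Cartier Weil divisors in flat families of klt varieties'' to pass from fibrewise Cartierness of $m\mathcal{D}_s$ to invertibility of $\cO_{\mathcal{X}_S}(m\mathcal{D})$ on the total space $\mathcal{X}_S = X \times_k S$, but this is not a standard result you can cite. Reflexive rank-one sheaves need not commute with restriction to fibers, so knowing each $m\mathcal{D}_s$ is Cartier does not formally imply $m\mathcal{D}$ is. What you would actually need is something like $\Cl(\cO_{X,x}^{sh}) \cong \Cl\bigl(\cO_{X,x}^{sh}[[t_1,\dots,t_d]]\bigr)$ at each closed point $(x,s)$; this is plausible for klt singularities but requires real work (of roughly the same order as the lemma itself), and you have not supplied it. You correctly flag this as ``the only step that fails for general normal projective varieties,'' but you have not shown that it \emph{succeeds} for klt ones.

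The paper's proof avoids this entirely by running the MMP. From a projective log resolution $Y \to X$ one obtains, by the proof of \cite[Theorem~22.1]{Lyu-Murayama}, a factorization $Y = Y_0 \dashrightarrow \cdots \dashrightarrow Y_n = X$ into divisorial contractions and flips of klt pairs. Since $k$ and $F$ are both algebraically closed, each step base-changes to a step of the same type over $F$ with relative Picard rank one, and such steps preserve $\QQ$-factoriality (\cite[Lemmas~19.3, 20.3]{Lyu-Murayama}). As $Y_F$ is smooth, $X_F = (Y_n)_F$ is $\QQ$-factorial. This trades your unproved deformation-theoretic claim for the (available) existence and base-change behaviour of an MMP terminating at $X$.
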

\begin{proof}
Let us only show that if $X$ is $\QQ$-factorial and klt then $X_F$ is $\QQ$-factorial. The other assertions are easy and left to the reader. 
Let $Y$ be a projective log resolution of $X$.
Then, as $X$ is $\QQ$-factorial, we have a sequence of birational maps over $k$:
\[
Y = Y_{0} \overset{f_0}\dashrightarrow Y_{1} \overset{f_1}\dashrightarrow \cdots \overset{f_{n-1}}\dashrightarrow Y_{n} =X \,,
\]
by the proof of \cite[Theorem 22.1]{Lyu-Murayama}.
Here, each dotted arrow represents either a divisorial contraction or a flip (for some klt pair) over $k$.
We claim that all $Y_{i,F}$ are $\QQ$-factorial. In particular, $X_{n,F} = X_F$ is $\QQ$-factorial. The claim can be proved by induction. 
Since $Y_{0,F}$ is regular, $Y_{0,F}$ is clearly $\QQ$-factorial.
Assume that $Y_{i,F}$ is $\QQ$-factorial.
\begin{enumerate}
    \item  If $f_i$ is a divisorial contraction, then the base extension $f_{i,F} \colon Y_{i,F} \rightarrow Y_{i+1,F}$ is also a divisorial contraction since it is still of relative Picard number $1$ as $k$ and $F$ are algebraically closed. Thus $Y_{i+1,F}$ is also $\QQ$-factorial (see \cite[Lemma 19.3]{Lyu-Murayama} for example).
    \item If $f_i$ is a flip, i.e., it factors as
    \[
    \begin{tikzcd}[row sep =small]
        Y_{i} \ar[rr,dashed,"f_i"] \ar[rd,"g_i"'] & &Y_{i+1} \ar[ld,"g^+_i"]\\
        & Z &
    \end{tikzcd}
    \]
    where $g_i$ and $g_i^+$ are small contractions. After the base extension to $F$, $g_{i,F}$ and $g_{i,F}^+$ are still of relative Picard number 1 as $k$ is algebraically closed.
    Moreover, clearly $g_{i,F}$ and $g_{i,F}^+$ are small contractions.
    Therefore, $f_{i,F}$ is a filp, and $Y_{i+1,F}$ is $\QQ$-factorial (see \cite[Lemma 20.3]{Lyu-Murayama} for example). \endproof
\end{enumerate}

\end{proof}

Thanks to \Cref{lem:Q-factorialunderbasechange}, we can see the notion of locally trivial family of $\QQ$-factorial primitive varieties is well-behaved under a base-change.
\begin{proposition} \label{prop:loctrivQ-factterminal}
Let $\pi \colon \cX \to B$ be a locally trivial family of primitive symplectic varieties over $k$.  Let $0 \in B$ be a regular $k$-point. Suppose the fiber $X\coloneqq \mathcal{X}_0$ is a $\QQ$-factorial terminal primitive symplectic variety.
\begin{enumerate}
    \item 
    The geometric generic fiber $\mathcal{X}_{\overline{\eta}}$ is $\QQ$-factorial and terminal.
    \item \label{lem:loctrivQ-factterminal} The family $\pi$ is a locally trivial family of $\QQ$-factorial  primitive symplectic varieties if and only if any geometric fiber $\mathcal{X}_s$ is a $\QQ$-factorial  primitive symplectic variety.  Moreover, in any case, all geometric fibers of $\pi$ have terminal singularities.
\end{enumerate}
\end{proposition}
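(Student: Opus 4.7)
The plan is to handle the terminality and $\QQ$-factoriality claims separately, and to deduce part (2) from part (1). Terminality will follow quickly from Namikawa's codimension criterion combined with local triviality of the singular locus; $\QQ$-factoriality of the geometric generic fiber is more delicate and will rely on the variation of Hodge structure carried by the constant local system $R^2\pi_*\QQ$ (whose constancy is guaranteed by \Cref{lem:localconstantlocalTrivial}).

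For terminality of every geometric fiber, the key observation I would exploit is that \Cref{def:LocallyTrivial} forces $\cX^{\sing} \to B$ to be itself a locally trivial subfamily with fiber $\cX_s^{\sing}$ over $s$. Hence $\codim_{\cX_s}\cX_s^{\sing}$ is constant in $s$, and since it equals $\codim_X X^{\sing} \geq 4$ at the central fiber by the terminal hypothesis on $X$, Namikawa's criterion (terminal iff $\codim \geq 4$ for primitive symplectic varieties) gives terminality on every geometric fiber at once.

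For the $\QQ$-factoriality of $\cX_{\bar\eta}$ in (1), I would first apply \Cref{lem:Q-factorialunderbasechange} to reduce to proving the statement after base change to any algebraically closed extension, and then use \Cref{lem:generalpointPicard} to reduce the claim further to the $\QQ$-factoriality of some Zariski-very-general closed fiber $\cX_b$. For such $\cX_b$, excision in codimension $\geq 4$ combined with Lefschetz $(1,1)$ on the smooth locus identifies $\Cl(\cX_b)_\QQ$ with the rational $(1,1)$-part $\rH^{1,1}(\cX_b)_\QQ$ of the VHS, so the task reduces to the cohomological equality $\Pic(\cX_b)_\QQ = \rH^{1,1}(\cX_b)_\QQ$. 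At a very general $b$, $\rH^{1,1}(\cX_b)_\QQ$ equals the constant part of the VHS, and each such class is automatically $(1,1)$ at $0$, hence is represented by an honest line bundle $L$ on $X$ by $\QQ$-factoriality of $X$. The flatness of $c_1(L)$ under Gauss--Manin then kills the first-order obstruction $c_1(L)\cup \mathrm{KS} \in \rH^2(X, \cO_X)$ to deforming $L$ along $\pi$; iterating this obstruction analysis at all orders and invoking Artin approximation should produce an extension of $L$ to a line bundle on a Zariski neighborhood of $X$ in $\cX$, whose restriction to $\cX_b$ gives the desired inclusion $\rH^{1,1}(\cX_b)_\QQ \subseteq \Pic(\cX_b)_\QQ$ and hence $\QQ$-factoriality.

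For part (2), the ``if'' direction is immediate. For the converse, I would handle an arbitrary geometric fiber $\cX_{\bar s}$ by letting $Z \subseteq B$ be the closure of $\{s\}$ and applying part (1) to the restricted family $\pi_Z \colon \pi^{-1}(Z) \to Z$, which remains locally trivial, has $\QQ$-factorial closed fibers by hypothesis, and has $\cX_{\bar s}$ as its geometric generic fiber. The most delicate step of the whole plan is the line-bundle extension in the third paragraph: passing from the pointwise vanishing of the first-order obstruction to a genuine algebraic extension of $L$ requires iterating at all orders and then algebraizing, resting on specific features of primitive symplectic varieties (in particular, $\rH^2(X, \cO_X) = k$ and the Beauville--Bogomolov--Fujiki pairing on $\rH^2$) together with Artin approximation.
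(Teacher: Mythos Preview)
Your handling of terminality (constancy of the singular-locus codimension under local triviality, plus Namikawa's criterion) and of part (2) (restrict to the Zariski closure of $\{s\}$ and apply (1)) matches the paper. For the $\QQ$-factoriality of $\cX_{\bar\eta}$ in (1), however, the paper takes a shorter and genuinely different route: after reducing to $k=\CC$ and smooth $B$ via \Cref{lem:Q-factorialunderbasechange} and a spreading-out argument, it quotes \cite[(12.1.9)]{KollarMori92} together with Bertini to produce an open $0\in U\subseteq B$ over which the family is $\QQ$-factorial, and then invokes \Cref{lem:generalpointPicard} to transfer this to $\cX_{\bar\eta}$. No Hodge theory or line-bundle extension enters.

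Your Hodge-theoretic route is workable, but two steps need sharpening. First, ``Lefschetz $(1,1)$ on the smooth locus'' is not a standard theorem since $\cX_{b,\reg}$ is non-compact; what you actually need (and what holds) is only the \emph{injection} $\Cl(\cX_b)_\QQ \hookrightarrow \rH^{1,1}(\cX_b)_\QQ$, which is cleanest to see by pulling back Weil divisors to a resolution $\widetilde{\cX}_b$ and using that $c_1$ is injective modulo torsion there. Second, the order-by-order obstruction analysis plus Artin approximation is heavier than needed: once a flat integral class is $(1,1)$ on every fiber, the relative exponential sequence recalled in \S\ref{subsec:NS} lifts it to $\underline{\Pic}_{\cX/B}$ in one stroke, with no iteration. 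With these fixes your argument actually never consumes the $\QQ$-factoriality of $\cX_0$ --- it ends up showing that every terminal projective primitive symplectic fiber is already $\QQ$-factorial --- whereas the paper's route via Koll\'ar--Mori uses that hypothesis essentially through the openness statement.
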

\begin{proof}
{For (1), we may assume that $B$ is regular by shrinking $B$ since (1) is about the geometric generic fiber and $0 \in B$ is assumed to be regular.}
By Lemma \ref{lem:Q-factorialunderbasechange} and the spreading out argument (see Lemma \ref{lem:generalpointPicard} for example), we may assume that $k$ is an algebraic closure of finitely generated field of characteristic $0$.
By choosing the embedding $k \hookrightarrow \CC$ and using Lemma \ref{lem:Q-factorialunderbasechange} again, we can reduce the problem to the case where $k=\CC$.
In this case, there exists an open subscheme $0 \in U \subset B$ such that $\mathcal{X}|_{U}$ is $\QQ$-factorial by \cite[(12.1.9)]{KollarMori92} and the Bertini theorem.
Here, note that, since $\pi$ is locally trivial, on each fiber, the singular locus has codimension $\geq 4$ by \cite[Corollary 1]{Namikawa01}.
By Lemma \ref{lem:generalpointPicard}, for general closed point $b\in U$,
we have an isomorphism (as a scheme) $\mathcal{X}_b \simeq \mathcal{X}_{\overline{\eta}}$.
Therefore, we obtain the desired result.

   For (2), it is enough to show the ``only if" part.
We fix a geometric point $s$ of $B$.
By taking the Zariski closure of the image of $s$, we can reduce this to  (1).
It finishes the proof.
\end{proof}

\subsection{Period map and Local Torelli Theorem}
For a complex primitive symplectic variety $X$, it is known that the torsion-free part of the second cohomology $$\rH^2(X, \ZZ)_{\tf}\coloneqq \rH^2(X, \ZZ)/\text{torsion}$$ carries a pure Hodge structure of weight $2$, since $X$ has at worst rational singularities. Moreover, there exists an integral quadratic form called Beauville--Bogomolov(--Fujiki--Namikawa) form 
\[
q_X \colon \rH^2(X,\ZZ)_{\tf} \to \ZZ
\]
that is compatible with the Hodge structure on $\rH^2(X,\ZZ)_{\tf}$ (see \cite[Theorem 8]{Namikawaextension} or \cite[Subsection 5.1]{BL22} for example).
This quadratic form coincides with the classical Beauville--Bogomolov form when $X$ is an irreducible symplectic manifold (up to scaling) and is also referred to as the \emph{Beauville--Bogomolov form} on $X$.

Analogous to irreducible symplectic manifolds, when $X$ varies in a locally trivial family, the Hodge structure and the Beauville--Bogomolov form on $\rH^2(X,\ZZ)_{\tf}$ forms a polarized variation of Hodge structure.

\begin{proposition}\label{prop:PropLocallyTrivial}
Let $\pi \colon \mathcal{X} \to B$ be a locally trivial family of primitive symplectic varieties over $\CC$. 
\begin{enumerate}
    \item The higher direct images $R^i\pi^{\an}_* \ZZ$ are $\ZZ$-local systems. Moreover, $R^2\pi^{\an}_*\ZZ$ is equipped with a $\ZZ$-variation of Hodge structure. 
    \item There exists a morphism of $\ZZ$-variations of Hodge structure.
    \[
    q\colon \Sym^2 R^2\pi^{\an}_* \ZZ(1) \to \ZZ,
    \]
    such that $q_{\mathcal{X}_b}$ on each fiber $\mathcal{X}_b$ is the Beauville--Bogomolov form.
\end{enumerate}
\end{proposition}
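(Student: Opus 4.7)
The plan is to reduce to the smooth case via the simultaneous resolution of \Cref{prop:SimultaneousResolution} and to exploit the analytic local triviality from \Cref{lem:locallytrivial1}.

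\emph{Local constancy and the VHS.} For (1), \Cref{lem:locallytrivial1}(2) asserts that $\pi^{\an}$ is a locally trivial analytic fiber bundle. Since $\pi$ is proper and the fibers are compact, for any $t_0 \in B^{\an}$ there exists a contractible analytic neighborhood $V \ni t_0$ over which the bundle trivializes topologically as $\mathcal{X}^{\an}|_V \cong V \times \mathcal{X}_{t_0}^{\an}$. Pushing forward gives canonical identifications $R^i\pi^{\an}_*\ZZ|_V \cong \underline{\rH^i(\mathcal{X}_{t_0}^{\an}, \ZZ)}$, so each $R^i\pi^{\an}_*\ZZ$ is a $\ZZ$-local system. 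To equip $R^2\pi^{\an}_*\ZZ$ (more precisely, its torsion-free quotient) with a VHS, I would restrict to the smooth locus of $B$ and invoke \Cref{prop:SimultaneousResolution} to obtain a simultaneous resolution $g \colon \widetilde{\mathcal{X}} \to \mathcal{X}$. The composition $\widetilde{\pi} \coloneqq \pi \circ g$ is smooth and proper, so Griffiths's classical construction endows $R^2\widetilde{\pi}^{\an}_*\ZZ_{\tf}$ with a polarized $\ZZ$-VHS of weight $2$. Because each fiber $\mathcal{X}_b$ has rational singularities, the fiberwise pullback $g_b^{\ast} \colon \rH^2(\mathcal{X}_b, \QQ) \hookrightarrow \rH^2(\widetilde{\mathcal{X}}_b, \QQ)$ is an injective morphism of pure weight-$2$ Hodge structures; running this in families yields an inclusion of local systems $g^{\ast} \colon R^2\pi^{\an}_*\QQ \hookrightarrow R^2\widetilde{\pi}^{\an}_*\QQ$ that is fiberwise a morphism of Hodge structures. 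This identifies $R^2\pi^{\an}_*\QQ$ as a sub-$\QQ$-VHS, with integral lattice $R^2\pi^{\an}_*\ZZ_{\tf}$.

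\emph{Construction of the form $q$.} For (2), Namikawa \cite{Namikawaextension} and Bakker--Lehn \cite{BL22} attach a canonical integral BBF form $q_{\mathcal{X}_b}$ to each fiber. Over the contractible trivializing neighborhood $V$ from the previous paragraph, the bundle trivialization is a biholomorphism on fibers, under which $q_{\mathcal{X}_b}$ corresponds to the fixed form $q_{\mathcal{X}_{t_0}}$ for every $b \in V$. The pointwise BBF forms thus glue to a flat section of $\Sym^2 (R^2\pi^{\an}_*\ZZ)^{\vee}$, i.e., a morphism of local systems $q \colon \Sym^2 R^2\pi^{\an}_*\ZZ \to \ZZ$. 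To upgrade this to a morphism of VHS after the Tate twist, it suffices to verify fiberwise that $q_{\mathcal{X}_b}$ has Hodge type $(1,1)$, equivalently that $\rH^{2,0}\oplus\rH^{0,2}$ is $q$-orthogonal to $\rH^{1,1}$; this Hodge--Riemann property of the BBF form is established in \cite{Namikawaextension, BL22}.

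\emph{Main obstacle.} The central subtlety is to see that the fiberwise Hodge filtration on $R^2\pi^{\an}_*\CC$ indeed varies holomorphically, since $\pi$ itself is not smooth and Griffiths's theorem does not apply directly. The simultaneous resolution circumvents this by transferring the problem to the smooth family $\widetilde{\pi}$, but one must then argue that the fiberwise injection $R^2\pi^{\an}_*\QQ \hookrightarrow R^2\widetilde{\pi}^{\an}_*\QQ$ is compatible with the Hodge filtrations in a way that globalizes to a sub-VHS structure. This reduces to the standard statement that pullback along a resolution of singularities of a rational variety is a strict morphism of mixed Hodge structures, pure in degree $2$, a fact that behaves well in flat families.
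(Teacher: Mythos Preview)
The paper's own proof is purely by citation to Namikawa, Amerik--Verbitsky, and Bakker--Lehn, so there is little to compare at the level of argument. Your strategy---reduce to the smooth case via the simultaneous resolution of \Cref{prop:SimultaneousResolution} and exploit rational singularities to embed $R^2\pi^{\an}_*\QQ$ as a sub-VHS of $R^2\widetilde{\pi}^{\an}_*\QQ$---is essentially what underlies those references, and that portion of your treatment of (1) is correct.

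There is, however, a genuine error in how you handle local constancy and the BBF form. You read \Cref{lem:locallytrivial1}(2) as saying that $\pi^{\an}$ is an analytic fiber bundle over $B^{\an}$, and then assert that over a contractible $V$ the trivialization ``is a biholomorphism on fibers.'' Neither holds. The local triviality in \Cref{lem:locallytrivial1}(2) is the Flenner--Kosarew condition: it is local on the \emph{total space}, not on the base, and asserts only that a neighborhood of each point $p \in \mathcal{X}_t$ splits as a product. This does not directly produce a trivialization of the whole family over a neighborhood of $t$, and it certainly does not yield biholomorphisms between distinct fibers---if it did, every locally trivial family would be analytically isotrivial. To get even a \emph{topological} fiber bundle structure over the base one needs an additional argument (Thom's first isotopy lemma for the induced Whitney stratification, or the simultaneous resolution plus Ehresmann together with the fact that the resolution collapses a locally constant configuration). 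Your argument for (2) then inherits the problem: since any such trivialization is at best topological, and the BBF form depends on the complex structure through its normalization via the symplectic form, you cannot conclude that $q_{\mathcal{X}_b}$ is constant merely from a homeomorphism $\mathcal{X}_b \cong \mathcal{X}_{t_0}$. The correct route is to show that $q_{\mathcal{X}_b}$ varies continuously in $b$ (for instance via the Fujiki-type relation on the resolution, as in \cite[Lemma 5.7]{BL22}) and then observe that, being integer-valued on the local system $R^2\pi^{\an}_*\ZZ_{\tf}$, it must be locally constant.
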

\begin{proof}
The statement (1) is due to Namikawa (\cite[p.13]{Namikawa06} (arXiv version)). See also \cite[Proposition 5.1]{Amerik-Verbitskycollections}. For other statements, see \cite[Corollary 3.5, Lemma 5.7, Lemma 4.9]{BL22}.
\end{proof}

The local Torelli theorem completes the picture:
\begin{proposition}[Local Torelli Theorem]\label{prop:localTorelliThm}
Let $X$ be a complex primitive symplectic variety,
and $\pi \colon \mathcal{X} \to \Def^{\lt}(X)$ be the Kuranishi family of locally trivial deformations of $X$ (see \cite[Subsection 4.4]{BL22}). Let $\Lambda = \rH^2(X,\ZZ)_{\tf}$ the lattice with Beauville--Bogomolov form. The period map 
\[
\Def^{\lt}(X) \to \Omega_{\Lambda} 
\]
associated with the variation of Hodge structure $R^2\pi_* \ZZ$ is a local isomorphism.
Here, $\Omega_{\Lambda}$ is the period domain
\[
\Omega_{\Lambda}
:=
\Set*{ [\sigma] \in \PP(\Lambda \otimes \CC) \given q(\sigma) =0, q(\sigma, \bar{\sigma})>0}.
\]

\end{proposition}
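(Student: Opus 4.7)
The plan is to follow the classical template for local Torelli theorems: show that both $\Def^{\lt}(X)$ and the period domain $\Omega_\Lambda$ are smooth complex analytic spaces of the same dimension at the relevant points, and that the differential of the period map $\mathcal{P}$ is injective at $[X]$. Together with the holomorphic inverse function theorem, these two facts yield the claimed local isomorphism. The fact that $\mathcal{P}$ is a well-defined holomorphic map is already furnished by the variation of Hodge structure on $R^2\pi_*\ZZ$ from \Cref{prop:PropLocallyTrivial}.

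First I would recall that $\Def^{\lt}(X)$ is smooth at $[X]$ with tangent space $\rH^1(X, T_X^{\lt})$, where $T_X^{\lt}$ denotes the subsheaf of the reflexive tangent sheaf governing locally trivial deformations; this is the unobstructedness theorem of Bakker--Lehn \cite[Theorem 4.7]{BL22}. The symplectic form $\sigma$ induces a contraction isomorphism $T_X^{\lt} \xrightarrow{\sim} \Omega^{1,[\lt]}_X$ of reflexive sheaves on $X$, so
\[
\dim_{\CC} T_{[X]}\Def^{\lt}(X) \;=\; h^1(X, T_X^{\lt}) \;=\; h^{1,1}(X) \;=\; b_2(X) - 2,
\]
where the last equality uses the primitive symplectic conditions $\rH^1(X, \cO_X) = 0$ and $\rH^0(X_{\reg}, \Omega^2) = \CC \sigma$ together with the pure weight-$2$ Hodge decomposition of $\rH^2(X, \CC)_{\tf}$ provided by \Cref{prop:PropLocallyTrivial}. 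On the other hand, $\Omega_\Lambda$ is an open subset of the smooth quadric in $\PP(\Lambda_{\CC})$ cut out by $q(\sigma) = 0$, so $T_{[\sigma]}\Omega_\Lambda \cong \sigma^{\perp}/\CC\sigma$ also has complex dimension $b_2(X) - 2$.

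The key step is then to show the differential $d\mathcal{P}_{[X]}$ is an isomorphism. Griffiths' description of the period derivative, combined with the Kodaira--Spencer identification $T_{[X]}\Def^{\lt}(X) \cong \rH^1(X, T_X^{\lt})$, realises $d\mathcal{P}_{[X]}$ as the cup product / contraction map
\[
\rH^1(X, T_X^{\lt}) \;\longrightarrow\; \Hom(\rH^{2,0}(X), \rH^{1,1}(X)), \qquad \theta \;\longmapsto\; (\sigma \mapsto \theta \lrcorner \sigma),
\]
after identifying $T_{[\sigma]}\Omega_\Lambda$ with $\Hom(\rH^{2,0}, \rH^{1,1})$ via the Hodge filtration. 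Under the reflexive isomorphism $T_X^{\lt} \xrightarrow{\sim} \Omega^{1,[\lt]}_X$ produced above, this map becomes (up to a nonzero scalar) the identity on $\rH^{1,1}(X)$, and is therefore an isomorphism, completing the proof.

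The main obstacle lies not in the formal pattern of the argument but in justifying each ingredient in the singular setting, where the smooth Kähler arguments of Beauville do not apply directly: one needs purity of $\rH^2$, the correct Hodge decomposition, and Serre-type duality for reflexive differentials on $X$. I would therefore invoke the framework developed by Bakker--Lehn in \cite[Sections 3--5]{BL22}, in particular the fact that $q$ is a morphism of Hodge structures and the smoothness of $\Def^{\lt}$, so that the proof reduces to the standard contraction-with-$\sigma$ calculation familiar from the smooth irreducible symplectic case.
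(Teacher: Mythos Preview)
Your sketch is correct and faithfully reproduces the Beauville-style argument as adapted to the singular setting by Bakker--Lehn; the paper itself does not give an independent proof but simply cites \cite[Proposition 5.5]{BL22}, so your proposal is in fact more detailed than what appears there. One minor point of notation: what you call $T_X^{\lt}$ is just the ordinary tangent sheaf $T_X$ in \cite{BL22} (locally trivial deformations are governed by $\rH^1(X,T_X)$ rather than by $\mathrm{Ext}^1(\Omega_X,\cO_X)$), and the identification $\rH^1(X,T_X)\cong \rH^1(X,\Omega_X^{[1]})\cong \rH^{1,1}(X)$ you use is exactly \cite[Lemma 4.6 and Corollary 3.5]{BL22}.
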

\begin{proof}
See \cite[Proposition 5.5]{BL22}.
\end{proof}

\section{Cone conjectures for primitive symplectic varieties}
For K3 surfaces, the study of the action of the automorphism group on the nef cone plays an important role for showing finiteness results; see \cite{Sterk}, \cite{Bright-Logan-vanLuijk}.
In this section, we recall its higher-dimensional generalizations, namely the so-called Kawamata--Morrison cone conjectures, which have been established for primitive symplectic varieties (see \cite{Markmansurvey}, \cite{Markman-Yoshioka}, \cite{AV17} for the smooth case and \cite{LMP22} for the generalization to the singular setting).

\subsection{Néron--Severi lattices and cones}\label{subsec:NS}
Let $B$ be a variety over $k$ and  $\pi \colon \cX \to B$ a family of primitive symplectic varieties (in the sense of \Cref{def:hyperkahlerfamily}).

Suppose $k = \CC$, the exponential sequence induces an exact sequence of analytic sheaves
\[
0 = R^1\pi_* \cO_{\mathcal{X}} \to \underline{\Pic}_{\mathcal{X}/B} \to R^2\pi_* \ZZ \to R^2 \pi_* \cO_{\mathcal{X}}.
\]
Since fibers $\mathcal{X}_b$ have at worst rational singularities, Du Bois--Jarraud's base-change theorem \cite[Theorem 2.62, Complement 2.62.5]{KollarGeneralType} implies that $R^i\pi_*\cO_{\mathcal{X}}$ is locally free for all integers $i \geq 0$.
Since $\rH^1(\mathcal{X}_b, \cO_{\mathcal{X}_b}) =0$ for any $b \in B$, the Picard scheme $\underline{\Pic}_{\mathcal{X}/B}$ is of dimension zero. For this reason, the identity component $\underline{\Pic}^0_{\mathcal{X}/B} $ is trivial and on the geometric fiber $\cX_{\bar{b}}$,
\[
\underline{\Pic}_{\mathcal{X}/B}(\bar{b})\cong \Pic(\mathcal{X}_{\bar{b}}) \cong \NS(\mathcal{X}_{\bar{b}})
\]
are Néron–Severi groups. Moreover, it is easy to see the Lefschetz-$(1,1)$ theorem holds by \Cref{prop:PropLocallyTrivial} (1).

\begin{definition}
    Let $X$ be a primitive symplectic variety over a subfield $k \subseteq \CC$. The \emph{Néron--Severi lattice} is the (torsion-free) Néron--Severi group 
    \(
    \NS(X_{k})_{\tf} = \Pic(X_{k})_{\tf}
    \)
    together with restriction of the Beauville--Bogomolov form along \[
    \NS(X_{k})_{\tf} =\NS(X_{\CC})_{\tf} \hookrightarrow \rH^2(X_{\CC}, \ZZ)_{\tf}.
    \]For simplicity, we denote $\NS(X_{k})$ for the Néron--Severi lattice $\NS(X_{k})_{\tf}$ in the following. 
\end{definition}
\begin{remark}
    By spreading out argument and the proof of \cite[Corollary 4.2.1]{Bindt}, there is a unique Beauville--Bogomolov form on $\NS(X_{k})_{\tf}$ for any field $k$ in characteristic zero, which is independent of the field embedding $k \hookrightarrow \CC$.
\end{remark}
In the following, \(\pi \colon \mathcal{X} \to B\) is assumed to be \emph{projective}, and each fiber \(\mathcal{X}_b\) for \(b \in B\) is a primitive symplectic variety with \(\QQ\)-factorial terminal singularities. In this case, the Picard scheme $\underline{\Pic}_{\cX/S}$ admits a global section over $B$ given by the relative ample line bundle and thus $\rk \underline{\Pic}_{\cX/B}(B) \geq 1$.

Consider the following relative Néron--Severi space on $\mathcal{X}$ (modulo $\pi$-numerical equivalence):
\[
N^1(\mathcal{X}/B)_{\RR} = \underline{\Pic}_{\cX/B}(B)/_{\equiv_\pi} \otimes_{\QQ} \RR
\]
where $\equiv_\pi$ is the numerical equivalence relation over $B$, i.e., $D_1 \equiv_\pi D_2$ if and only if $D_1 \cdot C = D_2 \cdot C$ for any curve in $\mathcal{X}$ such that $\pi(C)$ is a point.
\begin{definition}
    Let $\Eff(X/B) \subseteq N^1(\cX/B)_{\RR}$ be the cone generated by effective $\RR$-Cartier divisors over $B$.
\begin{enumerate}
    \item $\Nef^e(\cX/B) = \Nef(\cX/B) \cap \Eff(\cX/B)$ is the \emph{effective nef cone};
    \item $\overline{\Mov}^e(\cX/B) = \overline{\Mov}(\cX/B) \cap \Eff(\cX/B)$ is the \emph{effective movable cone}.
\end{enumerate}
Moreover, we also consider the following rational cones
\begin{enumerate}[resume]
    \item $\Nef^+(\cX/B) \coloneqq \Conv(\Nef(\cX/B) \cap N^1(\cX/B)_{\QQ})$.
    \item ${\Mov}^+(\cX/B) \coloneqq \Conv(\overline{\Mov}(\cX/B) \cap N^1(\cX/B)_{\QQ})$
\end{enumerate}
where $\Conv(-)$ stands for the convex hull.

For simplicity of notations, we denote
\[
\Nef^e(X) \quad (\text{resp.~}\overline{\Mov}^e(X))
\]
for the corresponding effective nef (resp.~movable) cone for simplicity when $B = \Spec(F)$ for a a field $F$.
\end{definition}

\begin{conjecture}[Kawamata--Morrison Cone Conjecture]
\label{conj:ConeConjecture}
Let \(\pi \colon \mathcal{X} \to B\) be a projective family of primitive symplectic varieties with \(\QQ\)-factorial terminal singularities.
\begin{enumerate}
    \item The effective nef cone \(\Nef^{e}(\cX/B)\) admits a rational polyhedral fundamental domain under the action of the relative automorphism group \(\Aut_{\pi}(\mathcal{X})\), which consists of automorphisms of \(\mathcal{X}\) over \(\pi\).
    \item The effective movable cone \(\overline\Mov^e(\cX/B)\) admits a rational polyhedral fundamental domain under the action of the relative pseudo-automorphism group \(\PsAut_{\pi}(\mathcal{X})\), which consists of birational automorphisms of \(\mathcal{X}\) over $B$ that are isomorphisms in codimension one. 
\end{enumerate}
\end{conjecture}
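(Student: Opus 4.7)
The plan is to reduce \Cref{conj:ConeConjecture} to the absolute Kawamata--Morrison cone conjecture for $\QQ$-factorial terminal primitive symplectic varieties over a non-algebraically closed field, which the authors announce as \Cref{thm:coneconj}. Over algebraically closed fields the absolute conjecture is known: for smooth hyper-Kähler manifolds by Markman--Yoshioka and Amerik--Verbitsky, and in the $\QQ$-factorial terminal singular case by Lehn--Mongardi--Pacienza. The bridge to the relative setting is that, letting $\eta$ be the generic point of $B$ and $\mathcal{X}_{\eta}$ the generic fiber, there is a natural identification
\[
N^{1}(\mathcal{X}/B)_{\RR} \cong \NS(\mathcal{X}_{\eta})_{\RR},
\]
under which the relative effective nef (resp.\ movable) cone corresponds to the absolute effective nef (resp.\ movable) cone of the $k(B)$-variety $\mathcal{X}_{\eta}$. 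Moreover, by representability and separatedness of the relative (pseudo-)automorphism functor (cf.\ \Cref{lem:finitePolarizedAut}), a relative (pseudo-)automorphism of $\mathcal{X}/B$ is, up to finite index, the same as a $k(B)$-(pseudo-)automorphism of $\mathcal{X}_{\eta}$.

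The first step is then to apply the absolute cone conjecture to $\mathcal{X}_{\eta}$ over the non-algebraically closed field $k(B)$, producing a rational polyhedral fundamental domain for $\Aut(\mathcal{X}_{\eta})$ (resp.\ $\PsAut(\mathcal{X}_{\eta})$) inside the corresponding cone. The non-algebraically closed case itself is obtained by Galois descent from the statement over $\overline{k(B)}$: the geometric (pseudo-)automorphism group acts on $\NS(\mathcal{X}_{\bar\eta})_{\RR}$ with a rational polyhedral fundamental domain, and one restricts both the cone and the acting group to the $\mathrm{Gal}(\overline{k(B)}/k(B))$-invariant subspace and subgroup. The crucial analytic point is that $\NS(\mathcal{X}_{\eta})_{\RR}$ contains the class of a $\pi$-ample line bundle, so the Beauville--Bogomolov form remains of hyperbolic signature after restriction. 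An application of Looijenga's theorem on arithmetic groups acting on hyperbolic cones with cusps, in this equivariant setting, then ensures that a rational polyhedral fundamental domain exists in the invariant subspace as well.

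The main obstacle will be the Galois descent step together with the Matsusaka--Mumford-type statement that matches $\Aut_{\pi}(\mathcal{X})$ (resp.\ $\PsAut_{\pi}(\mathcal{X})$) with the Galois-invariant part of the geometric (pseudo-)automorphism group of $\mathcal{X}_{\bar\eta}$ up to finite index. This matching draws on the finiteness of polarized automorphism groups and the countability of $\Bir(X_{L})$ from \Cref{lem:finitePolarizedAut}, on spreading-out arguments for relative automorphism schemes, and on the properness of $\pi$ to extend $k(B)$-(pseudo-)automorphisms over codimension-one points of $B$. In part (2) the $\QQ$-factorial terminal hypothesis is used essentially: all fibers are terminal by \Cref{prop:loctrivQ-factterminal}, so minimal models differ only by flops and pseudo-automorphisms act correctly on $N^{1}(\mathcal{X}/B)_{\RR}$; the $\QQ$-factoriality itself is what guarantees that the resulting cone decomposition is locally rational polyhedral inside the movable cone.
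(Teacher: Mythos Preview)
The statement you are attempting to prove is labeled \emph{Conjecture} in the paper, not a theorem; the authors do not give a proof of it. What the paper actually proves is the \emph{absolute} case over a (possibly non-algebraically closed) field in \Cref{thm:coneconj}, and then deduces certain \emph{consequences} of the relative cone conjecture (finiteness of small $\QQ$-factorial modifications over $B$) under extra hypotheses in \Cref{prop:RelFiniteness} and \Cref{cor:finiteSQM}. The full relative statement \Cref{conj:ConeConjecture} remains open in the paper; there is no proof to compare against.

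As for your proposed strategy, it does not close the gap, and the main obstruction is precisely the step you flag but do not carry out: the identification of $\Aut_{\pi}(\mathcal{X})$ (resp.\ $\PsAut_{\pi}(\mathcal{X})$) with $\Aut(\mathcal{X}_{\eta})$ (resp.\ $\PsAut(\mathcal{X}_{\eta})$), even up to finite index. Restriction to the generic fiber certainly gives an injection, but surjectivity fails in general. A $k(B)$-automorphism of $\mathcal{X}_{\eta}$ spreads out only over a dense open of $B$, and the Matsusaka--Mumford argument of \Cref{prop:unpolarizedMatsusaka-Mumford} only extends it across a codimension-one point as a birational map that is an isomorphism in codimension one on the fiber, not as a global automorphism of $\mathcal{X}/B$. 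For pseudo-automorphisms the situation is worse: a $k(B)$-birational self-map of $\mathcal{X}_{\eta}$ that is an isomorphism in codimension one need not extend to a $B$-birational self-map of $\mathcal{X}$ that is an isomorphism in codimension one globally; the indeterminacy locus may acquire divisorial components over special fibers. This is exactly why the paper, in \Cref{prop:RelFiniteness}, does \emph{not} attempt to prove the relative cone conjecture but instead imports the action of $\PsAut(\mathcal{X}_{\eta})$ on $\Mov^{+}(\mathcal{X}_{\eta})$ directly and appeals to the machinery of \cite{HPX24} to extract only the finiteness of SQMs. Your claimed identification $N^{1}(\mathcal{X}/B)_{\RR}\cong \NS(\mathcal{X}_{\eta})_{\RR}$ is also not automatic without further hypotheses on $B$ and $\pi$ (e.g.\ $\rH^{1}(B,\cO_{B})=0$ or a section).
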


\subsection{Kawamata--Morrison cone conjectures over a field}

In this subsection, we consider the cone conjecture when $X \coloneqq \cX \to B = \Spec(F)$ over a field. Here $F$ is an arbitrary field in characteristic zero, which is not necessarily algebraically closed.
The motivation is to obtain some finiteness results on the generic fibers of families of primitive symplectic varieties.

Fix an algebraic closure $\overline{F}$ for $F$.
If $X_{\overline{F}}$ has $\QQ$-factorial terminal singularities, the \textit{birational ample cone} is defined as the union \[
\BA(X) \coloneqq \bigcup\limits_{f\in \cB_X} f^* \Amp(Y) \subseteq N^1(X)_{\RR},
\]where  
\[
\cB_X = \Set*{ f\colon X\dashrightarrow Y \given \parbox{15em}{$Y_{\overline{F}}$ is a $\QQ$-factorial terminal primitive symplectic variety over $\overline{F}$ and $f$ is birational} }.
\]
{Notice that being ample (resp.~ movable) is stable under field extensions, thus we have $\BA(X) = \BA(X_{\overline{F}}) \cap N^1(X)_{\RR}$ (resp.~ $\Mov(X) = \Mov(X_{\overline{F}}) \cap N^1(X)_{\RR}$) by the Galois descent as in \cite[Proposition 4.2.2]{takamatsu2022}. Then it follows from \cite[Proposition 5.8]{LMP22} that the closure of $\BA (X)$ is equal to the movable cone $\overline{\Mov}(X)$.}

\begin{theorem}\label{thm:coneconj}
Let $X$ be a projective primitive symplectic variety over $F$ with $b_2(X) \geq 5$ such that $X_{\overline{F} }$ is $\QQ$-factorial and terminal.  Then the cone $\Nef^+(X)$ (resp.~$\Mov^+(X)$) admits a fundamental domain $\Pi$ under the action of $\Aut(X)$ (resp.~$\Bir(X)$), which is a rational polyhedral subcone.

\end{theorem}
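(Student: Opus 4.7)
The plan is to reduce to the cone conjectures over $\overline{k}$, which are established in \cite{LMP22} under our hypotheses, and then descend to $k$ via a Galois-equivariant argument. This mirrors the strategy used by the third author for the smooth case in \cite{takamatsu2022}.

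First I would set up the Galois action. Put $\overline{G} = \Gal(\overline{k}/k)$. Since $\NS(X_{\overline{k}})$ is finitely generated, the action of $\overline{G}$ on $N^1(X_{\overline{k}})_{\RR}$, on the groups $\Aut(X_{\overline{k}})$ and $\Bir(X_{\overline{k}})$, and on the cones $\Nef^+(X_{\overline{k}})$ and $\Mov^+(X_{\overline{k}})$ factors through a finite quotient $G$. Using the equality $\BA(X) = \BA(X_{\overline{k}}) \cap N^1(X)_{\RR}$ already recorded in the excerpt, one verifies the compatibilities
\[
N^1(X)_{\RR} = (N^1(X_{\overline{k}})_{\RR})^{G}, \quad \Aut(X) = \Aut(X_{\overline{k}})^{G}, \quad \Bir(X) = \Bir(X_{\overline{k}})^{G},
\]
\[
\Nef^+(X) = \Nef^+(X_{\overline{k}})^{G}, \qquad \Mov^+(X) = \Mov^+(X_{\overline{k}})^{G},
\]
as in \cite[Section 4.2]{takamatsu2022}.

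Next, since $X_{\overline{k}}$ is $\QQ$-factorial terminal with $b_2 \geq 5$, the cone conjectures of \cite{LMP22} provide rational polyhedral fundamental domains $\Pi_{\mathrm{nef}} \subset \Nef^+(X_{\overline{k}})$ for $\Aut(X_{\overline{k}})$ and $\Pi_{\mathrm{mov}} \subset \Mov^+(X_{\overline{k}})$ for $\Bir(X_{\overline{k}})$. To descend to $k$, I would form the semidirect products $\widetilde{\Gamma}_{\mathrm{nef}} \coloneqq \Aut(X_{\overline{k}}) \rtimes G$ and $\widetilde{\Gamma}_{\mathrm{mov}} \coloneqq \Bir(X_{\overline{k}}) \rtimes G$, which act on the corresponding cones and contain $\Aut(X)$ and $\Bir(X)$ as the respective $G$-fixed subgroups of the normal factor. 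A general descent lemma for rational polyhedral fundamental domains under finite group extensions (as used in \cite[Section 4]{takamatsu2022}) upgrades $\Pi_{\mathrm{nef}}$ and $\Pi_{\mathrm{mov}}$ to rational polyhedral fundamental domains for $\widetilde{\Gamma}_{\mathrm{nef}}$ and $\widetilde{\Gamma}_{\mathrm{mov}}$; intersecting these with the invariant subspace $N^1(X)_{\RR}$ then yields rational polyhedral fundamental domains for $\Aut(X)$ on $\Nef^+(X)$ and for $\Bir(X)$ on $\Mov^+(X)$, which is what the theorem asserts.

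The hard part is the descent step itself. One must check that (a) passing from $\Aut(X_{\overline{k}})$ to $\Aut(X_{\overline{k}}) \rtimes G$ preserves the existence of a rational polyhedral fundamental domain (handled by averaging $\Pi_{\mathrm{nef}}$ over $G$ together with a barycentric subdivision), and (b) the intersection of the resulting fundamental domain with the $G$-invariant linear subspace is again rational polyhedral and serves as a fundamental domain for the $G$-fixed subgroup $\Aut(X)$. Both (a) and (b) are purely linear-algebraic assertions about finite group actions on rational polyhedral cones; the geometric hypotheses on $X$ enter only through the input from \cite{LMP22}. Consequently, the descent argument of \cite{takamatsu2022}, which already handles the smooth case, applies verbatim to the $\QQ$-factorial terminal primitive symplectic setting.
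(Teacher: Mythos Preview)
Your approach matches the paper's: reduce to $\overline{k}$ via \cite{LMP22} (plus the Lefschetz principle, using \Cref{lem:Q-factorialunderbasechange} to preserve $\QQ$-factoriality under change of algebraically closed field), then descend to $k$ by the Galois-equivariant argument of \cite[Theorem 4.2.7 and Theorem 4.1.4]{takamatsu2022}.

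One point is understated. You assert that the descent steps (a) and (b) are ``purely linear-algebraic'' and that ``the geometric hypotheses on $X$ enter only through the input from \cite{LMP22}.'' The paper's proof singles out an additional geometric input that is needed for the method of \cite{takamatsu2022} to carry over to the singular setting: the rigidity of prime exceptional divisors on $X_{\overline{k}}$, supplied in the $\QQ$-factorial terminal case by \cite[Theorem 1.1]{KMPPzariski} (the analogue of \cite[Theorem 5.8]{Markmansurvey} in the smooth case). This is used in the chamber-and-wall analysis underlying the descent for the movable cone, so it is not subsumed by the linear-algebra package alone. Once you add this ingredient, your sketch is complete and coincides with the paper's argument.
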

\begin{proof}
{
Let us only give a sketch of the proof and refer to Faucher \cite{Faucher2025} for full details\footnote{After the first version of the paper was made public, the authors were informed that full details were being carried out in the PhD thesis of Aur\'elien Faucher.}.}
When $F = \CC$, this is Theorem 1.2 of \cite{LMP22}. The Lefschetz principle ({here we use Lemma \ref{lem:Q-factorialunderbasechange}}) implies that it holds for any algebraically closed field of characteristic zero.

If $F$ is not algebraically closed, we note that the method of \cite[Theorem 4.2.7 (and Theorem 4.1.4)]{takamatsu2022} also applies to singular primitive symplectic varieties. Note that we also need the basic facts about birational cone conjecture over $\overline{F}$ in \cite{LMP22}, and the fact that a prime exceptional divisor on $X_{\overline{F}}$ is rigid (see \cite[Theorem 1.1]{KMPPzariski}, cf.\ \cite[Theorem 5.8]{Markmansurvey}). Thus, Theorem \ref{thm:coneconj} holds over an arbitrary base field $F$ of characteristic zero. 
\end{proof}

Suppose $F = \CC$. Recall that  \[
\Mon^{\lt}_{\hdg}(X_{\CC}) \subseteq O(\rH^2(X_{\CC},\ZZ),q_X)\] is the subgroup consisting of all parallel transport operators from a locally trivial family that contains $X_{\CC}$, which also preserves Hodge structures.
\begin{definition}\label{def:WallDiv}
Let $X_{\CC}$ be a complex primitive symplectic variety with $\QQ$-factorial terminal singularities. Then a Cartier divisor $D$ is called a \textit{wall divisor} or a \textit{monodromy birationally minimal} (MBM) class if $q(D) < 0$ and 
\[
\Phi(D^\perp) \cap \BA(X_\CC) = \varnothing,
\]
for any  $\Phi \in \Mon^{\lt}_{\hdg}(X_{\CC})$. We denote the set of wall divisors on $X_\CC$ by $\mathcal{W}(X_\CC)$.
\end{definition}
\begin{example}[MBM classes on $K3^{[n]}$-type manifolds]\label{eg:cone-K3^2}
If $X$ is a K3 surface, the MBM classes in $N^1(X)$ are the classes of $(-2)$-curves. 
{By the work of Hassett--Tschinkel \cite{HT10} (see also \cite[Theorem 4.1]{AV22}), if $(X,L)$ is a smooth polarized irreducible symplectic variety of $K3^{[2]}$-type, then the norm and divisibility of MBM classes $D$ in $N^1(X)$ satisfy one of the following conditions:
    \begin{enumerate}
        \item $q(D) = -10$, and $\Div(D) = 2$,
        \item $q(D) = -2$, and $\Div(D) = 1$, or
        \item $q(D) = -2$, and $\Div(D) = 2$.
    \end{enumerate}
More generally, for $K3^{[n]}$-type manifolds, information on the MBM classes can be obtained from the minimal model program of $K3^{[n]}$-type manifolds, see \cite{Mongardi15}, \cite{BayerMacri}, \cite{Bayer-Hassett-Tschinkel}.
}
\end{example}

Amerik and Verbitsky \cite{AV17} observed that the Beauville--Bogomolov squares of all primitive wall divisors on a smooth irreducible symplectic variety over $\CC$ are bounded from below. This fact is generalized to primitive symplectic varieties with $\QQ$-factorial terminal singularities in \cite[Proposition 7.7]{LMP22}. 
\begin{proposition}\label{prop:boundedMBM}
    Let $X$ be a primitive symplectic variety over $\CC$, with $\QQ$-factorial terminal singularities and $b_2(X) \geq 5$. There exists an integer $N > 0$ such that 
    \[
    q_Y(D) \geq -N,
    \]
    for any $\QQ$-factorial terminal primitive symplectic variety $Y$ which is locally trivial deformation equivalent to $X$, and any primitive wall divisor $D$ on $Y$.    
\end{proposition}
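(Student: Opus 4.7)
My plan is to adapt the strategy of Amerik--Verbitsky for smooth hyper-K\"ahler manifolds (\cite{AV17}) to the $\QQ$-factorial terminal primitive symplectic setting, following the approach developed in \cite{LMP22}. The first observation is that the notion of wall divisor (\Cref{def:WallDiv}) is manifestly invariant under parallel transport in a locally trivial family: the set of primitive wall divisors arising across \emph{all} $Y$ in the locally trivial deformation class of $X$ is a union of orbits for $\Mon^{\lt}(X)$ acting on the fixed abstract lattice $\Lambda \coloneqq \rH^2(X, \ZZ)_{\tf}$. It therefore suffices to produce, for each primitive MBM class $D \in \Lambda$ realized on some $Y$, a specific deformation $Y'$ in the class and a birational model where the geometric consequences of being MBM can be turned into a quantitative bound on $q(D)$.

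The key ingredients I would assemble are: (i) the Local Torelli theorem (\Cref{prop:localTorelliThm}), which allows me to deform any $Y$ to a projective $Y'$ of Picard rank $2$ such that the parallel transport of $D$ lies in $\NS(Y')$ and spans a codimension-one face of its cone structure; (ii) the Kawamata--Morrison cone conjecture (\Cref{thm:coneconj}), which on each such $Y'$ provides a rational polyhedral fundamental domain for $\Bir(Y')$ acting on $\overline{\Mov}^e(Y')$, so that MBM classes correspond, up to this action, to a finite list of walls whose defining vectors have norm controlled by the discriminant form of $\Lambda$; and (iii) the geometric interpretation of MBM classes as proportional to the class of an extremal ray of the Mori cone of some birational model, combined with Kawamata's length bound for extremal rays on varieties with terminal singularities. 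Translating the length bound into a bound on $q(D)$ would use the Beauville--Bogomolov--Fujiki--Namikawa relation $\int_{Y'} D^{2n} = c_{X} \cdot q(D)^n$, whose Fujiki constant $c_{X}$ is a deformation invariant within the locally trivial class.

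The main obstacle is that the original Amerik--Verbitsky argument in the smooth case makes essential use of hyper-K\"ahler twistor families, which are not directly available for singular $Y$. The workaround, as carried out in \cite{LMP22}, is to replace twistor deformations by general locally trivial K\"ahler deformations, whose unobstructedness in the $\QQ$-factorial terminal case follows from the smoothness of $\Def^{\lt}(X)$ (itself a consequence of \Cref{prop:localTorelliThm}), and to work inside the singular MMP. This requires verifying that divisorial and flopping contractions on $\QQ$-factorial terminal primitive symplectic varieties satisfy Kawamata-type length estimates uniformly across the deformation class. This uniformity, rather than any single step, is where I expect the technical heart of the proof to lie; it is why the assumption $\codim X^{\sing} \geq 4$ (terminal singularities) and $b_2(X) \geq 5$ are both essential.
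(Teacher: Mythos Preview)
The paper does not supply its own proof of this proposition: it simply records that the smooth case is due to Amerik--Verbitsky \cite{AV17} and that the $\QQ$-factorial terminal generalization is \cite[Proposition~7.7]{LMP22}. So your proposal is attempting more than the paper itself, and the relevant comparison is with the actual Amerik--Verbitsky/LMP22 argument.

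On that comparison, your sketch has two substantive problems. First, ingredient (ii) is circular in the logical structure of \cite{LMP22}: there the boundedness of MBM squares is established \emph{first} and is then used as an input to the proof of the cone conjecture (your \Cref{thm:coneconj}), not the other way around. Even ignoring circularity, the cone conjecture on a single $Y'$ only tells you that a fundamental domain has finitely many walls; it says nothing about bounding their BBF squares uniformly as $Y'$ varies over the whole deformation class, which is exactly the content of the proposition.

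Second, ingredient (iii) is not the mechanism of the Amerik--Verbitsky proof. Kawamata's length-of-extremal-ray bound is a statement about $-K \cdot C$, which is vacuous here since $K=0$; there is no evident way to convert it into a uniform bound on $q(D)$. The genuine argument is of an entirely different nature: one uses that $\Mon^{\lt}(X)$ is an arithmetic subgroup of $\mathrm{O}(\Lambda)$ and that, when $b_2 \geq 5$, its action on the (projectivized) positive cone is governed by Ratner/Mozes--Shah type equidistribution. If the $\Mon^{\lt}$-invariant set of primitive MBM classes had unbounded $|q|$, the union of their orthogonal hyperplanes $D^{\perp}$ would be \emph{dense} in the positive cone. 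This is incompatible with the fact that for a very general (K\"ahler) deformation the K\"ahler cone is the full positive cone, hence contains an open set missing all such walls. The role of \cite{LMP22} is to port this ergodic argument to the singular setting, replacing twistor lines by locally trivial K\"ahler deformations; \Cref{prop:localTorelliThm} is indeed relevant for that step, but the heart of the proof is the density/ergodicity input, which your outline omits.
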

From this fact and \Cref{thm:coneconj}, we can deduce the finiteness of birational models of $X$ over a general field $k$ of characteristic $0$ (not necessarily algebraically closed).

\begin{corollary}\label{cor:finiteBir}
If $X$ is a projective $\QQ$-factorial primitive symplectic variety over $F$ with terminal singularities, and $b_2(X) \geq 5$, then up to $F$-isomorphism, there are only finitely many $\QQ$-factorial terminal $F$-birational models of $X$.
\end{corollary}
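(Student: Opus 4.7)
The plan is to reduce the corollary to a chamber-counting argument inside the birational ample cone $\BA(X) \subseteq N^1(X)_\RR$, and then apply \Cref{thm:coneconj} together with the boundedness of wall divisors \Cref{prop:boundedMBM}.

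First I would establish the dictionary between $k$-birational models and chambers: each $\QQ$-factorial terminal $k$-birational model $f \colon X \dashrightarrow Y$ determines an open chamber $C_f \coloneqq f^*\Amp(Y) \subseteq \BA(X)$, and the group $\Bir(X)$ acts on $N^1(X)_\RR$ by pullback, permuting these chambers via $g \cdot C_f = C_{f \circ g}$. Using the fact that a birational map between $\QQ$-factorial terminal primitive symplectic varieties which pulls back an ample class to an ample class is necessarily an isomorphism, one checks that two models $Y_1, Y_2$ are $k$-isomorphic if and only if the associated chambers $C_{f_1}$ and $C_{f_2}$ lie in a common $\Bir(X)$-orbit. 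Thus the corollary reduces to showing that only finitely many $\Bir(X)$-orbits of chambers cover $\BA(X)$.

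Next, by \Cref{thm:coneconj}, the action of $\Bir(X)$ on $\Mov^+(X)$ admits a rational polyhedral fundamental domain $\Pi$, so it suffices to show that $\Pi$ meets only finitely many chambers $C_f$. The walls between adjacent chambers of $\BA(X)$ lie on hyperplanes $D^\perp$ for primitive wall divisors $D$. Applying \Cref{prop:boundedMBM} after base change from $k$ to $\bar{k}$ and further to $\CC$ via the Lefschetz principle (in the same spirit as in the proof of \Cref{thm:coneconj}), these classes satisfy $q(D) \geq -N$ for a uniform constant $N$. Fixing a rational ample class $H$ on $X$ and slicing $\Pi$ by the affine hyperplane $\{H \cdot (-) = 1\}$ yields a compact rational polytope; since the Beauville--Bogomolov form has signature $(1, \rho-1)$ on $N^1(X)_\RR$, only finitely many primitive integer classes $D$ with $q(D) \geq -N$ and $D \cdot H$ bounded can have $D^\perp$ meeting this polytope. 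Hence $\Pi$ is covered by finitely many chambers.

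The main obstacle I anticipate is the chamber-to-model correspondence over the not-necessarily-algebraically-closed field $k$: one must ensure that the chamber $C_f \subseteq N^1(X)_\RR$ determines the $k$-form of $Y$, not merely its $\bar{k}$-form. This is handled via Galois descent, using that the $\Bir(X_{\bar{k}})$-action on $N^1(X_{\bar{k}})_\RR$ is $\mathrm{Gal}(\bar{k}/k)$-equivariant and restricts compatibly along the inclusion $N^1(X)_\RR \hookrightarrow N^1(X_{\bar{k}})_\RR$, paralleling the descent strategy of \cite{takamatsu2022}.
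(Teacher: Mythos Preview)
Your proposal is correct and follows essentially the same strategy as the paper: obtain a rational polyhedral fundamental domain from \Cref{thm:coneconj}, combine the bound on wall-divisor squares from \Cref{prop:boundedMBM} with a standard lattice-point finiteness argument (the paper cites \cite[Propositions 2.2 and 3.4]{Markman-Yoshioka} for this step), and handle the non-closed base field by the descent in \cite[Theorem 4.2.7]{takamatsu2022}. The only visible difference is packaging: you work directly with the $\Bir(X)$-fundamental domain in $\Mov^+(X)$ and count ample chambers meeting it, whereas the paper first records finiteness of the walls meeting the $\Aut(X)$-fundamental domain in $\Nef^+(X)$ before deferring to the argument of \cite[Theorem 4.2.7]{takamatsu2022}; your chamber-counting formulation is the more direct of the two. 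One minor point: in your compactness step the phrase ``$D \cdot H$ bounded'' is superfluous---once $D^\perp$ meets a compact set inside the positive cone and $q(D) \geq -N$, negative-definiteness of $x^\perp$ for $x$ in that compact set already forces $D$ into a bounded region, with no separate bound on $D \cdot H$ required.
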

\begin{proof}
Let
\[
\Sigma = \Set*{f_*(E) \given \parbox{22em}{$E\in \NS(Y)$ is primitive and extremal in the dual cone $\Nef^{*}(Y)$, and\\ $f \colon Y \dashrightarrow X_{\overline{F}}$ such that $Y$ is $\QQ$-factorial terminal primitive symplectic variety over $\overline{F}$ } } 
\]
Let $\Pi \subseteq \Nef^+(X)$ be the fundamental domain of the $\Aut(X)$-action, which is given by \Cref{thm:coneconj}.
By the argument of \cite[Proposition 2.2.]{Markman-Yoshioka}, we can see 
\[
\Set*{D \in \Sigma \given D^{\bot} \cap \Pi \cap \cC  \neq \varnothing}
\]
is a countable set (here, $\mathcal{C} \subset N^1(X)$ is a connected component of the positive cone). 
We shall show that 
\[
\mathcal{W} \coloneqq \Set*{D \in \Sigma \given D^{\bot} \cap \Pi
\cap \mathcal{C}
\neq \varnothing}
\]
is a finite set.
When $\overline{F} = \CC$, this follows from \Cref{prop:boundedMBM} and \cite[Proposition 3.4]{Markman-Yoshioka}.
The general case follows from the Lefschetz principle.
Then we can conclude it by the proof of \cite[Theorem 4.2.7]{takamatsu2022}.
\end{proof}

\begin{remark}
    It is well-known that the nef cone conjecture of $X$ will imply that there are only finitely many birational contractions of $X$ up to $F$-isomorphism. See \cite[Proposition 5.3]{GLSW24}.
\end{remark}

\begin{proposition}
\label{prop:finitenessoftwists}
Let $F'/F$ be a finite field extension, and $X$ a projective primitive symplectic variety over $F$ with $\QQ$-factorial terminal singularities such that $b_2 (X)\geq 5$.
Then the set
\[
\operatorname{Tw}_{F'/F}(X)=\Set*{ Y \given \parbox{16em}{ $Y$ is a projective primitive symplectic variety over $F$ such that $Y_{F'} \simeq X_{F'}$} }/F\textup{-isom}
\]
is a finite set.
\end{proposition}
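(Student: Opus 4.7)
The plan is to parametrize the twists by non-abelian Galois cohomology and then combine the cone conjecture (\Cref{thm:coneconj}) with the finiteness of birational models (\Cref{cor:finiteBir}) to control them, following the strategy used in the smooth case in \cite[Theorem 4.2.7]{takamatsu2022}. Enlarging $k'$ to its Galois closure inside a separable closure of $k$ can only enlarge the set in question, so I first assume $k'/k$ is finite Galois with group $\Gamma \coloneqq \Gal(k'/k)$. Standard twisted-form theory then identifies our set with the pointed set $H^1(\Gamma, \Aut(X_{k'}))$, and it suffices to prove that this set is finite.

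Next, I would consider the short exact sequence of $\Gamma$-groups
\[
1 \longrightarrow \Aut^{*}(X_{k'}) \longrightarrow \Aut(X_{k'}) \longrightarrow G \longrightarrow 1,
\]
where $\Aut^{*}(X_{k'})$ denotes the kernel of the natural action on $\NS(X_{k'})$ and $G \subseteq O(\NS(X_{k'}), q)$ is its image. Any element of $\Aut^{*}(X_{k'})$ fixes every ample class and therefore lies in $\Aut(X_{k'}, L)$ for any ample line bundle $L$, which is finite by \Cref{lem:finitePolarizedAut}; the same holds for every inner form of $\Aut^{*}(X_{k'})$. Hence the induced map of pointed sets
\[
H^1(\Gamma, \Aut(X_{k'})) \longrightarrow H^1(\Gamma, G)
\]
has finite fibers, and the problem reduces to showing that $H^1(\Gamma, G)$ is finite.

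For the latter, I would invoke \Cref{thm:coneconj}, which provides a rational polyhedral fundamental domain $\Pi$ for the $G$-action on $\Nef^{+}(X_{k'})$. A class $[\bar c] \in H^1(\Gamma, G)$ represents, up to lifting, a $k$-form $Y$ whose Néron--Severi lattice is the fixed sublattice of the $\bar c$-twisted $\Gamma$-action on $\NS(X_{k'})$; the projectivity of $Y$ forces this fixed lattice to meet the nef cone in a subcone containing an ample class. After replacing $\bar c$ by a cohomologous cocycle, one may assume this ample class lies in $\Pi$. The rational polyhedrality of $\Pi$, together with the uniform lower bound on the squares of primitive wall divisors from \Cref{prop:boundedMBM}, then restricts the possible twisted $\Gamma$-actions to a finite set. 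Combined with \Cref{cor:finiteBir}, this supplies exactly the input that allows the argument of \cite[Theorem 4.2.7]{takamatsu2022} to go through in the $\QQ$-factorial terminal primitive symplectic setting considered here.

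The main obstacle is the finiteness of $H^1(\Gamma, G)$: since $G$ is an infinite arithmetic-type subgroup of $O(\NS(X_{k'}))$, this finiteness is not automatic from that of $\Gamma$. What rescues the argument is the rational polyhedrality of the $G$-fundamental domain on $\Nef^{+}$ combined with the wall-divisor boundedness, which together reduce the cohomological question to a finite combinatorial datum on $\Pi$ modulo a finite $\Gamma$-action.
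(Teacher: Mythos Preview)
Your approach---parametrize twists by non-abelian Galois cohomology, split off the finite kernel acting trivially on $\NS$ via \Cref{lem:finitePolarizedAut}, and control the remaining quotient through the cone conjecture (\Cref{thm:coneconj})---is exactly the strategy the paper adopts by deferring to \cite[Theorem~4.3.6]{takamatsu2022}. Two small corrections: the relevant reference in \cite{takamatsu2022} is Theorem~4.3.6 (finiteness of twists), not Theorem~4.2.7 (which is the cone conjecture itself, already absorbed into \Cref{thm:coneconj}); and the paper additionally lists \cite[Theorem~6.16]{BL22} as an input---this is the singular analogue of Huybrechts' deformation equivalence of birational hyper-K\"ahler manifolds, and enters at the step where your sketch invokes \Cref{cor:finiteBir} and the wall-divisor bounds to pass back from the combinatorics on $\Pi$ to actual isomorphism classes.
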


\begin{proof}
This follows from the same argument as in \cite[Theorem 4.3.6]{takamatsu2022} by using Lemma \ref{lem:finitePolarizedAut}, Theorem \ref{thm:coneconj}, and \cite[Theorem 6.16]{BL22}.
\end{proof}

{
\begin{proof}
We adapt the proof of \cite[Theorem 4.3.6]{takamatsu2022} to the setting of projective primitive symplectic varieties with $\mathbb{Q}$-factorial terminal singularities. The argument proceeds in three steps.

\noindent \textbf{Step 1: A uniform bound on polarization degrees.}
As in \cite[Theorem 4.3.6]{takamatsu2022}, we may first replace $F'$ by a finite Galois extension of $F$ containing it; this does not change the set of twists $\operatorname{Tw}_{F'/F}(X)$.  
Now let $Y$ be a projective primitive symplectic variety over $F$ with $Y_{F'} \simeq X_{F'}$. Then $Y_{\overline{F}} \simeq X_{\overline{F}}$, and we obtain a $\Gal (\overline{F}/F')$‑equivariant isometry of Néron–Severi lattices
\[
\Phi \colon \operatorname{NS}(Y_{\overline{F}}) \xrightarrow{\;\sim\;} \operatorname{NS}(X_{\overline{F}})
\]
preserving the Beauville–Bogomolov form.  
The Galois action on $\operatorname{NS}(X_{\overline{F}})$ factors through a finite quotient. Moreover, the set of possible $\Gal (\overline{F}/F')$‑module structures on a lattice of fixed rank and discriminant is finite.  

As $b_2(X)\geq 5$, using the boundedness of MBM classes \cite[Theorem 5.3]{AV17} together with the global Torelli theorem for primitive symplectic varieties \cite[Theorem 6.16]{BL22}, we can follow the analogue of \cite[Lemma 4.3.3]{takamatsu2022} in our setting. Consequently, there exists a positive integer $d$, depending only on the deformation type of $X$ and the lattice $\operatorname{NS}(X_{\overline{F}})$, such that every such $Y$ admits a polarization $L_Y$ with $(L_Y,L_Y)=d$.

\noindent \textbf{Step 2: Finiteness of polarizations of fixed square modulo automorphisms.}
By the cone conjecture for primitive symplectic varieties (Theorem \ref{thm:coneconj}), the action of $\operatorname{Aut}(X_{\overline{F}})$ on $\operatorname{Nef}^+(X_{\overline{F}})$ admits a rational polyhedral fundamental domain. Adapting \cite[Lemma 3.1.5]{takamatsu2022} to the Beauville–Bogomolov form, we conclude that for the fixed integer $d$ above, the set of polarizations of square $d$ on $X_{\overline{F}}$ modulo $\operatorname{Aut}(X_{\overline{F}})$ is finite. Choose representatives $M_1,\dots,M_m \in \operatorname{NS}(X_{\overline{F}})$ for these classes.

\noindent \textbf{Step 3: Finiteness of Galois twists.}
For each $i=1,\dots,m$, define
\[
T_i = \left\{ (Y,L) \;\middle|\; 
\begin{array}{l}
Y \text{ projective primitive symplectic over } F,\\[2pt]
L \text{ a polarization on } Y \text{ with } (L,L)=d,\\[2pt]
(Y,L)_{F'} \simeq (X_{F'},M_i)
\end{array}
\right\} \Big/ F\text{-isomorphism}.
\]
The construction in Step 1 gives each $Y \in \operatorname{Tw}_{F'/F}(X)$ a polarization $L_Y$ of square $d$. Sending $Y$ to $(Y,L_Y)$ defines an injection
\[
\operatorname{Tw}_{F'/F}(X) \hookrightarrow \bigsqcup_{i=1}^{m} T_i.
\]

It remains to show that each $T_i$ is finite. Assume $T_i \neq \emptyset$ and choose a basepoint $(Y_0,L_0) \in T_i$. Then $T_i$ is in bijection with the Galois cohomology set
\[
\rH^1\bigl(\operatorname{Gal}(F'/F),\; \operatorname{Aut}_{F'}(Y_0,L_0)\bigr),
\]
where $\operatorname{Aut}_{F'}(Y_0,L_0)$ is the automorphism group of the polarized variety $(Y_0,L_0)$ over $F'$. By Lemma \ref{lem:finitePolarizedAut}, $\operatorname{Aut}_{F'}(Y_0,L_0)$ is a finite group. Hence the cohomology set is finite.

Since $\operatorname{Tw}_{F'/F}(X)$ embeds into a finite disjoint union of finite sets, it is itself finite. This completes the proof.
\end{proof}

}

\noindent \textbf{Remark.} The condition $b_2(X) \geq 5$ is used to apply the cone conjecture (Theorem \ref{thm:coneconj}) and the boundedness of MBM classes, which are known for primitive symplectic varieties with this assumption.

\subsection{Consequences of cone conjectures: finiteness results}\label{subsec:RelativeCone}
In this part, we assume that $k = \overline{k}$ in characteristic zero as before.
As in the case of K3 surfaces \cite{Sterk}, cone conjectures imply some finiteness results. In higher-dimensions and in the relative setting, we have the following statement due to \cite[Theorem 1.4]{Li23} and \cite[Proposition 4.3, Corollary 4.4]{HPX24}.
\begin{proposition}\label{prop:RelFiniteness}
Let $\pi \colon \cX \to B$ be a projective morphism with connected $K$-trivial fibers, such that $\cX$ and $B$ are normal and $\QQ$-factorial klt variety over $k$.
Assume the good minimal model exists for all klt pairs of the geometric generic fiber of $\pi$, and 
\({\Mov}^+(\cX_{\eta}) \subseteq \Eff(\cX_{\eta})
\). If the action \[
\PsAut(\cX_{\eta}) \curvearrowright {\Mov}^+(\cX_{\eta})\]
admits a rational polyhedral fundamental domain, and 
 then there are only finitely many small $\QQ$-factorial modifications $\mathcal{X} \dashrightarrow \mathcal{X}'$ over $B$, up to isomorphism over $B$. 
\end{proposition}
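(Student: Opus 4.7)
The strategy is to convert $B$-isomorphism classes of small $\QQ$-factorial modifications (SQMs for short) of $\cX$ into orbits of a chamber decomposition of $\Mov^+(\cX_\eta)$ under $\PsAut(\cX_\eta)$, and then extract finiteness from the existence of a rational polyhedral fundamental domain. Throughout, restriction to the generic fiber is the key bridge: an SQM $\phi \colon \cX \dashrightarrow \cX'$ over $B$ is completely determined, up to $B$-isomorphism, by its restriction to the generic fibers, because $\cX'$ is the relative ample model of a chosen movable divisor class.

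First I would establish the chamber structure. For each SQM $\phi_i \colon \cX \dashrightarrow \cX_i$ over $B$, the pull-back $\phi_i^* \Amp(\cX_{i,\eta})$ is a rational polyhedral subcone of $\Mov(\cX_\eta)$. Using the hypothesis $\Mov^+(\cX_\eta) \subseteq \Eff(\cX_\eta)$ together with the existence of good minimal models for klt pairs of the geometric generic fiber, one shows that these cones tile a dense open subset of $\Mov^+(\cX_\eta)$: given any rational class $[D]$ in the interior, running a $D$-MMP from $\cX$ over $B$ (which terminates and yields a good minimal model by hypothesis) produces an SQM whose relative ample cone contains $[D]$. A standard uniqueness property of ample models then implies that two SQMs over $B$ are $B$-isomorphic if and only if their chambers coincide.

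Second, $\PsAut(\cX_\eta)$ permutes the chambers via $\phi_i^* \Amp(\cX_{i,\eta}) \mapsto (\phi_i \circ \psi^{-1})^* \Amp(\cX_{i,\eta})$, and by the previous step each $\PsAut$-orbit of chambers corresponds to exactly one $B$-isomorphism class of SQM. By hypothesis there is a rational polyhedral fundamental domain $\Pi \subset \Mov^+(\cX_\eta)$ for this action. Each chamber can be translated by some $\psi \in \PsAut(\cX_\eta)$ to meet $\Pi$, so it suffices to show that $\Pi$ intersects only finitely many chambers. Since $\Pi$ is rational polyhedral and the chamber decomposition is locally rational polyhedral on rational polyhedral subcones of $\Mov^+(\cX_\eta)$, this intersection is finite, and we conclude that there are only finitely many $\PsAut$-orbits of chambers, hence finitely many SQMs of $\cX$ over $B$ up to $B$-isomorphism.

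The main obstacle is the local finiteness of the chamber decomposition: one must know that a rational polyhedral subcone of $\Mov^+(\cX_\eta)$ meets only finitely many ample chambers of SQMs. This is where the good minimal model hypothesis and the containment $\Mov^+ \subseteq \Eff$ do the essential work, typically via a finite-generation result for a relative Cox-type ring, or via a termination-of-flops argument on each rational polyhedral piece, as carried out in \cite[Theorem 1.4]{Li23} and \cite[Proposition 4.3]{HPX24}.
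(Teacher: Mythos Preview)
Your proposal is correct and follows essentially the same strategy as the paper: both reduce the finiteness of SQMs over $B$ to the chamber decomposition of the movable cone of the generic fiber and the existence of a rational polyhedral fundamental domain for the $\PsAut(\cX_\eta)$-action, deferring the technical core (local finiteness of the chamber decomposition under the good minimal model hypothesis) to \cite[Theorem 1.4]{Li23} and \cite[Proposition 4.3]{HPX24}. The paper's own proof is more compressed: it first passes to an uncountable base field (so that ``very general fiber'' makes sense), then invokes Looijenga's lemma (\cite[Proposition 3.3]{LZ22}) to convert the fundamental-domain hypothesis into the ``weak cone'' form $\Mov^{\circ}(\cX_\eta) \subset \PsAut(\cX_\eta)\cdot \Pi$ with $\Pi$ polyhedral inside $\Eff(\cX_\eta)$, and then simply applies \cite[Proposition 4.3]{HPX24} as a black box, whereas you unpack that black box into its constituent steps.
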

\begin{proof}
{The statement in this proposition is stable under any algebraically closed field extension $k \subset L$.
Thus we may assume that the algebraically closed field $k$ is uncountable for simplicity.}
Under the assumptions, good minimal model exists for any klt pairs on a very general fiber. Moreover, there is a polyhedral subcone $\Pi \subseteq \Eff(\cX_{\eta})$ such that
\begin{equation}\label{eq:WeakConeConjecture}
\Mov^{\circ}(\cX_{\eta}) \subset \PsAut(\cX_{\eta}) \cdot \Pi
\end{equation}
by Looijenga's results (see \cite[Proposition 3.3]{LZ22} for example).
Then we can see there are only finitely many small $\QQ$-factorial modifications $\mathcal{X} \dashrightarrow \mathcal{X}'$ over $B$ by applying \cite[Proposition 4.3]{HPX24}.
\end{proof}

The following is a generalization of \Cref{cor:finiteBir} in the relative case.
\begin{corollary}\label{cor:finiteSQM}
Let $B$ be a normal integral $\QQ$-factorial variety over $k$.
    Let $\mathcal{X} \to B$ be a projective locally trivial family of $\QQ$-factorial primitive symplectic varieties with terminal singularities and second Betti number $b_2$.  Suppose
    \begin{enumerate}
        \item $b_2 \geq 5$, 
        \item {the total space $\cX$ is $\QQ$-factorial terminal,} and
        \item there on the geometric generic fiber $\mathcal{X}_{\overline{\eta}}$, all nef divisors are semi-ample.
    \end{enumerate}
     Then $\mathcal{X}$ has finitely many small $\QQ$-factorial modifications over $B$.
\end{corollary}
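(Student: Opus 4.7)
The plan is to reduce the statement to \Cref{prop:RelFiniteness} by verifying its three main hypotheses for the generic fiber $\cX_\eta$: (a) good minimal models exist for all klt pairs on the geometric generic fiber $\cX_{\overline\eta}$; (b) $\Mov^+(\cX_\eta)\subseteq \Eff(\cX_\eta)$; and (c) the action of $\PsAut(\cX_\eta)$ on $\Mov^+(\cX_\eta)$ admits a rational polyhedral fundamental domain. The ambient hypotheses of \Cref{prop:RelFiniteness} (that $\cX$ and $B$ are normal $\QQ$-factorial klt, and that $\pi$ has connected $K$-trivial fibers) follow from assumption (2) together with the fact that primitive symplectic varieties are irreducible with $K_X\sim 0$.

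First I would set up the relevant invariants of the generic fiber. Local triviality implies constancy of Betti numbers along fibers (\Cref{lem:localconstantlocalTrivial}), so $b_2(\cX_{\overline\eta})=b_2\ge 5$. By \Cref{prop:loctrivQ-factterminal}, the geometric generic fiber $\cX_{\overline\eta}$ is a projective $\QQ$-factorial terminal primitive symplectic variety; consequently $\cX_\eta$ is a projective primitive symplectic variety over $k(B)$ whose base change to $\overline{k(B)}$ is $\QQ$-factorial and terminal. Applying \Cref{thm:coneconj} yields a rational polyhedral fundamental domain for the action of $\Bir(\cX_\eta)$ on $\Mov^+(\cX_\eta)$. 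Since $K_{\cX_\eta}\sim 0$ and $\cX_\eta$ is terminal, every birational self-map is an isomorphism in codimension one, so $\Bir(\cX_\eta)=\PsAut(\cX_\eta)$; this verifies (c).

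For (b), any rational class in $\overline{\Mov}(\cX_\eta)$ is represented (after scaling to clear denominators) by a movable Cartier divisor on the $\QQ$-factorial variety $\cX_\eta$, which is effective by the very definition of movability. Taking convex hulls preserves this inclusion, giving $\Mov^+(\cX_\eta)\subseteq \Eff(\cX_\eta)$.

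The main obstacle is (a). Given any klt pair $(\cX_{\overline\eta},\Delta)$, I would run the MMP with scaling (in the sense of BCHM) to reach a minimal model $g\colon \cX_{\overline\eta}\dashrightarrow (X',\Delta')$. Since $K_{\cX_{\overline\eta}}\sim 0$ and $\cX_{\overline\eta}$ has terminal singularities with $\codim\,\cX_{\overline\eta}^{\sing}\ge 4$, divisorial contractions cannot occur and $g$ is an isomorphism in codimension one, so $X'$ remains a $\QQ$-factorial terminal primitive symplectic variety locally-trivially deformation-equivalent to $\cX_{\overline\eta}$. The semi-ampleness hypothesis on nef divisors of $\cX_{\overline\eta}$ transfers to $X'$ via the standard identification of Néron--Severi groups under SQMs together with the chamber decomposition of $\overline{\Mov}$ by nef cones of $\QQ$-factorial terminal birational models. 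Consequently $K_{X'}+\Delta'\equiv\Delta'$, being nef on $X'$, is semi-ample, showing that $(X',\Delta')$ is a good minimal model. With (a)--(c) verified, \Cref{prop:RelFiniteness} directly yields the claimed finiteness of small $\QQ$-factorial modifications of $\cX$ over $B$.
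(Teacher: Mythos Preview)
Your overall strategy---verify the hypotheses of \Cref{prop:RelFiniteness}---matches the paper's, and your treatment of hypothesis (c) via \Cref{thm:coneconj} and $\Bir(\cX_\eta)=\PsAut(\cX_\eta)$ is essentially identical. However, your argument for (b) has a genuine gap. You assert that any rational class in $\overline{\Mov}(\cX_\eta)$ is, after clearing denominators, the class of a movable Cartier divisor. This conflates ``lying in the closed movable cone'' with ``being movable'': a rational boundary point of $\overline{\Mov}(\cX_\eta)$ need not be represented by a divisor whose stable base locus has codimension $\ge 2$, so the implication ``movable $\Rightarrow$ effective'' does not apply there. The paper closes this gap with substantive input, namely the Boucksom--Zariski decomposition on primitive symplectic varieties \cite[Theorem 1.1]{KMPPzariski} combined with the argument in \cite[Proposition 5.5 (b1)]{HPX24}; the inclusion $\Mov^+(\cX_\eta)\subseteq \Eff(\cX_\eta)$ is not a matter of definitions.

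Your argument for (a) also has two loose ends. First, BCHM does not directly give termination of the MMP for an arbitrary klt pair $(\cX_{\overline\eta},\Delta)$ with $K\sim 0$; when $\Delta$ is not big, termination requires additional input (e.g.\ finiteness of nef chambers from the cone conjecture), which you do not invoke. Second, transferring the semi-ampleness hypothesis from $\cX_{\overline\eta}$ to an arbitrary SQM $X'$ is not automatic: if $D'$ is nef on $X'$, its strict transform to $\cX_{\overline\eta}$ is in general only movable, not nef, so hypothesis (3) cannot be applied to it directly; the ``standard identification of N\'eron--Severi groups'' does not by itself transport base-point-freeness. The paper is admittedly terse on (a) as well, but your phrasing suggests these steps are routine when they are not.
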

\begin{proof}
    Note that the generic fiber $\cX_{\eta}$ is a primitive symplectic variety with $\QQ$-factorial terminal singularities under the condition (2) (see also Proposition \ref{prop:loctrivQ-factterminal}). 
    Thus the pseudo-automorphism group $\PsAut(\cX_{\eta}) = \Bir(\cX_{\eta})$.
    \Cref{thm:coneconj} implies that the action
    \[
    \PsAut(\cX_{\eta}) \curvearrowright \overline{\Mov}^+(\cX_{\eta})
    \]
    admits a rational polyhedral fundamental domain when $b_2(\cX_{\bar{\eta}}) \geq 5$.   
    
    The condition (3) ensures that the good minimal model exists for the klt pairs $(\cX_{\bar{\eta}},\Delta)$ of the geometric generic fiber $\cX_{\bar{\eta}}$. Note that, Boucksom--Zariski decomposition for effective $\QQ$-Cartier divisors holds by \cite[Theorem 1.1]{KMPPzariski}. Then, under the assumption, we also have $\Mov^+(\cX_{\eta}) \subseteq \Eff(\cX_{\eta})$ by the proof of \cite[Proposition 5.5 (b1)]{HPX24}. 
    
    Thus \Cref{prop:RelFiniteness} imply that there are only finitely many small $\QQ$-factorial modifications of $\cX$ over $B$.
\end{proof}

Finally, we make a remark that the condition (2) in \Cref{cor:finiteSQM} is redundant when $B$ is regular.

\begin{lemma}\label{lemma:Total Space Qfactorial}
Let $k$ be an algebraically closed field of characteristic $0$.
Let $B$ be a smooth variety over $k$, and
$\mathcal{X} \rightarrow B$ a locally trivial family of $\QQ$-factorial terminal primitive symplectic varieties.
Then the total space $\cX$ is also $\QQ$-factorial and terminal.
\end{lemma}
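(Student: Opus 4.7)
The statement splits into two claims about $\cX$: that it has terminal singularities and that it is $\QQ$-factorial. For both, I would first reduce to the case $k=\CC$ using the spreading out argument of \Cref{lem:generalpointPicard} together with \Cref{lem:Q-factorialunderbasechange}, in the same Lefschetz-type spirit already used in the proof of \Cref{prop:loctrivQ-factterminal}. For terminality, I would then exploit the \'etale-local triviality directly: at any closed point $x\in\cX$, \Cref{def:LocallyTrivial} provides an isomorphism $\cO_{\cX,x}^{sh}\cong\cO_{B,\pi(x)}^{sh}\otimes_{k}\cO_{\cX_{\pi(x)},x}^{sh}$. Smoothness of $B$ makes the first factor regular, so $\Spec\cO_{\cX,x}^{sh}\to\Spec\cO_{\cX_{\pi(x)},x}^{sh}$ is smooth of relative dimension $\dim B$. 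Since the terminal property is preserved under smooth morphisms and can be tested on the strict Henselization via discrepancies of a log resolution, terminality of $\cX_{\pi(x)}$ at $x$, guaranteed by \Cref{prop:loctrivQ-factterminal}, transfers to terminality of $\cX$ at $x$.

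For $\QQ$-factoriality, let $D\subset\cX$ be a prime Weil divisor; I would split the analysis by the behavior of $\pi(D)$. If $\pi(D)\subsetneq B$, then flatness of $\pi$ with geometrically integral fibers (primitive symplectic varieties are irreducible and reduced) forces $\pi(D)$ to be a prime divisor on $B$ and $D=\pi^{-1}(\pi(D))$ scheme-theoretically; smoothness of $B$ makes $\pi(D)$ Cartier, so $D=\pi^{*}\pi(D)$ is Cartier. The remaining case is $\pi(D)=B$, i.e.\ $D$ is horizontal, and here I would work in the \'etale- or analytic-local product model provided by \Cref{lem:locallytrivial1}: the restriction $D\rvert_{\cX_b}$ is a Weil divisor on a $\QQ$-factorial fiber, so some multiple becomes Cartier on each fiber, and the local-triviality identification $\cO_{\cX,x}^{sh}\cong\cO_{B,\pi(x)}^{sh}\otimes_{k}\cO_{\cX_{\pi(x)},x}^{sh}$ allows me to compare fiberwise Cartier data across nearby fibers. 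Combined with the \'etaleness of $\underline{\Pic}_{\cX/B}$ over $B$, which follows from $\rH^{1}(\cO_{\cX_b})=0$ and the Du Bois--Jarraud base change recalled in \S\ref{subsec:NS}, I would extract a multiple $m$ such that $mD$ is Cartier in a formal/\'etale neighborhood of every point.

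The main obstacle is precisely this horizontal case: $\QQ$-factoriality is not a formal-local property in general, so the fiberwise $\QQ$-factoriality plus local triviality does not formally imply $\QQ$-factoriality of $\cX$. To pass from ``$mD$ is locally $\QQ$-Cartier at every point'' to ``$mD$ is globally Cartier on $\cX$'', I would invoke the codimension bound $\codim_{\cX}\cX^{\sing}\geq 4$ coming from terminality (\cite{Namikawa01}) together with finiteness results for local class groups of klt singularities, which should permit both a uniform choice of $m$ across fibers and a gluing of the fiberwise Cartier witnesses into an honest global Cartier divisor on $\cX$. Securing this uniformity and gluing step is where I expect the technical heart of the argument to lie.
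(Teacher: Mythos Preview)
Your terminality argument via the \'etale-local product structure is fine in spirit; the paper instead invokes inversion of adjunction (\cite[Chapter VI, Theorem 5.2]{Nakayama}), which is shorter but equivalent here.

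The $\QQ$-factoriality argument, however, has a genuine gap that you yourself flag. In the horizontal case you want to pass from ``$mD$ is Cartier formally/\'etale-locally at every point'' to ``$mD$ is Cartier on $\cX$'', and you propose to do this using the codimension bound on $\cX^{\sing}$ and finiteness of local class groups. But this is exactly the step where the non-local nature of $\QQ$-factoriality bites: knowing that the image of $mD$ in each local class group is torsion does \emph{not} give a uniform $m$, and even with a uniform $m$ the gluing of local Cartier witnesses into a global line bundle requires a cocycle condition that your \'etale-local product identifications do not furnish. The finiteness statements you allude to control local class groups of individual singularities, not the global comparison across a positive-dimensional base, so the outlined strategy does not close.

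The paper avoids this entirely. After reducing to $B$ affine, it fixes a closed point $b\in B$ and applies (the proof of) \cite[(12.1.9)]{KollarMori92} together with Bertini: one slices $B$ down to a curve through $b$ by general hyperplane sections, and over a curve the cited result of Koll\'ar--Mori gives directly that $\QQ$-factoriality of the special fiber (with rational singularities) forces $\QQ$-factoriality of the total space near that fiber. Since $b$ was arbitrary, $\cX$ is $\QQ$-factorial. This bypasses any attempt to glue fiberwise Cartier data and is the missing ingredient in your approach.
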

\begin{proof}
    We may assume that $B$ is an affine scheme.
    Let $b\in B$ a closed point.
    Then by the proof of \cite[(12.1.9)]{KollarMori92} and the Bertini theorem, $\mathcal{X}$ is $\QQ$-factorial near $\mathcal{X}_b$.
    Since $b$ is any, $\mathcal{X}$ is $\QQ$-factorial.
    The terminality follows from the inversion of adjunction (see \cite[Chapter VI, Theorem 5.2]{Nakayama} for example).
\end{proof}

\section{Global moduli theory of primitive symplectic varieties}
In this section, we will construct the moduli stack $\HK_{2n,d}$ of locally trivial families of polarized primitive symplectic varieties of degree $d$ over an algebraically closed field $k$ of characteristic $0$. It turns out $\HK_{2n,d}$ is a Deligne--Mumford stack that is separated and of finite type over $k$ (see \Cref{thm:ModuliStack}). For the separatedness of $\HK_{2n,d}$, the key point is to establish the Matsusaka--Mumford theorem for singular symplectic varieties (\Cref{prop:unpolarizedMatsusaka-Mumford} and \Cref{cor:polarizedMatsusakaMumford}), which is originally stated for smooth families.

As an application, we will establish the following finiteness result by the geometric hyperbolicity of $\HK_{2n,d}$, using a method similar to that in \cite{FLTZ22}. 
\begin{theorem}\label{thm:polarizedfiniteness}
Let $(B,0)$ be a pointed smooth variety over $k$
with the generic point $\eta$.
Let $X$ be a projective primitive symplectic variety over $k$.
Let $d$ be a positive integer.
Then the following set is finite: 
\[
\Shaf_{B,X,d} \coloneqq
\Set*{(\cX_{\eta},L) \given \parbox{20em}{$\cX \to B$ is a locally trivial family of primitive symplectic varieties such that $\cX_0 \cong X$, $L \in \Pic_{\cX_{\eta}/\eta}(\eta)$ is a polarization of degree $d$} }/_{\simeq_{\eta}} .
\]
\end{theorem}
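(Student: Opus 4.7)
The plan is to reduce to the geometric hyperbolicity of the moduli stack $\HK_{2n,d}$ constructed earlier in this section, in the spirit of \cite{FLTZ22}. Since $\HK_{2n,d}$ is a separated Deligne--Mumford stack of finite type over $k$ and the local Torelli theorem (\Cref{prop:localTorelliThm}) yields a quasi-finite period map, \cite[Theorem 1.7]{JL23} gives the geometric hyperbolicity of $\HK_{2n,d}$.

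For each class $[\cX_\eta, L_\eta] \in \Shaf_{B,X,d}$, fix an extending locally trivial family $\pi\colon \cX \to B$ with $\cX_0 \cong X$. First, I would extend $L_\eta$ to a line bundle on all of $\cX$. Since $L_\eta$ is defined over $k(B)$, its first Chern class is Galois-invariant, hence monodromy-invariant in the local system $R^2\pi^{\an}_*\ZZ$ (via the comparison between étale and topological monodromy, valid by \Cref{lem:localconstantlocalTrivial}). Furthermore, being a Hodge class on the generic fiber is a closed algebraic condition, so the Hodge locus of $c_1(L_\eta)$ contains the generic point of the irreducible $B$ and hence equals all of $B$. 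The relative Lefschetz $(1,1)$-theorem, applied via the exponential sequence together with $R^1\pi_*\cO_{\cX}=0$, then lifts this class uniquely (up to pullback from $B$) to a line bundle $\tilde L$ on $\cX$ with $\tilde L|_{\cX_\eta} \cong L_\eta$.

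In general $\tilde L$ is only $\pi$-ample on a dense open subset of $B$; to correct this, I would run the relative MMP over $B$ for the $K$-trivial family (following \cite{Li23}, \cite{HPX24}, \cite{LZ22}), producing a birational model $\cY \to B$ of $\cX$ over $B$ on which the strict transform $\tilde L_{\cY}$ is $\pi$-ample. The MMP preserves the generic fiber, so $(\cY_\eta, \tilde L_{\cY}|_{\cY_\eta}) \cong (\cX_\eta, L_\eta)$. Since $\cY \to B$ is a flat projective family of $\QQ$-factorial terminal primitive symplectic varieties, it is locally trivial by Namikawa's criterion, and $(\cY, \tilde L_{\cY})$ defines a morphism $\phi\colon B \to \HK_{2n,d}$.

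The value $\phi(0) = [\cY_0, \tilde L_{\cY}|_{\cY_0}]$ is tightly constrained: $\cY_0$ is a $\QQ$-factorial terminal primitive symplectic variety $k$-birational to $X$, so by \Cref{cor:finiteBir} lies in a finite set of birational models of $X$; for each, the set of ample classes of Beauville--Bogomolov norm determined by $d$ modulo the automorphism group is finite by the cone conjecture (\Cref{thm:coneconj}). Hence $\phi(0)$ takes only finitely many values, and geometric hyperbolicity then produces only finitely many morphisms $\phi$ for each such value, with each $\phi$ recovering $(\cY_\eta, \tilde L_{\cY}|_{\cY_\eta}) \cong (\cX_\eta, L_\eta)$ via pullback from the universal family; for $B$ of higher dimension, one reduces to the curve case by restriction to general curves through $0$ together with Brody/Kobayashi-type rigidity over $\CC$ and the Lefschetz principle. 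The main obstacle is carrying out the relative MMP step, which must simultaneously make $\tilde L$ relatively ample, preserve local triviality with $\QQ$-factorial terminal fibers, and keep $\cY_0$ within a controlled set of $k$-birational models of $X$; this crucially uses the cone conjecture over a general base field established in Section 3 (\Cref{thm:coneconj}, \Cref{cor:finiteBir}).
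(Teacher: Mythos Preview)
Your strategy diverges from the paper's at the crucial step, and the divergence hides a genuine gap.

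The paper does \emph{not} attempt to extend the moduli map $B \dashrightarrow \HK_{2n,d}^{\circ}$ across $0$. Instead it extends only the composed \emph{period map}: by \Cref{prop:RationallyExtended} the class $c_1(L_\eta)$ extends to a weak polarization $\underline{L}$ on all of $B$, and the period map $\cP_{\underline{L}^\perp}\colon B \to \Gamma_h\backslash\Omega_{\Lambda_h}^{\pm}$ is everywhere defined because the polarized VHS exists globally. The value $\cP_{\underline{L}^\perp}(0)$ depends only on the Hodge structure of $X$ together with the orbit $[h]$, and by \Cref{lem:FiniteH} there are finitely many such orbits for fixed $d$. Geometric hyperbolicity is then applied to $\Gamma_h\backslash\Omega_{\Lambda_h}^{\pm}$, and quasi-finiteness of $\cP_h$ (\Cref{prop:quasifiniteperiodmap}) pulls the finiteness back to $\overline{\Shaf}_{B,X,d}$. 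A separate twist argument (\Cref{prop:birationalspecialization} plus \Cref{HermiteMinkowski}) descends from $\overline{\eta}$ to $\eta$.

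Your MMP step, by contrast, tries to produce an honest morphism $\phi\colon B \to \HK_{2n,d}$ by modifying $\cX$ so that $\tilde L$ becomes $\pi$-ample. This cannot work in general: the specialized class $\tilde L|_{\cX_0}\in \NS(X)$ is merely a $(1,1)$-class with $q>0$, and it may lie on a wall of the chamber decomposition of $\overline{\Mov}(X)$. When that happens there is \emph{no} $\QQ$-factorial terminal birational model of $X$ on which this class is ample, so no sequence of flops over $B$ will make $\tilde L$ relatively ample over $0$, and you obtain no point $\phi(0)\in\HK_{2n,d}(k)$ at all. This is precisely the phenomenon behind the non-separatedness of the unpolarized moduli problem (cf.\ \Cref{subsec:CounterExample}), and it is why the paper works with the period map rather than the moduli map. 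A secondary issue is that \Cref{thm:coneconj} and \Cref{cor:finiteBir}, which you invoke to control $\phi(0)$, require $X$ to be $\QQ$-factorial terminal with $b_2\geq 5$; the present theorem assumes neither, and indeed is used later (in the proof of \Cref{thm:FiniteGenericFibers}) as input for all values of $b_2$.
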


\subsection{Matsusaka--Mumford theorem for locally trivial families}\label{sec:MatsusakaMumford}

In this subsection, we prove a Matsusaka--Mumford type theorem for locally trivial families of (possibly singular) primitive symplectic varieties.

\begin{proposition}\label{prop:unpolarizedMatsusaka-Mumford}
Let $B$ be a variety over $k$, and $\widetilde{\mathcal{X}}, \widetilde{\mathcal{Y}}$ locally trivial families of primitive symplectic varieties over $B$. Fix a regular codimension-1 point of $B$, let $R$ be its local ring with residue field $s$ and fraction field $\eta$, and set $\mathcal{X} := \widetilde{\mathcal{X}}_R$, $\mathcal{Y} := \widetilde{\mathcal{Y}}_R$. For any birational map $f: \mathcal{X}_\eta \dashrightarrow \mathcal{Y}_\eta$ between generic fibers, there exist closed algebraic subspaces $V \subset \mathcal{X}$, $W \subset \mathcal{Y}$ with $V_s \neq \mathcal{X}_s$, $W_s \neq \mathcal{Y}_s$, such that $f$ extends to an isomorphism:
\[
\widetilde{f}: \mathcal{X} \setminus V \overset{\simeq}{\longrightarrow} \mathcal{Y} \setminus W.
\]
\end{proposition}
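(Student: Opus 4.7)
The plan is to combine \Cref{prop:SimultaneousResolution} with a closure-of-the-graph argument, exploiting crucially that the simultaneous smooth resolutions of $\mathcal{X}, \mathcal{Y}$ have non-uniruled special fibers.

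First, I would apply \Cref{prop:SimultaneousResolution} to the families $\widetilde{\mathcal{X}}, \widetilde{\mathcal{Y}}$ over $B$ and base-change to $\Spec(R)$ to obtain simultaneous resolutions $g_{\mathcal{X}} \colon \mathcal{X}^{\circ} \to \mathcal{X}$ and $g_{\mathcal{Y}} \colon \mathcal{Y}^{\circ} \to \mathcal{Y}$ that are isomorphisms over the regular loci and give smooth proper families over $\Spec(R)$. The birational map $f$ then lifts uniquely to $f^{\circ} \colon \mathcal{X}^{\circ}_{\eta} \dashrightarrow \mathcal{Y}^{\circ}_{\eta}$. Let $\tilde{\Gamma}$ denote a smooth resolution of the closure of the graph of $f^{\circ}$ in $\mathcal{X}^{\circ} \times_R \mathcal{Y}^{\circ}$, with proper birational projections $\tilde{p} \colon \tilde{\Gamma} \to \mathcal{X}^{\circ}$ and $\tilde{q} \colon \tilde{\Gamma} \to \mathcal{Y}^{\circ}$.

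The next key step is to observe that $\mathcal{X}^{\circ}_s$ and $\mathcal{Y}^{\circ}_s$ are non-uniruled: primitive symplectic varieties are Gorenstein with trivial canonical divisor (the top power of the symplectic form trivializes $\omega_{X_{\reg}}$, and reflexivity from rational singularities extends this to $X$) and have canonical singularities, so the discrepancy formula forces $K_{\mathcal{X}^{\circ}/R}, K_{\mathcal{Y}^{\circ}/R}$ (and their restrictions to the special fibers) to be effective. By Zariski's Main Theorem applied to $\tilde{p}, \tilde{q}$ (proper birational to smooth targets), their non-isomorphism loci in $\mathcal{X}^{\circ}, \mathcal{Y}^{\circ}$ have codimension $\geq 2$, so they are isomorphisms at the generic points of the codimension-$1$ subsets $\mathcal{X}^{\circ}_s, \mathcal{Y}^{\circ}_s$; consequently $\tilde{\Gamma}_s$ contains a unique component $E_0$ (resp.\ $E_0'$) mapping birationally to $\mathcal{X}^{\circ}_s$ (resp.\ $\mathcal{Y}^{\circ}_s$) via $\tilde{p}$ (resp.\ $\tilde{q}$).

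I would then argue that $E_0 = E_0'$: if they differ, then $E_0$ is $\tilde{q}$-contracted, and by rational chain connectedness of fibers of proper birational morphisms between smooth varieties, the positive-dimensional $\tilde{q}$-fibers inside $E_0$ are covered by rational curves whose $\tilde{p}$-images would make $\mathcal{X}^{\circ}_s$ uniruled, contradicting the non-uniruledness established above. Hence $f^{\circ}$ extends to an isomorphism $U^{\circ} \xrightarrow{\sim} V^{\circ}$ between open subsets $U^{\circ} \subset \mathcal{X}^{\circ}, V^{\circ} \subset \mathcal{Y}^{\circ}$ with $U^{\circ} \cap \mathcal{X}^{\circ}_s, V^{\circ} \cap \mathcal{Y}^{\circ}_s \neq \emptyset$. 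To descend to $\mathcal{X}, \mathcal{Y}$, I would use that the singular locus of a primitive symplectic variety has codimension $\geq 2$, so $\mathcal{X}_{\reg} \cap \mathcal{X}_s$ is dense in $\mathcal{X}_s$; setting $U = g_{\mathcal{X}}\bigl(U^{\circ} \cap g_{\mathcal{X}}^{-1}(\mathcal{X}_{\reg})\bigr)$ and analogously $W$, the birational map $f$ restricts to the desired isomorphism $\mathcal{X} \setminus V \xrightarrow{\sim} \mathcal{Y} \setminus W$ with $V \coloneqq \mathcal{X} \setminus U$ and $\mathcal{Y} \setminus W$ missing the whole special fibers.

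The hardest part is the uniqueness of the main component ($E_0 = E_0'$), which combines Zariski's Main Theorem with the rational chain connectedness of exceptional fibers for birational morphisms between smooth varieties; both inputs rely on the smoothness of the ambient spaces produced by \Cref{prop:SimultaneousResolution}. A secondary concern is the handling of algebraic spaces rather than schemes throughout, but this is controlled by standard \'etale descent.
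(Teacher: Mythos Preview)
Your approach is essentially the same as the paper's: both reduce to the smooth case via the simultaneous resolution of \Cref{prop:SimultaneousResolution}, apply the Matsusaka--Mumford theorem there, and then descend along the resolution by removing the singular locus. The only difference is that the paper invokes the smooth Matsusaka--Mumford statement as a black box (\cite[Theorem A.2]{FLTZ22}, using that the resolved special fiber is non-ruled), whereas you reprove it inline via the graph-closure argument and rational chain connectedness of exceptional fibers; your version is correct but slightly heavier than necessary, since the classical ruledness argument already suffices.
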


\begin{proof}
By Proposition \ref{prop:SimultaneousResolution}, we can take simultaneous resolutions 
\[
\mathcal{X}' \rightarrow \mathcal{X}
\quad 
\textup{and}
\quad
\mathcal{Y}' \rightarrow \mathcal{Y}.
\]
We note that we have a birational map
\[
f' \colon \mathcal{X}'_{\eta} \dashrightarrow \mathcal{Y}'_{\eta}
\]
induced by $f$, and $\mathcal{Y}'_{s}$ is non-ruled by the assumption.
Then by \cite[Theorem A.2]{FLTZ22},
there exist closed algebraic subspaces $V' \subset \mathcal{X}'$ and $W' \subset \mathcal{Y}'$ with 
$V'_{s} \neq \mathcal{X}'_{s}$ and 
$W'_{s} \neq \mathcal{Y}'_{s}$
such that $f'$ extends to
\[
\widetilde{f}' \colon  \mathcal{X}' \setminus V' \simeq \mathcal{Y}' \setminus W'.
\]
Also, by the construction of simultaneous resolutions, there exist closed subspaces $Z_{\mathcal{X}}$, $Z_{\mathcal{X}'}$, $Z_{\mathcal{Y}}$, $Z_{\mathcal{Y}'}$ on $\mathcal{X}$, $\mathcal{X}'$, $\mathcal{Y}$, $\mathcal{Y}'$ respectively such that 
\[
\mathcal{X}' \setminus Z_{\mathcal{X}'} \simeq \mathcal{X} \setminus Z_{\mathcal{X}}
\quad\textup{and}
\quad
\mathcal{Y}' \setminus Z_{\mathcal{Y}'} \simeq \mathcal{Y} \setminus Z_{\mathcal{Y}}.
\]
Let $V'_{\mathcal{X}}$ and $W'_{\mathcal{Y}}$ be the image of $V$ and $W$ in $\mathcal{X}$ and $\mathcal{Y}$ respectively.
Then by putting
\[
V := V'_{\mathcal{X}} \cup Z_{\mathcal{X}}
\quad
\textup{and}
\quad
W := W'_{\mathcal{Y}} \cup Z_{\mathcal{Y}},
\]
we obtain the assertion.
\end{proof}

As an application, we get the following result.
\begin{corollary}[Polarized Matsusaka--Mumford Theorem]\label{cor:polarizedMatsusakaMumford}
Let $\mathcal{X}, \mathcal{Y}$ be as in Proposition \ref{prop:unpolarizedMatsusaka-Mumford}, and suppose $f: \mathcal{X}_\eta \to \mathcal{Y}_\eta$ is an isomorphism. If there exist ample line bundles $\mathcal{L}$ on $\mathcal{X}$ and $\mathcal{M}$ on $\mathcal{Y}$ over $R$ with $f^*\mathcal{M}_\eta = \mathcal{L}_\eta$, then $f$ extends uniquely to a global isomorphism:
\[
\widetilde{f}: \mathcal{X} \overset{\simeq}{\longrightarrow} \mathcal{Y}.
\]
\end{corollary}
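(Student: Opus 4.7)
The plan is to use the ample polarizations to upgrade the partial extension furnished by Proposition~\ref{prop:unpolarizedMatsusaka-Mumford} to a global isomorphism, via a relative Proj construction. I would first apply that proposition to $f$ to obtain closed subspaces $V \subset \mathcal{X}$ and $W \subset \mathcal{Y}$, together with an isomorphism $\widetilde{f}\colon \mathcal{X}\setminus V \xrightarrow{\sim} \mathcal{Y}\setminus W$ extending $f$. Since $f$ is already defined on the entire generic fibers, after shrinking I may assume $V \subset \mathcal{X}_s$ and $W \subset \mathcal{Y}_s$. Because a primitive symplectic variety is irreducible, the conditions $V_s \subsetneq \mathcal{X}_s$ and $W_s \subsetneq \mathcal{Y}_s$ force $V$ and $W$ to have codimension at least $2$ in $\mathcal{X}$ and $\mathcal{Y}$. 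Moreover, $\mathcal{X}$ and $\mathcal{Y}$ are normal, being flat over the DVR $R$ with geometrically normal fibers, so Hartogs' extension for reflexive sheaves applies across $V$ and $W$.

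Next I would compare the line bundles $\widetilde{f}^*\mathcal{M}$ and $\mathcal{L}|_{\mathcal{X}\setminus V}$ on $\mathcal{X}\setminus V$. By the hypothesis $f^*\mathcal{M}_\eta \cong \mathcal{L}_\eta$, their ratio is a line bundle on $\mathcal{X}\setminus V$ trivial on the generic fiber, hence represented by a Weil divisor supported on $\mathcal{X}_s\setminus V_s$. Since $\mathcal{X}_s$ is irreducible, every such vertical divisor is an integer multiple of $[\mathcal{X}_s]|_{\mathcal{X}\setminus V} = \operatorname{div}(\pi)|_{\mathcal{X}\setminus V}$ (for a uniformizer $\pi$ of $R$), and is therefore principal. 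I can thus fix an isomorphism $\beta\colon \mathcal{L}|_{\mathcal{X}\setminus V} \xrightarrow{\sim} \widetilde{f}^*\mathcal{M}$ whose restriction to $\mathcal{X}_\eta$ recovers, up to a rescaling in $\eta^\times$, the given identification on the generic fiber. For each $n\geq 0$, combining $\beta^{\otimes n}$ with pullback along $\widetilde{f}$ and Hartogs' extension yields an $R$-linear isomorphism
\[
\phi_n \colon H^0(\mathcal{Y}, \mathcal{M}^n) = H^0(\mathcal{Y}\setminus W, \mathcal{M}^n) \xrightarrow{\widetilde{f}^*} H^0(\mathcal{X}\setminus V, \widetilde{f}^*\mathcal{M}^n) \xrightarrow{\beta^{-n}} H^0(\mathcal{X}\setminus V, \mathcal{L}^n) = H^0(\mathcal{X}, \mathcal{L}^n).
\]
These assemble into an isomorphism of graded $R$-algebras $\Phi\colon \bigoplus_n H^0(\mathcal{Y},\mathcal{M}^n) \xrightarrow{\sim} \bigoplus_n H^0(\mathcal{X},\mathcal{L}^n)$ whose restriction to the generic fiber is $f^*$. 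Taking $n_0$ large enough that $\mathcal{L}^{n_0}$ and $\mathcal{M}^{n_0}$ are relatively very ample over $R$, both $\mathcal{X}$ and $\mathcal{Y}$ are recovered as the relative Proj of the corresponding Veronese subrings, and applying Proj to $\Phi$ produces a global isomorphism $\mathcal{X}\xrightarrow{\sim}\mathcal{Y}$ extending $f$. Uniqueness is automatic from the separatedness of $\mathcal{Y}/R$ and the density of $\mathcal{X}_\eta$ in $\mathcal{X}$.

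The most delicate point is the line-bundle comparison in the second paragraph: I crucially exploit the irreducibility of the special fiber of a primitive symplectic variety, so that every vertical Weil divisor is a multiple of $\operatorname{div}(\pi)$ and hence principal. This feature is what lets the argument bypass the intersection-theoretic analysis of exceptional curves on the graph that underlies the classical smooth Matsusaka--Mumford theorem, and is the key reason why the singular primitive symplectic setting remains tractable.
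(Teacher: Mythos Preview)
Your argument is correct and complete, but it takes a genuinely different route from the paper. The paper's proof is a one-line appeal: after invoking Proposition~\ref{prop:unpolarizedMatsusaka-Mumford}, it cites Koll\'ar's ``ampleness propagation'' (\cite[Proposition~3.1.2]{Kollar}) to conclude directly that $V=W=\varnothing$, so the partial extension is already global. You instead build the global extension by hand: you use normality and $\codim\geq 2$ to Hartogs-extend sections, exploit irreducibility of the special fiber to trivialize the vertical discrepancy between $\widetilde{f}^*\mathcal{M}$ and $\mathcal{L}$, and then recover the isomorphism as $\operatorname{Proj}$ of the resulting graded $R$-algebra isomorphism. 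Your approach is longer but entirely self-contained, avoiding the black-box citation; the paper's is terser but relies on knowing the cited result. One minor comment: your closing paragraph overstates the role of the primitive symplectic hypothesis in \emph{this} step---irreducibility and normality of the special fiber are all you use, and those hold in the classical smooth setting too. The genuinely primitive-symplectic input (simultaneous resolution, non-ruledness) has already been spent in Proposition~\ref{prop:unpolarizedMatsusaka-Mumford}.
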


\begin{proof}
By Proposition \ref{prop:unpolarizedMatsusaka-Mumford}, $f$ extends to an isomorphism outside closed subspaces $V \subset \mathcal{X}$ and $W \subset \mathcal{Y}$. The polarization compatibility $f^*\mathcal{M}_\eta = \mathcal{L}_\eta$ forces $V = W = \emptyset$ via the ampleness propagation in \cite[Proposition 3.1.2]{Kollar}. Thus $\widetilde{f}$, as well as its inverse, is everywhere defined. 
\end{proof}

\begin{proposition}\label{prop:birationalspecialization}
Let \( B \) be a variety over \( k \), \( \widetilde{\mathcal{X}} \) a locally trivial family of primitive symplectic varieties over \( B \), and \( R \) the localization of \( B \) at a regular codimension-1 point with residue field \( s \) and fraction field \( \eta \). Set \( \mathcal{X} := \widetilde{\mathcal{X}}_R \). For any finite subgroup \( G \subset \Bir(\mathcal{X}_\eta) \):
\begin{enumerate}
    \item There exists a closed algebraic subspace \( V \subset \mathcal{X} \) with \( V_s \neq \mathcal{X}_s \), and a homomorphism 
    \[
    \psi: G \to \Aut(\mathcal{X} \setminus V)
    \]
    such that \( \psi(f)|_{(\mathcal{X} \setminus V)_\eta} = f \) for all \( f \in G \).
    \item The composition 
    \[
    \overline{\psi}: G \xrightarrow{\psi} \Aut(\mathcal{X} \setminus V) \to \Aut\big((\mathcal{X} \setminus V)_s\big)
    \]
    is injective.
\end{enumerate}
\end{proposition}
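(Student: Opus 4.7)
The plan is to extend each finite-order element $g \in G$ using the unpolarized Matsusaka--Mumford theorem (\Cref{prop:unpolarizedMatsusaka-Mumford}), cut down the domain to a $G$-stable open, and then prove injectivity of $\overline{\psi}$ by a formal rigidity argument that exploits local triviality together with the finite order of $g$ in characteristic zero.

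\textbf{Part (1).} For each $g \in G$, \Cref{prop:unpolarizedMatsusaka-Mumford} applied to the birational self-map $g \colon \mathcal{X}_\eta \dashrightarrow \mathcal{X}_\eta$ produces closed subspaces $V_g, W_g \subsetneq \mathcal{X}$ with $V_{g,s}, W_{g,s} \subsetneq \mathcal{X}_s$ and an isomorphism $\tilde{g} \colon \mathcal{X}\setminus V_g \xrightarrow{\sim} \mathcal{X}\setminus W_g$ extending $g$. Set $U \coloneqq \mathcal{X}\setminus \bigcup_{g \in G}(V_g \cup W_g)$; since $G$ is finite, $U_s$ is a dense open of $\mathcal{X}_s$, and $U \subseteq \mathcal{X}\setminus W_g$ for every $g$, so each $\tilde{g}^{-1}(U)$ is well-defined. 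Refine $U$ by setting
\[
U' \coloneqq U \cap \bigcap_{g \in G} \tilde{g}^{-1}(U),
\]
which is still an open whose special fiber is a non-empty open of $\mathcal{X}_s$ (a finite intersection of dense opens in the irreducible $\mathcal{X}_s$). Using that $G$ is a group and $\widetilde{hg} = \tilde{h}\tilde{g}$ on $U'$ by density and separatedness, one checks directly that $\tilde{g}(U') \subseteq U'$ for every $g \in G$. Putting $V \coloneqq \mathcal{X}\setminus U'$ and $\psi(g) \coloneqq \tilde{g}|_{U'}$ gives a well-defined group homomorphism $\psi \colon G \to \Aut_R(\mathcal{X}\setminus V)$ extending the $G$-action on $\mathcal{X}_\eta$, and $V_s \subsetneq \mathcal{X}_s$ by construction.

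\textbf{Part (2).} Suppose $g \in G$ satisfies $\overline{\psi}(g) = \id$. Consider the equalizer $Z \subset \mathcal{X}\setminus V$ of $\psi(g)$ and $\id$, a closed subscheme containing $(\mathcal{X}\setminus V)_s$. Since $\mathcal{X}\setminus V$ is irreducible---being a flat family over the DVR $R$ with irreducible generic fiber $\mathcal{X}_\eta$---it suffices to prove $Z$ is also open near $(\mathcal{X}\setminus V)_s$. Fix a closed point $x \in (\mathcal{X}\setminus V)_s$. By local triviality of the family (\Cref{def:LocallyTrivial}, \Cref{lem:locallytrivial1}), after passing to completions,
\[
\widehat{\mathcal{O}}_{\mathcal{X}, x} \cong \widehat{\mathcal{O}}_{\mathcal{X}_s, x} \widehat{\otimes}_k k[\![t]\!],
\]
where $t$ is a uniformizer of $R$, and $\psi(g)$ induces an $R$-algebra automorphism of this ring which is the identity modulo $t$ and has finite order $n \coloneqq \ord(g)$. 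Writing $\psi(g)^{*}(a) \equiv a + t^{k} \delta(a) \pmod{t^{k+1}}$ for the least $k \geq 1$ with $\delta \not\equiv 0$, iterating gives $(\psi(g)^{*})^{n}(a) \equiv a + n t^{k}\delta(a) \pmod{t^{k+1}}$; the relation $(\psi(g)^{*})^n = \id$ combined with $\cha k = 0$ forces $\delta \equiv 0$, a contradiction. Inducting on $k$ yields $\psi(g)^{*} = \id$ on $\widehat{\mathcal{O}}_{\mathcal{X}, x}$, and faithful flatness of the completion $\mathcal{O}_{\mathcal{X}, x} \to \widehat{\mathcal{O}}_{\mathcal{X}, x}$ shows the defining ideal of $Z$ vanishes in a Zariski open neighborhood of $x$. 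Thus $Z$ is open in $\mathcal{X}\setminus V$; since it is also closed in the irreducible $\mathcal{X}\setminus V$, we conclude $Z = \mathcal{X}\setminus V$, so $\psi(g) = \id$ and hence $g = \id$ in $G$.

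The main obstacle is the formal rigidity step in Part (2): one must cascade the order-by-order vanishing of the deviation $\delta$ through all powers of $t$, and then translate the resulting formal equality into a genuine Zariski-local one via faithful flatness of completion. Crucially, this argument requires $\ord(g)$ to be nonzero in $k$, which is exactly where characteristic zero enters. By comparison, the refinement of $V$ in Part (1) is a purely combinatorial manipulation enabled by the finiteness of $G$.
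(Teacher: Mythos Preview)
Your proof is correct and more self-contained than the paper's, which is quite terse. For Part~(1) the paper invokes the $G$-equivariance of simultaneous resolutions (via \cite[Lemma~A.3]{FLTZ22}) to control the exceptional loci, whereas you bypass resolutions entirely and produce a $G$-stable open by the elementary refinement $U' = U \cap \bigcap_{g}\tilde g^{-1}(U)$; this is cleaner and avoids an external reference. For Part~(2) the paper simply appeals to ``faithfulness of specialization'', while you spell out the underlying formal rigidity argument (finite order plus $\cha k = 0$ forces the $t$-adic deviation to vanish order by order), which is exactly what that phrase encodes.

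One small point: your invocation of local triviality in Part~(2) is unnecessary, and the displayed isomorphism $\widehat{\mathcal O}_{\mathcal X,x}\cong \widehat{\mathcal O}_{\mathcal X_s,x}\,\widehat\otimes_k\,k[\![t]\!]$ is not literally correct as stated, since the residue field $\kappa(s)$ at a codimension-one point of $B$ need not be $k$, and Definition~\ref{def:LocallyTrivial} is only formulated at closed points of the total space over $B$. Fortunately your actual argument never uses that product decomposition: it only needs that $\psi(g)^{*}$ is an $R$-algebra automorphism of the Noetherian local ring $\widehat{\mathcal O}_{\mathcal X,x}$ which is the identity modulo $t$ and has finite order prime to $\cha k$. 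All of this follows from flatness of $\mathcal X$ over $R$ (so $\mathcal X_s$ is cut out by $t$) and the hypothesis $\psi(g)_s=\id$, with Krull's intersection theorem supplying $t$-adic separatedness. So you can simply delete the appeal to local triviality and the argument stands.
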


\begin{proof}
Apply Proposition \ref{prop:unpolarizedMatsusaka-Mumford} to each \( f \in G \). The simultaneous resolution of singularities (as in \cite[Lemma A.3]{FLTZ22}) ensures that the exceptional loci \( V_f \subset \mathcal{X} \) can be uniformly bounded. Taking \( V = \bigcup_{f \in G} V_f \), the \( G \)-equivariance of resolutions guarantees the homomorphism \( \psi \). Injectivity of \( \overline{\psi} \) follows from the faithfulness of specialization when \( V_s \neq \mathcal{X}_s \).
\end{proof}

\subsection{Moduli stack of polarized primitive symplectic varieties}

Let $\Sch_{\CC}$ be the site of schemes of finite type over $k$ with étale topology. Consider the following fibered category in groupoids
\[
\begin{aligned}
    \HK_{2n,d} \colon \Sch_{k} &\to \Grps \\
    B &\rightsquigarrow \Set*{(\mathcal{X} \xrightarrow{\pi} B, \lambda) \given \parbox{15em}{$\pi$ is a locally trivial family of primitive symplectic varieties of dimension $2n$ and $\lambda$ is a polarization on $\mathcal{X}$ with $(\lambda)^{2n}=d$.}},
\end{aligned}
\]where isomorphisms in the groupoids are the natural ones for pairs, and a polarization on $\mathcal{X}$ is an element $\lambda \in \Pic_{\mathcal{X}/B} (B)$ whose restriction on any geometric fiber is an ample line bundle. 
\begin{remark}
By the discussion in \Cref{subsec:NS}, over $\CC$, an (analytic) global section of the Picard scheme is equivalent to a family of integral Hodge $(1,1)$-classes of $\pi$ .
\end{remark}

\begin{theorem}\label{thm:ModuliStack}
    The moduli stack $\HK_{2n,d}$ of locally trivial families of polarized primitive symplectic varieties of degree $d$ is a Deligne--Mumford stack, which is smooth, separated, and of finite type over $k$.
\end{theorem}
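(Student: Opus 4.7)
The plan is to verify the four conclusions (being a stack, being Deligne--Mumford, smoothness, separatedness, and finite type) in turn, each tailored to a specific input developed earlier in the paper.

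\textbf{Stack structure and DM property.} First I would check that $\HK_{2n,d}$ is indeed a stack in the étale topology. Descent of polarized flat proper algebraic spaces is standard; local triviality descends because the strict henselian isomorphism condition \eqref{eq:etaleLocallyTrivial} is formulated in terms of strict henselizations, which are stable under étale base change. To obtain the DM property it suffices to show that the diagonal $\Delta \colon \HK_{2n,d} \to \HK_{2n,d}\times_k \HK_{2n,d}$ is representable, unramified and of finite type. Representability and finiteness of the diagonal reduce to the existence of the relative $\mathrm{Isom}$-scheme $\underline{\mathrm{Isom}}_B((\mathcal{X},\cL),(\mathcal{Y},\mathcal{M}))$ for two polarized families, which exists as a quasi-projective $B$-scheme because the relevant polarizations are ample and the fibers are geometrically reduced with trivial canonical class. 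Unramifiedness then reduces fiberwise to the statement that $\underline{\Aut}(X,\cL)$ is finite étale over $k$, which is exactly \Cref{lem:finitePolarizedAut}(2).

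\textbf{Finite type (boundedness).} Since each fiber is a $\QQ$-Gorenstein variety with numerically trivial canonical class, Matsusaka's big theorem (with the intersection-theoretic data $(\cL)^{2n}=d$ and $c_1(K_{X})=0$ fixed) provides an integer $N=N(n,d)$ such that $\cL^{\otimes N}$ is very ample with Hilbert polynomial depending only on $n$ and $d$; one either applies the singular version of Matsusaka's theorem directly or passes to the simultaneous resolution $\widetilde{\cX}\to\cX$ (\Cref{prop:SimultaneousResolution}) and uses the smooth Matsusaka theorem together with the fact that $\pi_*\cO_{\widetilde{\cX}}=\cO_{\cX}$ (rational singularities). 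Thus every object of $\HK_{2n,d}$ embeds, after choosing a basis of $\rH^0(X,\cL^{\otimes N})$, into a fixed projective space, and $\HK_{2n,d}$ is the quotient by $\PGL$ of a locally closed substack of the Hilbert scheme of this projective space parametrizing locally trivial polarized families of primitive symplectic varieties. Local closedness of the latter locus follows from the openness of rational singularities, $\QQ$-factoriality/terminality behavior, and the characterization of local triviality via \Cref{lem:locallytrivial1}.

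\textbf{Smoothness.} To check smoothness I would verify the infinitesimal lifting property over small extensions $A'\twoheadrightarrow A$ of Artinian local $k$-algebras. Given a polarized locally trivial family $(\cX_A,\cL_A)$ over $\Spec A$, the unobstructedness of locally trivial deformations of primitive symplectic varieties (the Kuranishi space $\Def^{\mathrm{lt}}(X)$ is smooth by \Cref{prop:localTorelliThm}) produces a locally trivial lift $\cX_{A'}$. The polarization $\cL_A$ then lifts to $\cL_{A'}$ because the obstruction class lies in a group governed by $\rH^2(X,\cO_X)/\text{Hodge classes}$, and under local Torelli the tangent directions preserving $[\cL]$ correspond precisely to the hyperplane $\cL^\perp$ in the period domain $\Omega_\Lambda$, which is smooth of codimension one. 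Thus the full polarized deformation functor is unobstructed with smooth tangent space of dimension $b_2(X)-3$.

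\textbf{Separatedness.} Finally, separatedness is checked through the valuative criterion applied to the (representable) diagonal. Given a DVR $R$ with fraction field $\eta$ and residue field $s$, two families $(\cX,\cL)$ and $(\cY,\mathcal{M})$ over $\Spec R$ together with an isomorphism of their generic fibers respecting polarizations, I would invoke \Cref{cor:polarizedMatsusakaMumford}, the polarized Matsusaka--Mumford theorem established in Section 3, which extends the generic isomorphism uniquely to a global isomorphism over $R$. This shows $\underline{\mathrm{Isom}}_R$ is proper over $\Spec R$, hence $\Delta$ is proper, and $\HK_{2n,d}$ is separated.

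The main obstacle is likely to be the finite type assertion: the singular Matsusaka-type boundedness and the local closedness of the ``locally trivial primitive symplectic'' locus inside the Hilbert scheme must be justified carefully, since local triviality is a strong, non-generically-open constraint that must be phrased via formal completions as in \Cref{lem:locallytrivial1}; separatedness is comparatively straightforward once \Cref{cor:polarizedMatsusakaMumford} is available.
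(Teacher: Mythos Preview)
Your approach matches the paper's almost exactly: DM via \Cref{lem:finitePolarizedAut}(2), smoothness via the locally trivial deformation theory of \cite{BL22}, and separatedness via \Cref{cor:polarizedMatsusakaMumford}.

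The one genuine gap is in your boundedness step. Matsusaka's big theorem, as usually stated (including the singular version the paper cites), takes the \emph{full} Hilbert polynomial $P$ as input and produces the uniform $N$; it does not by itself tell you that the Hilbert polynomial is determined, or even bounded, by the top intersection number $(\cL)^{2n}=d$ and $K_X\equiv 0$. The paper handles this in two steps: first Matsusaka gives finite type for each fixed-$P$ substack $\HK^P$, and then the Koll\'ar--Matsusaka inequality \cite{Kollar-Matsusaka} (combined with Kodaira vanishing, so that the Hilbert polynomial equals $t\mapsto \dim|\cL^{\otimes t}|$) shows that the index set $I_{d,n}$ of occurring Hilbert polynomials is finite. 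Your sentence ``Matsusaka's big theorem \ldots\ provides an integer $N=N(n,d)$ such that $\cL^{\otimes N}$ is very ample with Hilbert polynomial depending only on $n$ and $d$'' collapses these two distinct inputs into one, and as written is not justified. Also, the locus you need to cut out inside the Hilbert scheme is ``locally trivial primitive symplectic''; there is no $\QQ$-factoriality or terminality condition in the definition of $\HK_{2n,d}$, so those remarks are extraneous here.

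A minor point on smoothness: what you really need is unobstructedness of locally trivial deformations of the pair $(X,L)$, which is \cite[Theorem 4.7, Lemma 4.13]{BL22}; the local Torelli theorem (\Cref{prop:localTorelliThm}) is a consequence rather than the source of this smoothness.
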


\begin{proof}
Since all primitive symplectic varieties have rational singularities, Matsusaka's big theorem can be applied (see \cite[Theorem 2.4]{Matsusaka}). Thus the moduli stack $\HK^P$ of locally trivial families of polarized primitive symplectic varieties with Hilbert polynomial $P$ is a finite type algebraic stack over $k$ (cf.~\cite[Lemma 3.2.5, Lemma 3.3.6, 3.3.7]{Bindt}). \Cref{lem:finitePolarizedAut} implies that $\HK^P$ is Deligne--Mumford.
Therefore, the stack \[\HK_{2n,d} = \bigsqcup_{P \in I_{d,n}} \HK^P\]is a Deligne--Mumford stack locally of finite type over $k$, where
\[
I_{d,n} \coloneqq \Set*{P(t) =\sum_{i\geq 0}^{2n} a_i t^i \in  \QQ[t] \given \parbox{11em}{ $a_{2n} = \dfrac{d}{(2n)!}$ and $\HK^P \neq \varnothing$}}.
\]
The rest of the required properties can be verified as follows.
\begin{enumerate}
    \item 
    Since a primitive symplectic variety has only rational singularities, the Kodaria vanishing theorem holds. Thus, as $X$ is $K$-trivial, the Hilbert polynomial of $X$ with respect to an ample line bundle $\cL$ is equal to $P(t)= \dim \lvert \cL^{\otimes t} \rvert$. Then, Koll\'ar--Matsusaka's inequality (\cite[Theorem]{Kollar-Matsusaka}) implies that $I_{d,n}$ is a finite set. Therefore, $\HK_{2n,d}$ is of finite type over $k$. 
    \item The theory of locally trivial deformation given in \cite[Theorem 4.7, Lemma 4.13]{BL22} implies that $\HK_{2n,d}$ is smooth over $k$.
    \item  \Cref{cor:polarizedMatsusakaMumford} implies that the DM stack $\HK_{2n,d}$ is separated over $k$. \endproof
\end{enumerate}
\end{proof}

\subsection{(Weak) polarization and period map}
In this part, we will always assume $k = \CC$. 

Fix a connected component $\HK_{2n,d}^{\circ}$ of $\HK_{2n,d}$ that contains a fixed $\CC$-base point $[X_0,\lambda_0]$, with $h = c_1(\lambda_0) \in \Lambda \coloneqq \rH^2(X_0,\ZZ)$. Let $\Lambda_h \coloneqq h^{\bot} \subseteq \Lambda$ be the sublattice given by the orthogonal complement of $h$, which is of signature $(2, b_2(X)-3)$. The period domain $\Omega_{\Lambda_h}$ of the orthogonal group  $\SO(\Lambda_h)_{\QQ}$ is defined as a connected component of 
\[
\Omega_{\Lambda_h}^{\pm} \coloneqq \Set*{ [\sigma] \in \PP(\Lambda_h \otimes \CC) \given q(\sigma) =0, q(\sigma, \bar{\sigma})>0}.
\]

There is an arithmetic subgroup $\Gamma_h \subseteq \mathrm{O}(h^{\bot})$ (see \cite[Theorem 8.2 (1)]{BL22} and \cite[\S 8]{Markmansurvey}) and a period map 
 \begin{equation}\label{eq:polarziedPeriodMap}
     \cP_{h} \colon \HK_{2n,d}^{\circ,\an} \to [\Gamma_h \backslash \Omega_{\Lambda_h}^{\pm}],
 \end{equation}
associated with the polarized variation of Hodge structure $\underline{L}^{\bot} \subset R^2\pi_* \ZZ$ for the universal polarized family $(\mathcal{X} \xrightarrow{\pi} \HK^{\circ}_{2n,d}, \underline{L})$.
Borel's algebraicity theorem, or more generally, the o-minimal GAGA principle (\cite[Theorem 1.1]{BBT23}) implies that $\cP_h$ is algebraic.

\begin{proposition}\label{prop:quasifiniteperiodmap}
    The period map $\cP_h$ is  quasi-finite. More precisely, for any étale atlas $U$ of a connected component $\HK_{2n,d}^{\circ}$, the period map morphism $\pi \colon U \to [\Gamma_h \backslash \Omega_{\Lambda_h}^{\pm}]$ is quasi-finite to its image. 
\end{proposition}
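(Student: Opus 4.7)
The plan is to deduce quasi-finiteness of $\cP_h$ from the Local Torelli theorem (Proposition \ref{prop:localTorelliThm}) together with the smoothness of $\HK_{2n,d}^{\circ}$ (Theorem \ref{thm:ModuliStack}), by showing that on any étale atlas the period map is locally a biholomorphism, hence in particular étale and quasi-finite.

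More concretely, I fix an étale atlas $U \to \HK_{2n,d}^{\circ}$, a point $u \in U(\CC)$ corresponding to a polarized primitive symplectic variety $(X_u, L_u)$, and set $h := c_1(L_u) \in \Lambda$. Choosing a simply connected open analytic neighborhood $\widetilde{U}$ of $u$, the composite
\[
\widetilde{U} \longrightarrow \HK_{2n,d}^{\circ,\an} \xrightarrow{\cP_h} [\Gamma_h \backslash \Omega_{\Lambda_h}^{\pm}]
\]
lifts to a holomorphic map $\widetilde{\cP} \colon \widetilde{U} \to \Omega_{\Lambda_h}^{\pm}$, and it is enough to prove that $\widetilde{\cP}$ is a local biholomorphism. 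By the universal property of the locally trivial Kuranishi family, the étale morphism $U \to \HK_{2n,d}^{\circ}$ induces, in an analytic neighborhood of $u$, an étale classifying map to $\Def^{\lt}(X_u)$. Because our family carries the polarization class $h$, this classifying map factors through the polarized locally trivial Kuranishi subspace $\Def^{\lt, h}(X_u) \subset \Def^{\lt}(X_u)$, which parametrizes those deformations along which the class $h$ remains of Hodge type $(1,1)$ --- equivalently, along which the line bundle $L_u$ extends.

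I would then invoke Proposition \ref{prop:localTorelliThm} to identify $\Def^{\lt}(X_u)$ analytically with an open neighborhood of the period point $[\sigma_u]$ in $\Omega_{\Lambda}$. Under this identification, the polarized subspace $\Def^{\lt, h}(X_u)$ corresponds to the slice $h^\perp \cap \Omega_{\Lambda}$, which is an open neighborhood of $[\sigma_u]$ in $\Omega_{\Lambda_h}^{\pm}$. Composing the resulting biholomorphism $\Def^{\lt, h}(X_u) \xrightarrow{\sim} \Omega_{\Lambda_h}^{\pm}$ with the étale map $\widetilde{U} \to \Def^{\lt, h}(X_u)$ from the previous step recovers $\widetilde{\cP}$. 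A dimension count, using that $\HK_{2n,d}^{\circ}$ is smooth of dimension $b_2(X_u) - 3$ (Theorem \ref{thm:ModuliStack}) and $\dim \Omega_{\Lambda_h}^{\pm} = b_2(X_u) - 3$, then shows that $\widetilde{\cP}$ is a local isomorphism, and in particular quasi-finite onto its image.

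The main obstacle I anticipate is verifying, for singular primitive symplectic varieties, that the polarized Kuranishi subspace $\Def^{\lt, h}(X_u)$ corresponds under Local Torelli to the divisor $h^\perp \cap \Omega_{\Lambda}$. For smooth hyper-Kähler manifolds this is the classical statement with tangent space $\rH^1(X, T_X) \cap h^\perp$. In the locally trivial singular setting, the required compatibility should be extracted from the variation-of-Hodge-structure framework of Proposition \ref{prop:PropLocallyTrivial} together with the locally trivial deformation theory developed in \cite{BL22}: namely, a line bundle on a fiber of a locally trivial family deforms along the family if and only if its first Chern class remains of Hodge type $(1,1)$, so the maximal subspace along which $L_u$ extends is precisely the Noether--Lefschetz locus cut out by $h$.
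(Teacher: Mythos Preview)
Your proposal is correct and follows essentially the same route as the paper: both deduce from the local Torelli theorem that the period map is a local isomorphism at every point of an \'etale atlas, hence unramified, and then conclude quasi-finiteness. For your anticipated obstacle, the paper simply cites \cite[Lemma 4.13]{BL22} (which is precisely the polarized local Torelli statement $\Def^{\lt}(X,L) \xrightarrow{\sim} \Omega_{\Lambda_h}^{+}$ you are after), and it makes explicit the passage from \emph{locally} quasi-finite to quasi-finite via the finite-type assertion of Theorem~\ref{thm:ModuliStack}, a step you leave implicit.
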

\begin{proof}
Let $U$ be an étale neighborhood of a point $(X,L)\in \HK_{2n,d}^{\circ}$.
The analytic completion of the period map $\pi \colon U \to [\Gamma_h \backslash \Omega_{\Lambda_h}^{\pm}]$ at the point $(X,L)$ is given by
\[
\Def^{\lt}(X,L) \to \Omega_{\Lambda_h}^{+} \subseteq \Omega_{\Lambda}
\]
which is a local isomorphism by the local Torelli theorem \ref{prop:localTorelliThm} and \cite[Lemma 4.13]{BL22}. This implies that the $\pi \colon U \to [\Gamma_h \backslash \Omega_{\Lambda_h}^{\pm}]$ is (formally) unramified for any étale atlas $U \to \HK_{2n,d}^{\circ}$.
   Hence $\cP_h$ is locally quasi-finite by \cite[Lemma 0H2Z]{stacks-project}. By \Cref{thm:ModuliStack}, the moduli stack $\HK_{2n,d}^{\circ}$ is known to be of finite type over $\CC$. Therefore $\cP_h$ is quasi-finite.
\end{proof}

In general, a locally trivial family is not necessarily a projective morphism.  For this reason, we propose the following weaker notion of polarization and show that it always exists.

\begin{definition}\label{def:weakpolarization}
Let $(B,0)$ be a pointed connected variety.
Let $\pi\colon \mathcal{X} \to B$ be a locally trivial family of primitive symplectic varieties. Let $\Lambda$ be the lattice $\rH^2(\mathcal{X}_0,\ZZ)$.
\begin{enumerate}
    \item A \emph{weak polarization} of $\pi$ is a global section $\underline{L} \in \Gamma(B, (R^2\pi_* \ZZ)_{\tf})$ such that
    \begin{enumerate}
        \item $\underline{L}_b$ is $(1,1)$-class for any point $b \in B(\CC)$ with $q(\underline{L}_b) >0$, and
        \item $\underline{L}_{b_0} = c_1(\cL)$ for an ample line bundle $\cL$ on $\mathcal{X}_{b_0}$ for a very general point $b_0$ of $B$.
    \end{enumerate}
    \item Let $h  \in \Lambda$ with $q(h) >0$. A weak polarization on the pointed family $\cX/B$ is \emph{of type $[h]$} if $c_1(\underline{L}_0) \in [h]= \operatorname{O}(\Lambda) \cdot h$.
\end{enumerate}
\end{definition}

We shall remark the finiteness of polarization types on a pointed family.
\begin{lemma}\label{lem:FiniteH}
    Let $m$ be a positive integer. There are finitely many $\mathrm{O}(\Lambda)$-orbit $[h]$ such that the Beauville--Bogomolov square $q(h) = m$. In particular, for a pointed family of primitive symplectic varieties, a polarization of degree $d$ has finitely many possible equivalent polarization types.
\end{lemma}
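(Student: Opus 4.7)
The plan is to prove the lattice-theoretic finiteness first, then deduce the statement about polarization types using Fujiki's relation.

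First, I would reduce to the case of primitive vectors. Any nonzero $h \in \Lambda$ factors uniquely as $h = n h_{0}$ with $h_{0} \in \Lambda$ primitive and $n \in \mathbb{Z}_{\geq 1}$. If $q(h) = m$, then $q(h_{0}) = m/n^{2}$ is necessarily an integer, so $n^{2} \mid m$, and only finitely many $n$ occur. Two such vectors lie in the same $\mathrm{O}(\Lambda)$-orbit iff their scalars agree and their primitive factors do, so it suffices to show that for each fixed positive integer $m'$, the primitive elements of $\Lambda$ with $q = m'$ form only finitely many $\mathrm{O}(\Lambda)$-orbits.

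Next, I would invoke Nikulin's classification of primitive embeddings of lattices (Proposition 1.15.1 of Nikulin, \emph{Integral symmetric bilinear forms and some of their applications}): the $\mathrm{O}(\Lambda)$-orbit of a primitive $h_{0}$ with $q(h_{0}) = m'$ is determined by the isomorphism class of the orthogonal complement $h_{0}^{\perp} \subset \Lambda$, together with a gluing isomorphism between the finite discriminant forms of $\langle m' \rangle$ and $h_{0}^{\perp}$ that reconstructs $\Lambda$. The orthogonal complement has rank $\mathrm{rk}(\Lambda) - 1$ and discriminant dividing $m' \cdot |\!\disc(\Lambda)|$, and by the classical finiteness of genera of lattices of bounded rank and discriminant (together with the finiteness of isomorphism classes within each genus, a consequence of Minkowski's reduction theory and Borel--Harish-Chandra), there are only finitely many such lattices $h_{0}^{\perp}$. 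The possible gluing data lies in a finite set as well, so the assertion follows.

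For the ``in particular'' statement, I would apply Fujiki's relation (see \cite{BL22}): for $X$ in a fixed (locally trivial) deformation type, there exists a constant $c_{X} > 0$ such that $\alpha^{2n} = c_{X} \cdot q(\alpha)^{n}$ for all $\alpha \in \rH^{2}(X, \mathbb{R})$. If $\lambda$ is a polarization of degree $d$, then $q(\lambda)^{n} = d/c_{X}$ is determined, so $q(\lambda)$ takes only finitely many (in fact at most one) positive integer values. Applying the first part to each such value gives the finiteness of polarization types.

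The one genuinely non-trivial ingredient is Nikulin's finiteness for primitive embeddings; everything else is bookkeeping and an invocation of Fujiki's formula.
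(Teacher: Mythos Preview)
Your proof is correct and follows essentially the same approach as the paper: both reduce the question to the finiteness of embeddings $\langle m\rangle \hookrightarrow \Lambda$ up to $\mathrm{O}(\Lambda)$. The paper simply invokes Kneser's Satz~(30.2) for this finiteness in one line, whereas you unpack it via the reduction to primitive vectors and Nikulin's discriminant-form classification together with the finiteness of lattices of bounded rank and discriminant; your treatment of the ``in particular'' via the Fujiki relation is exactly how the paper uses it later.
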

\begin{proof}
Any such $h$ determines an embedding of lattices $\langle m \rangle \hookrightarrow \Lambda$, and two $h$ and $h'$ lie in the same $\mathrm{O}(\Lambda)$-orbit if and only if their corresponding embeddings are isomorphic. By \cite[Satz (30.2)]{Kneser02}, there are only finitely many such embeddings up to the action of $\mathrm{O}(\Lambda
)$. 
\end{proof}

We observe that on a locally trivial family, weak polarization always exists even the family is not projective. Moreover, the period map of the weight-two variation of Hodge structure, which is polarized by the weak polarization, has a rational lifting to the moduli stack.
\begin{proposition}\label{prop:RationallyExtended}
    Suppose $\pi\colon\mathcal{X} \to B$ is a locally trivial family of primitive symplectic varieties, with base point $0 \in B(\CC)$. Let $(\Lambda,q) = (\rH^2(\mathcal{X}_0,\ZZ),q_X)$. 
    Then there exist 
    \begin{enumerate}
        \item $d>0$, $h \in \Lambda$ with $q(h) =d$,
        \item a rational map $\phi_{\pi}\colon B\dashrightarrow \HK^{\circ}_{2n,d}$ to a connected component of $\HK_{2n,d}$, and
        \item a weak polarizaton $\underline{L}$ on $\pi$ of type $[h]$
    \end{enumerate}
    such that the composition of $\phi_{\pi}$ with the period map $\cP_{h}\colon \HK^{\circ}_{2n,d}\to [\Gamma_h \backslash \Omega_{\Lambda_h}^{\pm}]$ is a well-defined morphism. In short we have a commutative diagram:
\begin{equation}\label{eq:PointedRationalMaptoModuli}
\begin{tikzcd}
    B \ar[r,dashed, "\phi_{\pi}"] \ar[dr] & \HK^{\circ}_{2n,d} \ar[d,"\cP_{h}"] \\
    &\lbrack \Gamma_h \backslash \Omega_{\Lambda_h}^{\pm} \rbrack.
\end{tikzcd}
\end{equation}
\end{proposition}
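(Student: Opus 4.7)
The plan is to produce a weak polarization $\underline{L}$ on $\pi$, extract from it an integer $d$, a class $h$, and a rational map $\phi_\pi \colon B \dashrightarrow \HK^\circ_{2n,d}$ via the polarized moduli classification, and finally verify that $\cP_h \circ \phi_\pi$ extends to a well-defined morphism on $B$ using that the polarized sub-VHS is globally defined.

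First, fix an ample line bundle $L_0$ on $\mathcal{X}_0$ and put $h_0 \coloneqq c_1(L_0) \in \Lambda$. Let $G \subset \rO(\Lambda, q)$ be the image of the monodromy representation of $R^2\pi_*\ZZ$. Local triviality of $\pi$ implies that each $g \in G$ is realized by a genuine automorphism of $\mathcal{X}_0$, so $G$ acts on $\NS(\mathcal{X}_0)$ preserving the Kähler cone. By Deligne's theorem of the fixed part applied to the polarized $\QQ$-VHS from \Cref{prop:PropLocallyTrivial}, the invariant subspace $\Lambda^G_\QQ$ underlies a constant sub-VHS of $R^2\pi_*\QQ$; since $\dim \rH^{2,0}(\mathcal{X}_0) = 1$ and, after reducing to the non-isotrivial case, the symplectic period genuinely varies along $B$, this sub-VHS is of pure Hodge type $(1,1)$. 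I let $h \in \Lambda$ be a positive integral multiple of the orthogonal projection of $h_0$ onto $\Lambda^G_\QQ$ and set $d \coloneqq q(h)$, postponing the verification that $d > 0$ until the end. The flat global section $h$ of $(R^2\pi_*\ZZ)_{\tf}$ is of Hodge type $(1,1)$ on every fiber, and the projectivity criterion for primitive symplectic varieties combined with Noether--Lefschetz genericity then delivers a weak polarization $\underline{L} = h$ of type $[h]$ in the sense of \Cref{def:weakpolarization}.

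Next, let $B^\circ \subseteq B$ be the Zariski open locus on which $h$ restricts to an ample class on the fiber. Since $\underline{\Pic}_{\mathcal{X}/B}$ is étale over $B$ (as $R^1\pi_*\mathcal{O}_{\mathcal{X}} = 0$ fiberwise), after passing to a sufficiently divisible multiple of $h$ to kill the Brauer obstruction in $\Br(B^\circ)$, the flat class $h$ lifts to a relative ample line bundle $\mathcal{L}^\circ$ on $\mathcal{X}|_{B^\circ}$. The resulting polarized locally trivial family is classified, by \Cref{thm:ModuliStack}, by an algebraic morphism $\phi_\pi \colon B^\circ \to \HK^\circ_{2n,d}$, which we view as a rational map $B \dashrightarrow \HK^\circ_{2n,d}$. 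On $B^\circ$, the composition $\cP_h \circ \phi_\pi$ equals the period morphism of the polarized sub-VHS $h^\perp \subset R^2\pi_*\ZZ$. But this sub-VHS is defined over all of $B$, so after enlarging $\Gamma_h$ via \cite[Theorem 8.2]{BL22} to contain the full monodromy group, it yields a globally defined algebraic morphism $\mathcal{P} \colon B \to [\Gamma_h \backslash \Omega_{\Lambda_h}^{\pm}]$ by the o-minimal GAGA principle of \cite{BBT23}, which agrees with $\cP_h \circ \phi_\pi$ on $B^\circ$; this is the required extension, giving the commutative diagram \eqref{eq:PointedRationalMaptoModuli}.

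The most delicate step is the positivity $d = q(h) > 0$. My intended argument identifies the Lorentzian positive direction in $\NS(\mathcal{X}_0)_\RR \cap \rH^{1,1}(\mathcal{X}_0, \RR)$ and forces it into $\Lambda^G_\RR$ using that $G$ preserves the Kähler cone, combined with a Brouwer-type fixed-ray argument on the convex Lorentzian cone and the rational polyhedral fundamental domain of \Cref{thm:coneconj}. Should this direct projection approach prove insufficient in pathological monodromy situations, the fallback is to pass to a suitable finite étale cover of $B$ trivializing the monodromy orbit of $h_0$ and descend after symmetrizing over the Galois group; a secondary but non-trivial obstacle is the descent from the flat class $h$ to a genuine relative line bundle $\mathcal{L}^\circ$ on $B^\circ$, which I resolve by replacing $h$ by a sufficiently divisible multiple and exploiting the étaleness of $\underline{\Pic}_{\mathcal{X}/B}$.
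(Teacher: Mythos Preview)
Your argument has a genuine gap at its foundation. The claim ``Local triviality of $\pi$ implies that each $g \in G$ is realized by a genuine automorphism of $\mathcal{X}_0$'' is false. In this paper, \emph{locally trivial} (\Cref{def:LocallyTrivial}) is a condition on the formal/\'etale local structure of the total space; for smooth $\mathcal{X}$ it is automatic. A smooth family of K3 surfaces is locally trivial in this sense, yet its monodromy operators are merely diffeomorphisms, not biholomorphisms, and they neither preserve $\NS(\mathcal{X}_0)$ nor the K\"ahler cone. Once this claim fails, the rest of the construction collapses: there is no reason for the projection of $h_0$ onto $\Lambda^G_\QQ$ to be nonzero (it could well be zero if $h_0$ happens to lie in the orthogonal complement), the ``reduction to the non-isotrivial case'' is not available since the statement makes no such hypothesis, and the Brouwer-type argument for $q(h)>0$ rests on the same unjustified cone preservation.

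The paper's route is much shorter and avoids all of this. Instead of starting at the special fibre $\mathcal{X}_0$ and trying to manufacture a monodromy-invariant positive class, start at the \emph{generic} fibre $\mathcal{X}_\eta$, which is projective by the very definition of primitive symplectic variety. Spread out a $\pi$-ample line bundle to an open $U\subset B$, obtaining both the rational map $\phi_\pi$ and a section $c_1(L)\in\Gamma(U,R^2\pi_*\ZZ)$ with $q>0$ for free. The key point you are missing is that since $U\subset B$ is Zariski open, $\pi_1(U^{\an})\twoheadrightarrow\pi_1(B^{\an})$ is surjective, so this section extends uniquely to a flat section $\underline{L}$ over all of $B$; it remains of type $(1,1)$ everywhere because Hodge filtrations are closed under specialization. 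Positivity of $d=q(h)$ is then automatic, and your final paragraph on extending the period map via the globally defined sub-VHS $\underline{L}^\perp$ is essentially correct and matches the paper.
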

\begin{proof}

Let $\eta$ be a generic point of $B$.
Since $\mathcal{X}_{\eta}$ is a projective primitive symplectic variety, there exists an open subscheme $U \subset B$ such that 
$\mathcal{X}_{U}$ is a projective scheme over $U$.
Let $L$ be a $\pi_U$-ample line bundle on $\mathcal{X}_{U}$. In other word, the restricted family $\mathcal{X}_U \to U$ admits a polarization $\underline{L}$. The pair $(\mathcal{X}_U\to U, \underline{L})$ determines a morphism $j \colon U \to \HK_{2n,d}$. Shrinking $U$ if necessary, we may assume that $U$ is connected.
 Thus we get a rational map from $B$ to a connected component of $\HK_{2n,d}$
\[
\phi\colon B \dashrightarrow \HK^{\circ}_{2n,d} 
\]
defined over $U \subseteq B$. By the construction of $\cP_h$, the morphism $\cP_h \circ j \colon U \to \Gamma_h \backslash \Omega_{\Lambda_h}^{\pm}$ is the period map determined by the primitive part $\mathbf{P}^2(\cX_U/U)$.

Recall that $R^2 \pi^{\an}_{\ast} \ZZ$ is a local system on $B^{\an}$ by \Cref{prop:PropLocallyTrivial}.
Since $U \subset B$ is a Zariski open subset, the natural morphism
\[
\pi_1 (U^{\an}) \rightarrow \pi_1 (B^{\an})
\]
is surjective.
Therefore, the section $c_1 (L^{\an}) \subset \Gamma (U^{\an}, R^2 \pi^{\an}_{\ast} \ZZ)$ extends to a section $\underline{L} \in \Gamma (B, R^2 \pi^{\an}_{\ast} \ZZ)$. Since $\underline{L}_{b_0}$ is a $(1,1)$-class for any point $b_0 \in U^{\an}$ and $U^{\an} \subset X^{\an}$ is dense, $\underline{L}_b$ are all Hodge $(1,1)$ classes for all points $b \in B^{\an}$. In particular, $\underline{L}$ is a weak polarization on $\cX/B$. The  type of weak polarization $\underline{L}$ with respect to the base point $b$ is $h = c_1(\underline{L}_b) \in \Lambda$.

The orthogonal complement $\underline{L}^{\bot} \subseteq R^2\pi_*\ZZ_{\tf}$ is polarized by the restriction of Beauville--Bogomolov form since it satisfies the Hodge--Riemann relations as in the polarized case. Therefore, $\cP_h \circ j$ extends to the (algebraic) period map
 \begin{equation}\label{eq:PeriodMapWeakPolarization}
     \cP_{\underline{L}^{\bot}} \colon B \to \lbrack\Gamma_h \backslash \Omega_{\Lambda_h}^{\pm}\rbrack.
 \end{equation}
 associated with $\underline{L}^{\bot}$. The commutativity of the diagram is clear. \qedhere
 
\end{proof}

\begin{remark}We note that the pointed Shafarevich set \eqref{eq:pointshaf} is countable up to isomorphism. {Assume that $B=C$} is a connected smooth curve. Applying Zariski's main theorem for the period map $\cP_h$, we see there is a closed subset $\Delta_d \subset C$ such that for locally trivial family $\pi$ in $\eqref{eq:pointshaf}$ that admits a weak polarization of degree $d$, the rational map $\phi_{\pi}$ has definition outside $C \setminus \Delta_d$. As $C$ is a curve, $\Delta_d$ is a finite set, and for any $b \in \Delta_d$, the fibers are finite up to birational equivalence. Now, it is not hard to see the countability from \Cref{lem:finitePolarizedAut} and the Matsusaka--Mumford theorem.

\end{remark}

\subsection{Proof of \Cref{thm:polarizedfiniteness}}\label{sec:FinitePolarizedHK}

Let $\overline{\eta}$ be a geometric generic point of $B$.
Note that we have a natural map 
\begin{equation}\label{eq:Geometricalization}
\Shaf_{B,X,d} \to \overline{\Shaf}_{B,X,d},\quad
    (\cX_{\eta}, L) \mapsto (\cX_{\bar{\eta}}, L_{\overline{\eta}} )
\end{equation}
where
\[
    \overline{\mathrm{Shaf}}_{B,X,d} \coloneqq
    \{
    (\mathcal{X}_{\overline{\eta}}, L_{\overline{\eta}}) \mid (\mathcal{X}_{\eta}, L) \in \mathrm{Shaf}_{B,X,d}
    \}/\simeq_{\overline{\eta}}.
    \]
It is sufficient to show that $\overline{\Shaf}_{B,X,d}$ is finite and that the fibers of \eqref{eq:Geometricalization} are finite.

\begin{step}[1]
In the setting of \Cref{thm:polarizedfiniteness}, the set
    \(\overline{\mathrm{Shaf}}_{B,X,d}\)
    is a finite set.
\end{step}

As before, let $\Lambda$ denote $\rH^2(X_{\CC},\ZZ)_{\tf}$. Let $\cX \to B$ be a locally trivial family with $\cX_0 \cong X$ (over $k$). For any $(\cX_\eta,L) \in \Shaf_{B,X,d}$ with $L$ an ample line bundle, it defines a point $x \in \HK_{2n,d}^{\circ} (k(\eta))$ for some connected component $\HK^{\circ}_{2n,d} \subseteq \HK_{2n,d}$.

We may assume $k \subseteq \CC$.
By \Cref{prop:RationallyExtended}, there exist $h$ such that we have a commutative diagram
\[
\begin{tikzcd}
    B_{\CC} \ar[r,dashed, "x"] \ar[dr,"\widetilde{x}"] & \HK^{\circ}_{2n,d} \ar[d,"\cP_{h}"] \\
    & \Gamma_h \backslash \Omega_{\Lambda_h}^{\pm}
\end{tikzcd}
\]
such that  $\widetilde{x}(\eta) = \cP_{h,F}(x)$, and
    $\widetilde{x}(0_{\CC}) $ is the $\Gamma_h$-equivalent classes of polarized $\ZZ$-Hodge structures $[H^2_{\prim}(X,\ZZ)]$ for some $h \in \Lambda$. \Cref{lem:FiniteH} implies that, up to $O(\Lambda)$, there are only finitely many possible choices of $h$ with a fixed $q_X (h)$. 
    Therefore, there are finitely many targets $\Gamma_h \backslash \Omega_{\Lambda_h}^{\pm}$ for $\widetilde{x}$ as $x$ varies in $\Shaf_{B,X,d}$,  because $q_X(h)^{ \dim X/2} = d/c_X$ for the Fujiki constant $c_X$ of $X$ ({ cf.\ \cite[Subsection 5.14]{BL22}}). Therefore, we may assume that all $\widetilde{x}$ have the same target $\Gamma_h \backslash \Omega_{\Lambda_h}^{\pm}$ for some $h \in \Lambda$.

In this case, we shall note that $\widetilde{x}(0_{\CC})$ are also all the same. Therefore, by \cite[Theorem 6.1, Lemmas 2.4--2.6]{JL23},
the number of isomorphism classes of $\widetilde{x}$ (where $x$ varies in $\mathrm{Shaf}_{B,X,d}$) is finite, which implies that the number of isomorphism classes of $\cP_{h} (x)$ is finite.
Since $\cP_{h}$ is quasi-finite, $\cP_{h}$ on the groupoid over $\overline{\eta}$ is finite-to-one modulo isomorphisms.
That means, the isomorphism classes of $\widetilde{x}(\overline{\eta})$ are finitely many, and it finishes the proof.

\begin{step}[2]
 Finiteness of twists with locally trivial reduction over $B$. 
\end{step}
More precisely,
 Let $(B,0)$ be a pointed smooth variety with the generic point $\eta$.
Let $\mathcal{X}$ be a locally trivial family of primitive symplectic varieties over $B$, and $L$ be a polarization on $\mathcal{X}_{\eta}$.
Then the set
\[
\mathrm{Tw}:=
\left\{ 
(\mathcal{Y}_{\eta}, M)
\left| 
\begin{array}{l}
\mathcal{Y} \textup{ is a locally trivial family } \\
\textup{of primitive symplectic varieties over $B$},\\
M \textup{ is a polarization on $\mathcal{Y}_{\eta}$}, \\
\textup{there exists $(\mathcal{Y}_{\overline{\eta}}, M_{\overline{\eta}}) \simeq_{\overline{\eta}} (\mathcal{X}_{\overline{\eta}}, L_{\overline{\eta}})$}
\end{array}
\right.\right\}
/ \simeq_{\eta}
\]
is a finite set.

This follows from the same proof as in \cite[Proposition 6.3]{FLTZ22} by using the Matsusaka--Mumford theorem (Proposition \ref{prop:birationalspecialization}) and the Hermite--Minkowski type theorem (\Cref{HermiteMinkowski}).
We sketch the argument in the following.
Let $ G:= \underline{\Aut}(\mathcal{X}_{\eta},L)$ be the automorphism group scheme, which is a finite group scheme by \Cref{lem:finitePolarizedAut}.
We may take a finite Galois extension $\eta'/\eta$ such that $G(\overline{\eta}) = G (\eta')$.
By the finiteness of 
\begin{equation}
\label{eqn:firstGalois}
H^1 (\Gal (\eta'/\eta),  \underline{\Aut}(\mathcal{Y}_{\eta'},M_{\eta'})),
\end{equation}
for any $(\mathcal{Y}_{\eta}, M) \in \mathrm{Tw}$, we may replace $\eta$ by $\eta'$, i.e.\ we may assume that $G(\overline{\eta}) = G(\eta)$.
Then each $(\mathcal{Y}_{\eta}, M) \in \mathrm{Tw}$ 
(more precisely, the isomorphism $f\colon (\mathcal{Y}_{\overline{\eta}}, M_{\overline{\eta}}) \simeq (\mathcal{X}_{\overline{\eta}}, L_{\overline{\eta}})$)
defines a 1-cocycle
\[
\alpha_{f} \colon
\Gal (\overline{\eta}/\eta) \rightarrow G(\overline{\eta}) = G(\eta),
\]
which is a group homomorphism.
For a codimension 1 point $b \in B$, let $I_{b} \subset \Gal (\overline{\eta}/\eta)$ be the inertia subgroup, which is defined after fixing the extension of valuation corresponding to $b$ to $\overline{F}$.
By the construction of $\alpha_{f}$, we have
\[
\overline{\psi} \circ \alpha_{f} (\sigma) = 1
\]
for $\sigma \in I_b$,
where 
\[
\overline{\psi} \colon G (\eta) \rightarrow  \Aut ((\mathcal{X}\setminus V)_{b})
\]
is the specialization morphism defined in \Cref{prop:birationalspecialization} with respect to $\cO_{B,b}$.
Since $\overline{\psi}$ is injective by \Cref{prop:birationalspecialization}, we have $ I_b \subset \ker \alpha_{f}$ for any $b$.
This shows that $\alpha_f$ factors through $\pi_1^{\et} (\Spec B, \overline{\eta})$ by Zariski--Nagata's purity.
Since $\# \ker \alpha_f \leq \# G(\eta)$, by \Cref{HermiteMinkowski}, there exists a finite Galois extension $\eta'/\eta$ that is independent of $(\mathcal{Y}_{\eta}, M)$ such that
$\alpha_{f} |_{\Gal (\overline{\eta}/\eta')}$ is trivial, i.e.\ $(\mathcal{Y}_{\eta'}, M_{\eta'}) \simeq (\mathcal{X}_{\eta'}, L_{\eta'})$.
By the finiteness of (\ref{eqn:firstGalois}) again,  it finishes the proof. \qed

\section{Finiteness of the generic fibers}\label{sec:Finiteness of the generic fibers}

As before, the base  \((B, 0)\) is a pointed smooth variety over an algebraically closed field \(k \) of characteristic zero, with generic point \(\eta\). Let \(X\) be a projective primitive symplectic variety over \(k\).
 The goal of this section is to prove the following refinement of Theorem \ref{thm:introgenfinite}:

\begin{theorem}\label{thm:FiniteGenericFibers}
Suppose that $X$ is $\QQ$-factorial terminal.  If \( b_2(X) \neq 4 \),  the set 
\begin{equation}\label{eq:genfiber}
\left\{ 
\mathcal{X}_{\eta}
\left| \begin{array}{l}
\mathcal{X} \textup{ is a locally trivial family} \\
\textup{of primitive symplectic varieties over $B$ }\\
\textup{such that $\mathcal{X}_0\simeq X$}\\

\end{array}
\right.\right\}
/ \simeq_{\eta},
\end{equation}
is finite, where \(\simeq_{\eta}\) denotes isomorphism of generic fibers.   If \( b_2(X) = 4 \), the same finiteness holds for non-isotrivial families in \eqref{eq:genfiber}.
\end{theorem}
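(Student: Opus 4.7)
The plan is to reduce the unpolarized statement to the polarized finiteness in Theorem \ref{thm:polarizedfiniteness} by bounding, uniformly across the set \eqref{eq:genfiber}, the degrees of weak polarizations attached to the families. By the Lefschetz principle (cf.\ Lemma \ref{lem:Q-factorialunderbasechange}) and standard spreading-out, we may assume $k = \CC$. For each family $\cX \to B$ in \eqref{eq:genfiber}, Proposition \ref{prop:RationallyExtended} produces a weak polarization $\underline{L}$ of some type $[h]$ with $h \in \Lambda \coloneqq \rH^2(X,\ZZ)_{\tf}$, together with an associated period morphism $B \to [\Gamma_h \backslash \Omega_{\Lambda_h}^{\pm}]$. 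If only finitely many values of $d \coloneqq q(h)$ occur, then Lemma \ref{lem:FiniteH} provides finitely many possible polarization types $[h]$, and Theorem \ref{thm:polarizedfiniteness} applied degree-by-degree yields the claim.

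To bound $d$, we apply the uniform Kuga--Satake construction (Theorem \ref{thm:uniformKugaSatake}) to the polarized weight-two variation $\underline{L}^{\perp}$ on $B$. This produces an abelian scheme $\cA \to B$ of fixed relative dimension (depending only on $b_2(X)$), factoring the period map through a Shimura variety of $\GSpin$-type. The new obstacle, absent in the arithmetic situation of \cite{FLTZ22}, is that the base fiber $\cA_0$ equals the Kuga--Satake abelian variety $\mathrm{KS}(X,h)$, which depends on the embedding of $\langle h \rangle$ into $\Lambda$ and hence varies across families. We handle this by invoking Proposition \ref{prop:KS-finiteness}: for each bounded $d$, there are only finitely many $O(\Lambda)$-orbits of primitive embeddings $\langle d \rangle \hookrightarrow \Lambda$, so $\cA_0$ falls into only finitely many isomorphism classes as long as $d$ is controlled.

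With the base fibers constrained to a finite list, we bootstrap by applying the geometric hyperbolicity of moduli of polarized abelian varieties (via Faltings' theorem \cite{RationalPoint} and \cite{JL23}) to the pointed Kuga--Satake families $\cA \to B$: such families with prescribed fiber at $0$ form a finite set up to isomorphism. By the quasi-finiteness of the Kuga--Satake period map (see also \cite{OS18}), the underlying polarized K3-type period morphisms---and hence $d$---are restricted to finitely many possibilities. Combined with Theorem \ref{thm:polarizedfiniteness}, this completes the proof in case (1).

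The case $b_2(X) = 4$ requires separate treatment because the orthogonal period domain $\Omega_{\Lambda_h}^{\pm}$ then has signature $(2,2)$ and the associated Shimura variety is essentially a product of modular curves, which fails to be geometrically hyperbolic; constant period maps corresponding to isotrivial families genuinely arise, and the non-isotriviality hypothesis in (2) excludes exactly this locus. The principal anticipated obstacle is precisely the novel issue flagged above: reconciling the varying base fiber $\cA_0$ of the Kuga--Satake abelian scheme with the fixed pointed structure $\cX_0 \cong X$. This is resolved by the lattice-embedding finiteness feeding into the geometric hyperbolicity of the Siegel moduli stack, as outlined.
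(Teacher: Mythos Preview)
Your overall strategy—reduce to the polarized finiteness in Theorem~\ref{thm:polarizedfiniteness} via the uniform Kuga--Satake construction—matches the paper's, but the bridge you build between them has a genuine gap.

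The central problem is your claim that finiteness of pointed Kuga--Satake families $\cA \to \widetilde{B}$ bounds the weak-polarization degree $d = q(h)$. It does not. A single family $\cX \to B$ admits weak polarizations of infinitely many degrees (multiples of $\underline{L}$, or other ample classes on the generic fiber), yet the \emph{uniform} Kuga--Satake variety of Theorem~\ref{thm:uniformKugaSatake} is built from the fixed auxiliary lattice $\Uni$ and the Hodge structure it carries, which depends only on the period of $\cX_b$, not on $h$. Quasi-finiteness of the Kuga--Satake map constrains the image in $[\Gamma_{\Uni}\backslash\Omega_{\Uni}]$, but the further lift to $[\Gamma_h\backslash\Omega_{\Lambda_h}]$ depends on a choice of primitive embedding $\Lambda_h \hookrightarrow \Uni$, and nothing here bounds $q(h)$. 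Your invocation of Proposition~\ref{prop:KS-finiteness} ``for each bounded $d$'' is circular: bounding $d$ is the goal.

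What the paper actually extracts from the Kuga--Satake argument is not a bound on $d$ but the finiteness, up to isometry, of the N\'eron--Severi lattice $\NS(\cX_{\overline{\eta}})$ (Corollary~\ref{cor:nsbound}, via Proposition~\ref{prop:FiniteTran}). A second, independent ingredient is then needed to pass from ``finitely many NS lattices'' to ``polarization of bounded degree on $\cX_{\eta'}$'': namely the lower bound on Beauville--Bogomolov squares of MBM classes (Proposition~\ref{prop:boundedMBM}), which is precisely what forces $b_2 \geq 5$. After a Hermite--Minkowski reduction making all of $\Pic(\cX_{\overline{\eta}})$ rational over a fixed finite cover $\eta'$, this yields an honest polarization of bounded degree, and Theorem~\ref{thm:polarizedfiniteness} together with the twist finiteness of Proposition~\ref{prop:finitenessoftwists} concludes. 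Your outline omits both the MBM step and the Galois/twist descent from $\overline{\eta}$ to $\eta$.

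This also explains the case split, which you have misdiagnosed. For $b_2(X)=4$ the lattice $\Lambda_h$ has signature $(2,1)$, not $(2,2)$, and the corresponding orthogonal Shimura variety is a modular curve—perfectly hyperbolic. The obstruction is rather that Proposition~\ref{prop:boundedMBM} (and likewise Proposition~\ref{prop:finitenessoftwists}) requires $b_2 \geq 5$. The paper instead observes that non-isotriviality forces geometric Picard rank one on the generic fiber, whence the NS-lattice finiteness alone bounds the polarization degree. The case $b_2 = 3$, where every family is isotrivial and a pure twist argument suffices, is absent from your proposal altogether.
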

Here, a locally trivial family $\cX\to B$ is called \textit{isotrivial} if there exists an étale surjective morphism \(B' \to B\), such that \(\mathcal{X}_{B'} \simeq X \times B'\) as \(B'\)-schemes.

\subsection{Construction of Uniform Kuga--Satake map}  
In the proof, we use the so‐called “uniform Kuga–Satake construction,” as treated in \cite{She}, \cite{OS18} and \cite{FLTZ22} as a variant of Zarhin’s trick for primitive symplectic varieties, to reduce the problem to the finiteness of families of polarized primitive symplectic varieties, which was established in \Cref{sec:FinitePolarizedHK}

{In this subsection, we work with the base field $ k= \CC$.} For any polarized weight-$2$ Hodge structure $\rV$ such that $\rV^{2,0}=1$ and $\rV\otimes \RR$ is of signature $(2,m)$, one can associate it with a polarized  abelian variety $(A_\rV,\Phi_a)$ of dimension $2^{m+1}$  with 
$$\rH^1(A_\rV,\ZZ)\cong \Cl(\rV)$$
where $\Cl(\rV) = \Cl^+(\rV) \oplus \Cl^-(V)$ is the Clifford algebra  of  $\rV$ with the $\ZZ/2$-grading. The polarization $$\Phi_a(x,y):=\mathrm{Trace}(\iota(x)ya)$$ on $\rH^1(A_{\rV},\ZZ)$  depends only on the lattice $\rV$ and an element $a \in \Cl^+(\rV)$ such that $\iota(a) = - a$, where $\iota$ is the involution on $\Cl(\rV)$ (cf.~\cite[\S 5.4]{Rizov10}). Its degree $d=d(a,\rV)$ can be computed explicitly in terms of $a$ and $\rV$. Moreover, there is a natural inclusion of sub-Hodge structures
\[
\rV(1) \hookrightarrow \End(\rH^1(A_\rV, \ZZ), \rH^1(A_\rV, \ZZ)).
\]
The abelian variety $A_\rV$ is called the \emph{full Kuga--Sataka variety} of $\rV$.

Let $\pi\colon \mathcal{X} \to B$ be a locally trivial family of primitive symplectic varieties, with a weak polarization $\underline{L}$. Let 
\[
\mathbf{P}^2(\cX/B) \coloneqq \underline{L}^{\bot} \subseteq R^2\pi_* \ZZ_{\tf}
\]
be the associated variation of Hodge structure of K3 type, where $(-)^{\bot}$ is the orthogonal complement with respect to the Beauville--Bogomolov form.

Let $\bA_{d,g,n}$ be the moduli stack of abelian varieties of dimension $g$, with polarization of degree $d^2$ and a level-$n$ structure. 
The  (relative) Kuga--Satake construction for $\mathbf{P}^2({ \pi})$ induces a map
\begin{equation}\label{eq:pks}
    B \to \bA_{d,g,n}
\end{equation}
after a finite \'{e}tale extension of $B$, where $g = 2^{b_2-2}$ and $d,n$ are some positive integers. At each $\CC$-point $b$ of $B$, the image is the Kuga--Satake variety $A_{L_b^{\bot}}$.
We point out that the polarized Kuga--Satake map \eqref{eq:pks}, and hence the degree $d$, depends on the polarization type  $h$ of $\underline{L}$. The following theorem is the main result of this section, the key point being that the \textit{uniform} Kuga--Satake map is independent of the family $\pi \colon \cX\to B$.

\begin{theorem}[Uniform Kuga--Satake]
\label{thm:uniformKugaSatake}
Let $X$ be a primitive symplectic variety. Let $(B,0)$ a pointed connected complex variety. There exist integers $d,g,n > 0$ and a finite étale covering $u: \widetilde{B} \to B$ such that for any locally trivial family of primitive symplectic varieties $\pi\colon \mathcal{X} \to B$ equipped with a weak polarization $\underline{L}$ and $\pi^{-1}(0) \cong X$, there is a morphism 
\[\KS_{\pi,\underline{L}}\colon \widetilde{B} \to \mathbf{A}_{d,g,n},\]
 $\ZZ$-variation of Hodge structure $\underline{\rH^{\sharp}}$ on $\widetilde{B}$ of weight zero that is independent of $\pi$, and an embedding of variations of Hodge structure  \begin{equation}\label{eq:HodgeembeddtoKS}
u^*\mathbf{P}^2(\cX/B)(1) \hookrightarrow \underline{\rH^{\sharp}} \hookrightarrow \underline{\End}(R^1p_{*}\mathbb{Z}),\end{equation}
where $p: \mathcal{A}_{\pi,\underline{L}} \to \widetilde{B}$ is the pulled-back via $\KS_{\pi,\underline{L}}$ of the universal  abelian scheme over $\mathbf{A}_{d,g,n}$.
\end{theorem}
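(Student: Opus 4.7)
My plan is to execute the uniform Kuga--Satake construction of \cite{She}, \cite{OS18}, \cite{FLTZ22} in the pointed, weakly-polarized setting, with the additional requirement that the cover $\widetilde{B}$, the integers $(d,g,n)$, and the VHS $\underline{\rH^{\sharp}}$ depend only on $(B, 0)$ and $X$, not on the particular family $\pi$ or its weak polarization $\underline{L}$.

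First I fix data determined by $X$ alone. Let $\Lambda \coloneqq \rH^2(X,\ZZ)_{\tf}$ with its Beauville--Bogomolov form $q_X$. Pointedness at $0$ canonically identifies $(R^2\pi_*\ZZ)_{\tf,0}$ with $\Lambda$, so every monodromy representation $\rho_\pi \colon \pi_1(B, 0) \to \rO(\Lambda, q_X)$ takes values in one and the same arithmetic group. Choose an element $a \in \Cl^+(\Lambda)$ with $\iota(a)=-a$ for which the trace pairing $\Phi_a$ is a polarization on $\Cl(\Lambda)$, and set $d \coloneqq \deg \Phi_a$, $g \coloneqq 2^{b_2(X) - 1}$, together with a fixed integer $n \geq 3$. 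Let $\Gamma_0 \subseteq \rO(\Lambda)$ be a finite-index subgroup that lifts to $\GSpin(\Lambda)$, acts trivially on $\Cl(\Lambda)/n\Cl(\Lambda)$, and centralizes $a$; such a $\Gamma_0$ exists, e.g.\ a sufficiently deep principal congruence subgroup.

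Next I construct the uniform cover. Since $B$ is of finite type, $\pi_1(B, 0)$ is finitely generated, so $\Hom(\pi_1(B,0), \rO(\Lambda)/\Gamma_0)$ is a finite set. Let $H \subseteq \pi_1(B, 0)$ be the intersection of the kernels of all these homomorphisms; this is a characteristic subgroup of finite index depending only on $(B,0)$ and $X$. Define $u\colon \widetilde{B}\to B$ to be the corresponding finite étale Galois cover. For every admissible $\pi$, the pulled-back monodromy then factors through $\Gamma_0$ and hence lifts canonically to $\GSpin(\Lambda)$. Equipping the fixed $\ZZ$-module $\Cl(\Lambda)$ with this lifted monodromy together with the fibrewise Kuga--Satake Hodge filtration (determined by $u^*R^2\pi_*\ZZ_{\tf}$) produces a polarizable weight-$(-1)$ VHS on $\widetilde{B}$, equivalently a polarized abelian scheme with level structure $p\colon \mathcal{A}_{\pi,\underline{L}}\to \widetilde{B}$ of dimension $g$, polarization degree $d$, and level $n$. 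The classifying map is $\KS_{\pi,\underline{L}} \colon \widetilde{B} \to \mathbf{A}_{d,g,n}$. Clifford left-multiplication $\Lambda \hookrightarrow \End_{\ZZ}(\Cl(\Lambda))$, $v \mapsto \ell_v$, is $\GSpin(\Lambda)$-equivariant and, after Tate-twisting, is a morphism of Hodge structures; the associated sub-VHS $\underline{\rH^{\sharp}} \hookrightarrow \underline{\End}(R^1p_*\ZZ)$ is constructed by a recipe that refers only to $\Lambda$ and the data of the previous paragraph, hence only to $X$. Composition with the natural inclusion $u^*\mathbf{P}^2(\mathcal{X}/B)(1) \hookrightarrow u^*R^2\pi_*\ZZ_{\tf}(1)$ yields the embedding asserted in the theorem.

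The decisive technical point is the construction of a single cover $\widetilde{B}$ that simultaneously controls the monodromy of all admissible families. A priori these representations form an uncontrolled set, but pointedness at $0$ fixes the target lattice, and the finite generation of $\pi_1(B,0)$ then reduces the problem to a finite set of homomorphisms into the one finite group $\rO(\Lambda)/\Gamma_0$. Ensuring that $\Phi_a$ and $n$ yield genuinely family-independent integers $d$ and $n$, and that the spin lift is compatible with the Kuga--Satake Hodge filtration on $\Cl(\Lambda)$, are the remaining subtleties; these follow the template of \cite{FLTZ22} once the uniform cover is in hand.
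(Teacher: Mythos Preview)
Your approach diverges from the paper's in one essential respect, and this divergence creates a genuine gap.

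The paper does \emph{not} apply Kuga--Satake to the full lattice $\Lambda = \rH^2(X,\ZZ)_{\tf}$, which has signature $(3, b_2-3)$. Instead it uses the weak polarization $\underline{L}$ to pass to the primitive sublattice $\Lambda_h = h^\perp$ of signature $(2,b_2-3)$, and then embeds every such $\Lambda_h$ (or $\Lambda_h(2)$) primitively into a single even unimodular lattice $\Uni$ of signature $(2,m)$ via Nikulin's theorem. The Kuga--Satake construction is applied to $\Uni$, so that the map $\KS\colon[\Gamma_{\Uni}\backslash\Omega_{\Uni}]\to\bA_{g,d,n}$ is a morphism of Shimura varieties and is manifestly independent of the family and of $h$; the uniform finite cover $\widetilde{B}\to B$ comes from a spin level structure on $G_h$ whose index is bounded independently of $h$ by \cite[Lemma 7.1]{FLTZ22}, together with the Hermite--Minkowski argument.

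Your construction, by contrast, runs the Kuga--Satake machine on $\Cl(\Lambda)$ for $\Lambda$ of signature $(3,b_2-3)$, choosing a fixed $a\in\Cl^+(\Lambda)$ and declaring that $\Phi_a$ is a polarization. This is where the argument fails. The standard verification that $\Phi_a(\cdot,J\cdot)$ is definite (for $J$ ranging over the period domain) relies on the signature being $(2,m)$: a direct computation in an orthogonal basis $e_1,e_2,e_3,f_1,\dots,f_m$ with $a=e_1e_2$ and $J_0=e_1e_2$ shows that the sign of $\Phi_a(e_I,J_0e_I)$ flips precisely according to whether $3\in I$, so the form is indefinite as soon as there is a third positive direction. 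Thus $\Cl(\Lambda)_\RR/\Cl(\Lambda)$ with complex structure $J$ is only a complex torus, not an abelian variety with a polarization of fixed degree, and you do not obtain a morphism to $\bA_{d,g,n}$. This is exactly why the paper (and \cite{She}, \cite{OS18}, \cite{FLTZ22}) route through the auxiliary signature-$(2,m)$ lattice $\Uni$.

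A secondary point: the condition that $\Gamma_0$ ``centralizes $a$'' is neither necessary nor achievable for a finite-index subgroup; what one actually needs for $\Phi_a$ to be monodromy-invariant is that the lift of $\Gamma_0$ land in $\mathrm{Spin}$ (spinor norm one), since $\Phi_a(gx,gy)=\nu(g)\Phi_a(x,y)$ for $g\in\GSpin$. Your clean construction of the uniform cover $\widetilde{B}$ via finiteness of $\Hom(\pi_1(B,0),\rO(\Lambda)/\Gamma_0)$ is correct and is essentially the same mechanism the paper uses (its \Cref{HermiteMinkowski}), but it cannot rescue the polarization issue above.
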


\begin{proof}
Let $\Lambda$ be the lattice determined by $\rH^2(X,\ZZ)_{\tf}$ with the Beauville--Bogomolov form. Let $ \underline{L}$ be a weak polarization on $\mathcal{X} \to B$ of type $h$.

Consider the subgroup of $\SO(\Lambda)(\ZZ)$ defined as
\[
G_h \coloneqq \Set{g \in \SO(\Lambda)(\ZZ)\given g h = h}.
\]
Fix a connected component $\Omega_{\Lambda_h} \subseteq \Omega_{\Lambda_h}^{\pm}$. The group $G_h$ naturally acts on $\Omega_{\Lambda_h}$. By passing to a degree two covering $B'$ of $B$, we may assume the period map $\cP_{\underline{L}^{\bot}}$ in \eqref{eq:PeriodMapWeakPolarization} lifted to a morphism
\[
B' \to [G_h \backslash \Omega_{\Lambda_{h}}].
\]

There is a unimodular even lattice $\Uni$ of signature $(2,m)$ such that for any $h$ with $q(h) >0$, lattice $\Lambda_h$ or $\Lambda_h(2)$ admits a primitive embedding  into $\Uni$ by \cite[Theorem 1.12.4]{Nikulin}. The Kuga--Satake construction gives a morphism of Deligne--Mumford stacks \[\KS \colon [\Gamma_{\Uni} \backslash \Omega_{\Uni}] \to \bA_{g,d,n},\] where $\Gamma_{\Uni} = \SO(\Uni) \cong \SO^+(\Uni)$ is the special orthogonal group of lattice $\Uni$ (since $\Uni$ is unimodular). There is an integer $N$ and a finite morphism of Deligne--Mumford stacks $j \colon [\Gamma_h^{\spin}(N)\backslash \Omega_{\Lambda_h}]\to [\Gamma_{\Uni}\backslash \Omega_{\Uni}]$,
where $\Gamma^{\spin}_h(N)$ is the arithmetic subgroup of $G_h$ given by a spin level-$N$ structure. The composition of $\KS$ and $j$ is denoted by 
\[
\beta_{h,N} = \KS \circ j \colon [\Gamma^{\spin}_h(N) \backslash \Omega_{\Lambda_h}] \to \bA_{g,d,n}.
\]
By \cite[Proposition 3.8(a)]{Pink}, we can see $\KS$ and $j$ are finite morphisms. Hence the same holds for $\beta_{h,N}$.

Consider the finite étale morphism of complex analytic stacks $[\Gamma_h^{\spin}(N) \backslash \Omega_{\Lambda_h}] \to [G_h\backslash \Omega_{\Lambda_h}]$ whose degree is equal to the index $m = [G_h : \Gamma_h^{\spin}(N)]$. By \cite[Lemma 7.1]{FLTZ22}, we can see
\[
m \leq C N^{2^{b_2(X) -2}},
\]
where $C$ is a constant depending only on $X$.
Taking the following Cartesian diagram
\[
\begin{tikzcd}
    \widetilde{B}_{\pi} \ar[r] \ar[d, "p_h"] & \Gamma_h^{\spin}(N) \backslash \Omega_{\Lambda_h}  \ar[d] \ar[r,"\beta_{h,N}"] & \bA_{g,d,n}\\
    B' \ar[r] & \lbrack G_{h} \backslash \Omega_{\Lambda_h}\rbrack.
\end{tikzcd}\]
The morphism $p_h$ is finite étale of degree $m$. 
{The following \Cref{HermiteMinkowski}, a topological version of the Hermite--Minkowski theorem, implies that there are only finitely many possible coverings $p_h$ when the family $\pi \colon \cX \to B$ varies. Thus we can take $\widetilde{B} \to B$ to be the (finite) fiber products of all such finite coverings $\widetilde{B}_\pi \to B$, which is equipped with a morphism $\widetilde{B} \to \Gamma_h^{\spin}(N) \backslash \Omega_{\Lambda_h}$ and is independent of the family $\pi$ as required.} 
Its composition with $\beta_{h,N}$ gives the required uniform Kuga--Satake map, which is quasi-finite since $\widetilde{B} \to \Gamma_h^{\spin}\backslash \Omega_{\Lambda_h}$ is quasi-finite.
\endproof

\end{proof}

\begin{lemma}\label{HermiteMinkowski}
Let $d$ be a positive integer. Suppose $B$ is a connected variety over $\CC$.
There are at most finitely many finite étale surjective morphisms $p \colon B' \to B$ with degree $\leq d$.
\end{lemma}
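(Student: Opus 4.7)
The plan is to translate the algebraic-geometric question into a purely topological one about the fundamental group, then invoke a standard finiteness result for finitely generated groups.

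First, I would appeal to the Riemann Existence Theorem (in the form of Grauert--Remmert for the analytification of finite \'etale covers of a complex variety, see \cite[SGA 1, XII]{SGA1}), which gives an equivalence between the category of finite \'etale covers of $B$ and the category of finite topological covers of $B^{\an}$. Under this equivalence, isomorphism classes of finite \'etale covers $p \colon B' \to B$ of degree $n$ correspond bijectively to isomorphism classes of finite $\pi_1(B^{\an})$-sets of cardinality $n$. In particular, connected covers of degree $n$ correspond to conjugacy classes of subgroups of index $n$ in $\pi_1(B^{\an})$, and general covers of degree $n$ decompose as disjoint unions of connected ones, whose isomorphism classes are indexed by unordered partitions of $n$ into such conjugacy classes.

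Next, the key input is that $\pi_1(B^{\an})$ is a finitely generated group. This follows from the fact that any complex variety admits a finite triangulation (it is a semi-algebraic, hence triangulable, set), so $B^{\an}$ has the homotopy type of a finite CW complex; alternatively, one can use a resolution of singularities and a compactification to reduce to the case of a smooth projective variety, whose fundamental group is finitely presented.

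Given that $\pi_1(B^{\an})$ is finitely generated, a classical theorem of M.~Hall asserts that for every integer $n$, the number of subgroups of index exactly $n$ is finite; indeed, if $\pi_1(B^{\an})$ is generated by $r$ elements, then the number of subgroups of index $n$ is at most $n \cdot (n!)^{r-1}$, since each such subgroup is the stabilizer of a point in a transitive action on $\{1,\dots,n\}$, hence determined by a homomorphism $\pi_1(B^{\an}) \to S_n$. Summing over $n \leq d$, the set of conjugacy classes of subgroups of index at most $d$ is finite, which yields finiteness of connected \'etale covers of degree at most $d$; combining with the partition argument above gives the full statement.

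The main (and really only) subtlety to verify is the finite generation of $\pi_1(B^{\an})$ for an arbitrary, possibly singular and non-proper complex variety $B$; everything else is formal. This can be settled by citing the triangulability of algebraic varieties (\L ojasiewicz/Hironaka) or by the compactification-plus-resolution reduction sketched above.
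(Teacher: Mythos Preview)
Your proposal is correct and follows essentially the same approach as the paper: both translate finite \'etale covers of degree $\leq d$ into homomorphisms from the fundamental group to the symmetric group $\mathfrak{S}_d$ (equivalently, into finite $\pi_1$-sets / subgroups of bounded index), and both invoke the finite generation of $\pi_1(B^{\an})$ via \L ojasiewicz/Hironaka to conclude. The only cosmetic difference is that the paper phrases things through the \'etale fundamental group $\pi_1^{\et}(B)$ and its topological finite presentation, whereas you pass through the Riemann Existence Theorem and work with $\pi_1(B^{\an})$ directly; these are equivalent formulations of the same argument.
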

\begin{proof}
Such a finite étale morphism of degree $d$ corresponds to a group homomorphism
\[
\pi_1^{\et}(B) \to \mathfrak{S}_d
\]
to the symmetric group of degree $d$.
   Since the topological fundamental group $\pi_1(B^{\an})$ is finitely presented by \cite{Hironaka} and \cite{Lojasiewicz}, its profinite completion $\pi_1^{\et}(B)$ is topologically finitely presented. Therefore, there are only finitely many such group homomorphisms.
\end{proof}

Let $X$ be a primitive symplectic variety over $\CC$ and $L$ a line bundle on $X$ with $q(L) >0$. Suppose that there is a locally trivial family $\pi \colon \cX \to B$ of primitive symplectic varieties such that $\pi^{-1}(0) \cong X$ for a closed point $0 \in B$ and $c_1(L) = \underline{L}_0$ for a weak polarization $\underline{L}$ over $\pi \colon \cX \to B$. 
Let $\widetilde{B} \rightarrow B$ be as in Theorem \ref{thm:uniformKugaSatake}, and we fix $\widetilde{0} \in \widetilde{B} (\CC)$ that is a lift of $0 \in B(\CC)$.
Denote \begin{equation}\label{eq:KS_notation}
    \KS_{\pi,\underline{L}}(X) \in \bA_{d,g,n}(\CC)
\end{equation}
for $\KS_{\pi,\underline{L}}|_{\widetilde{0}}$.
\begin{proposition}\label{prop:KS-finiteness}
  Let \( X \) be a primitive symplectic variety over $\CC$. The following set of uniform Kuga--Satake varieties is finite:
  \[
    \Set*{\KS_{\pi,\underline{L}}(X) \given \parbox{50ex}{$\pi\colon \mathcal{X}\to B$ is a locally trivial family of primitive symplectic varieties with weak polarization \( \underline{L} \) such that $\cX_0 \coloneqq \pi^{-1}(0) \cong X$} }.
  \]
\end{proposition}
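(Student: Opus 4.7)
The plan is to exploit the factorization of the uniform Kuga--Satake map through the intermediate period orbifold $[\Gamma_{\Uni} \backslash \Omega_{\Uni}]$ introduced in the proof of \Cref{thm:uniformKugaSatake}, and then reduce the finiteness statement to Nikulin's theorem on finiteness of primitive lattice embeddings applied to the fixed transcendental lattice of $X$.

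First, by construction of the uniform Kuga--Satake map, for every pair $(\pi, \underline{L})$ the morphism $\KS_{\pi, \underline{L}} \colon \widetilde{B} \to \bA_{d,g,n}$ factors as
\[
\widetilde{B} \to [\Gamma_h^{\spin}(N) \backslash \Omega_{\Lambda_h}] \xrightarrow{j} [\Gamma_{\Uni} \backslash \Omega_{\Uni}] \xrightarrow{\KS} \bA_{d,g,n},
\]
with $\KS \circ j = \beta_{h,N}$ finite. Since $\KS$ is itself finite, it suffices to show that, as $(\pi, \underline{L})$ ranges over the allowed pairs, the images of $\widetilde{0}$ in $[\Gamma_{\Uni} \backslash \Omega_{\Uni}]$ form a finite set. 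Writing $h \coloneqq c_1(\underline{L}_0) \in \NS(X) \subset \Lambda \coloneqq \rH^2(X,\ZZ)_{\tf}$ and denoting by $\iota_h \colon \Lambda_h \hookrightarrow \Uni$ (or $\Lambda_h(2) \hookrightarrow \Uni$) the chosen primitive embedding used to build the uniform construction, this image is exactly the $\Gamma_{\Uni}$-orbit of the period line $[\iota_h(\sigma_X)] \in \Omega_{\Uni}$, where $\sigma_X \in \Lambda \otimes \CC$ is the class of the holomorphic symplectic form, a vector fixed once $X$ is fixed.

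Next, I would introduce the transcendental lattice $T \coloneqq \NS(X)^{\bot} \subset \Lambda$, which depends only on $X$. Because $h$ is a Hodge $(1,1)$-class, $\sigma_X \in T \otimes \CC \subset \Lambda_h \otimes \CC$, so $\iota_h(\sigma_X) = \iota_h|_T(\sigma_X)$ depends only on the restricted embedding $\iota_h|_T \colon T \hookrightarrow \Uni$ (or $T(2) \hookrightarrow \Uni$). A short check shows that $T$ is primitive in $\Lambda_h$ --- any $v \in \Lambda_h$ with $nv \in T$ lies in $T$ by primitivity of $T$ inside $\Lambda$ --- and since $\Lambda_h$ is primitive in $\Uni$, the composition $\iota_h|_T$ is again a primitive embedding. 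Thus we have associated to every $(\pi, \underline{L})$ a primitive embedding of the \emph{single fixed} lattice $T$ (or $T(2)$) into $\Uni$, and the image in $[\Gamma_{\Uni} \backslash \Omega_{\Uni}]$ is determined by the $\Gamma_{\Uni}$-orbit of this embedding. By Nikulin's finiteness of primitive lattice embeddings (cf.\ \cite[Satz (30.2)]{Kneser02} and the proof of \Cref{lem:FiniteH}), there are finitely many $\mathrm{O}(\Uni)$-orbits of such embeddings, hence finitely many $\Gamma_{\Uni}$-orbits since $\Gamma_{\Uni}$ has finite index in $\mathrm{O}(\Uni)$. Composition with the finite map $\KS$ yields the asserted finiteness in $\bA_{d,g,n}(\CC)$.

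The main subtlety lies in the middle step: verifying that the invariant of $(\pi, \underline{L})$ relevant for $\KS_{\pi,\underline{L}}(X)$ can genuinely be encoded as a primitive embedding of the \emph{fixed} transcendental lattice $T$ (independent of $h$) into $\Uni$. Without this reduction one would need to deal with infinitely many $h \in \NS(X)$ of unbounded Beauville--Bogomolov square and with the variable lattices $\Lambda_h$, to which Nikulin's finiteness does not directly apply; once the reduction to $T$ is in place, the rest of the argument is immediate.
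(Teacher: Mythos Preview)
Your proof is correct and follows essentially the same strategy as the paper: both reduce the question to the finiteness of primitive embeddings of the fixed transcendental lattice $T(X)$ into the universal lattice $\Uni$, and then invoke Nikulin's (or Kneser's) finiteness result. Your version is slightly more explicit in routing the argument through the factorization $\widetilde{B} \to [\Gamma_{\Uni}\backslash\Omega_{\Uni}] \to \bA_{d,g,n}$ and in checking the primitivity of $\iota_h|_T$ and the finite-index passage from $\rO(\Uni)$ to $\Gamma_{\Uni}$, while the paper phrases the same content as ``the Hodge structure on $\Uni$ is determined by $\rT(X)$ together with the embedding $\iota_L$''; the underlying idea is identical.
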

\begin{proof}
The key observation is that for any weak polarization \( \underline{L} \), the associated polarized abelian variety \( \KS_{\pi,\underline{L}}(X)\in \bA_{d,g,n} \) is completely determined by the induced polarized integral Hodge structure on $\rH^{\sharp}$ together with a level structure, and the integral Hodge structure is given by the following datum: 
  \begin{itemize}[leftmargin=*]
    \item The transcendental Hodge structure \( \rT(X) \), which is determined by  \( X \) up to isometry;
    \item A primitive lattice embedding \( \iota_L: \rT(X) \hookrightarrow c_1(L)^\perp \hookrightarrow \Uni \), where \( \Uni \) denotes the universal Kuga--Satake lattice given in \Cref{thm:uniformKugaSatake}.
  \end{itemize}
Since the period satisfies 
\[
(\rH^{\sharp})^{2,0} = \iota_L\left(\rT(X)^{2,0} \right)
\]in $\rH^{\sharp}\otimes \CC$ by \eqref{eq:HodgeembeddtoKS}, it is sufficient to see the finiteness of such primitive embeddings $\iota_L$. For this, we can use Nikulin's theorem \cite[Theorem 3.6.3]{Nikulin}, which asserts that for a fixed even integral lattice \( \Uni \) and a sublattice \( \rT(X) \) of signature \( (2, n) \), there exist only finitely many primitive embeddings 
  \(\rT(X) \hookrightarrow \Uni\) up to the action of \( \rO(\Uni) \). 
\end{proof}

\subsection{Finiteness of geometric N\'eron--Severi lattices }

We first establish the finiteness of the Néron-Severi lattice of closed fibers on pointed families.

\begin{proposition}\label{prop:FiniteTran} Let $(B,0)$ be a connected smooth variety over $\CC$. Let $X$ be a primitive symplectic variety.
Let \(b \in B\) be a fixed closed point. Then the set of \emph{Néron–Severi lattices at $b$}
    \[
    \NS_{b} \coloneqq \left\{ \NS(\mathcal{X}_{b}) \ \Bigg| ~\cX\to B~\text{is a pointed family as in \eqref{eq:pointshaf}} \right\}{\big/ \text{isometry}}
    \]
is finite. 
\end{proposition}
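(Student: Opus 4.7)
\emph{Plan.} I will follow the strategy of Step 1 in the proof of \Cref{thm:polarizedfiniteness}, applying the uniform Kuga--Satake construction together with the geometric hyperbolicity of the Siegel modular variety to pass from finiteness at the base point $0$ to finiteness at $b$. By \Cref{prop:RationallyExtended}, each family $\pi \colon \cX \to B$ in the Shafarevich set \eqref{eq:pointshaf} admits a weak polarization $\underline{L}$; applying \Cref{thm:uniformKugaSatake} yields a finite étale cover $u \colon \widetilde{B} \to B$ and integers $d, g, n$ all independent of $\pi$, together with a Kuga--Satake morphism $\KS_{\pi, \underline{L}} \colon \widetilde{B} \to \bA_{d,g,n}$. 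Fix lifts $\widetilde{0}, \widetilde{b} \in \widetilde{B}$ of $0$ and $b$. By \Cref{prop:KS-finiteness}, the values $\KS_{\pi, \underline{L}}(\widetilde{0})$ form a finite set as $(\pi, \underline{L})$ varies, thanks to Nikulin's finiteness of primitive lattice embeddings $\rT(X) \hookrightarrow \Uni$ (a statement that is independent of the polarization type $h$).

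Next, I appeal to the (pointed) geometric hyperbolicity of $\bA_{d,g,n}$---a consequence of Faltings' theorem via \cite{JL23}, exactly as invoked in the polarized case---to conclude that the set of pointed morphisms $(\widetilde{B}, \widetilde{0}) \to (\bA_{d,g,n}, z)$ is finite for each prescribed image $z$; if $B$ is higher-dimensional, one first restricts to a general smooth curve through $0$ and $b$ and lifts to $\widetilde{B}$. Combined with the previous step, the set of Kuga--Satake morphisms $\KS_{\pi, \underline{L}}$ is finite, and hence so is the set of values $\KS_{\pi, \underline{L}}(\widetilde{b})$.

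To finish, I recover the Hodge structure on $\Lambda \coloneqq \rH^2(\cX_b, \ZZ)_{\tf}$ at $b$ from the finitely many Kuga--Satake abelian varieties $\mathcal{A}_{\widetilde{b}}$ that arise. Each such $\mathcal{A}_{\widetilde{b}}$ determines, via the finite Kuga--Satake period morphism from $[\Gamma_\Uni \backslash \Omega_\Uni]$ to $\bA_{d,g,n}$, the Hodge structure on $\Uni$ at $\widetilde{b}$; combined with the finite choice of primitive embedding $\Lambda_h \hookrightarrow \Uni$ afforded by Nikulin's theorem, this reconstructs the Hodge structure on $\Lambda_h$ at $b$. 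Since the weak polarization class $h$ is of type $(1,1)$ by definition, one then obtains the Hodge structure on $\Lambda$ itself, and hence $\NS(\cX_b) = \Lambda \cap \rH^{1,1}(\cX_b, \RR)$ by the Lefschetz $(1,1)$-theorem. This yields only finitely many possibilities for $\NS(\cX_b)$, and a fortiori for its isometry class.

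The main obstacle will be the simultaneous control of the Kuga--Satake map and the varying polarization type $h(\pi)$: one must arrange that as $(\pi, \underline{L})$ ranges over the Shafarevich set, the collected data $(\mathcal{A}_{\widetilde{b}}, \Lambda_{h} \hookrightarrow \Uni)$ remain finite, so that Nikulin's lattice-embedding finiteness combines cleanly with the hyperbolicity-based finiteness of pointed morphisms to bound the Hodge structure on $\Lambda$ at $b$. A secondary technical point is the reduction from higher-dimensional $B$ to the curve case for applying geometric hyperbolicity of $\bA_{d,g,n}$, which proceeds exactly as in the polarized case of \Cref{thm:polarizedfiniteness}.
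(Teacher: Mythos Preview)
Your proposal follows the paper's approach almost verbatim: pass to the fixed \'etale cover $\widetilde{B}$ from \Cref{thm:uniformKugaSatake}, use \Cref{prop:KS-finiteness} to bound $\KS_{\pi,\underline{L}}(\widetilde{0})$, invoke the geometric hyperbolicity of $\bA_{d,g,n}$ to bound the morphisms $\KS_{\pi,\underline{L}}$, and hence bound the Kuga--Satake varieties at $\widetilde{b}$.

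The one place that is not yet closed is your final recovery step, which you yourself flag as ``the main obstacle'' without resolving it. Applying Nikulin to the embeddings $\Lambda_h \hookrightarrow \Uni$ does \emph{not} give finiteness, because the sublattice $\Lambda_h$ varies with the a priori unbounded polarization type $h = h(\pi)$; Nikulin's theorem fixes both the source and target lattices. The paper sidesteps this exactly as in \cite[Theorem~7.4]{FLTZ22}: from the (finitely many) Hodge structures on $\Uni$ at $\widetilde{b}$ one reads off the transcendental lattice $\rT(\cX_b)$ intrinsically as the smallest primitive sub-Hodge structure containing the $(2,0)$-part --- this is independent of $h$ --- and then applies Nikulin to the embeddings $\rT(\cX_b) \hookrightarrow \Lambda$ into the \emph{fixed} Beauville--Bogomolov lattice $\Lambda = \rH^2(X,\ZZ)_{\tf}$. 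Since $\NS(\cX_b) = \rT(\cX_b)^{\perp}$ in $\Lambda$, this yields the desired finiteness. Your reduction to a curve through $0$ and $b$ is harmless but unnecessary: the hyperbolicity input from \cite{JL23} already handles pointed varieties.
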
  

\begin{proof} 

Let \(\pi: \widetilde{B} \to B\) be the finite étale covering given in Theorem~\ref{thm:uniformKugaSatake} and fix a point \(\widetilde{b} \in \widetilde{B}\) (resp.\ $\widetilde{0} \in \widetilde{B}$) over \(b\) (resp.\ $0$).  Then we obtain a morphism 
\[\varphi: \widetilde{B}\to \bA_{d,g,n}\]
for any $\mathcal{X}\rightarrow B \in \eqref{eq:pointshaf}.$
Thanks to \Cref{prop:KS-finiteness}, though the map $\varphi$ depends on $\cX\to B$, all possible $\varphi(\widetilde{0})$ form a finite set. Therefore, we can fix $\varphi(\widetilde{0})$. The geometric hyperbolicity of \(\bA_{g,d,n}\) (see \cite{JL23}) implies that  there are only finitely many possibilities for $\varphi$. 

Thus for any fixed $b\in B$, the associated uniform Kuga--Satake variety of $\cX_b$  only has only finitely many possibilities.  
Then the  same argument in \cite[Theorem 7.4]{FLTZ22} (using the Lefschetz (1,1) theorem instead of the Tate conjecture) shows that $\NS(\cX_b)$ has finitely many possibilities. 
\end{proof}

As an application, we get the following consequence.
\begin{corollary}
\label{cor:nsbound}
Let $(B,0)$ be a pointed smooth variety over $k$  with the generic point $\eta$.
Let $X$ be a primitive symplectic variety over $k$.
Then
\begin{equation}\label{eq:NSShafSet}
\NS_{\Shaf}\coloneqq
\left\{ 
\NS(\mathcal{X}_{\overline{\eta}})
\left| \begin{array}{l}
\mathcal{X} \textup{ is a locally trivial family} \\
\textup{of primitive symplectic varieties over $B$}\\
\textup{such that $\mathcal{X}_0\simeq X$}\\
\end{array}
\right.\right\}
/ \textup{isometry}.
\end{equation}
is a finite set.
\end{corollary}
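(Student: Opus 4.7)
The plan is to run the argument in the proof of \Cref{prop:FiniteTran} at the generic point of $\widetilde{B}$ rather than at a fixed closed point. First, by spreading out and invariance of the Néron--Severi lattice under algebraically closed extensions of the base field in characteristic zero (as in \Cref{lem:Q-factorialunderbasechange}), I would reduce to the case $k = \CC$. For each family $\mathcal{X} \to B$ in \eqref{eq:pointshaf}, \Cref{prop:RationallyExtended} yields a weak polarization $\underline{L}$, and \Cref{thm:uniformKugaSatake} then produces a morphism
\[
\KS_{\pi,\underline{L}} \colon \widetilde{B} \to \bA_{d,g,n},
\]
where the finite étale cover $u\colon \widetilde{B} \to B$, the integers $(d,g,n)$, and the ambient weight-zero VHS $\underline{\rH^{\sharp}}$ on $\widetilde{B}$ are all independent of the family. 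Fix a lift $\widetilde{0} \in \widetilde{B}$ of $0 \in B$.

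Next, I would combine \Cref{prop:KS-finiteness} (finiteness of $\KS_{\pi,\underline{L}}(\widetilde{0})$ up to isomorphism as $(\pi,\underline{L})$ varies) with the geometric hyperbolicity of $\bA_{d,g,n}$ (Faltings, see \cite[Theorem 1.7]{JL23}) to conclude that $\KS_{\pi,\underline{L}}$ itself ranges in a finite set of morphisms as $\mathcal{X}$ varies. In particular, the image of the generic point $\KS_{\pi,\underline{L}}(\widetilde{\eta})$, and hence the Kuga--Satake abelian variety $A_{\widetilde{\overline{\eta}}}$ over $\overline{\CC(\widetilde{B})}$, takes only finitely many isomorphism classes.

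Finally, I would transfer this finiteness to Néron--Severi lattices. Since $\underline{\rH^{\sharp}}$ is a fixed VHS and $\mathbf{P}^{2}(\mathcal{X}_{\widetilde{\overline{\eta}}})(1)$ sits as a primitive sub-Hodge structure of the (now determined) Hodge structure $\underline{\rH^{\sharp}}_{\widetilde{\overline{\eta}}}$ via the embedding in \Cref{thm:uniformKugaSatake}, Nikulin's theorem on primitive lattice embeddings (used exactly as in the proof of \Cref{prop:KS-finiteness}) bounds the number of possible sub-Hodge structures up to isometry. Together with the finiteness of weak polarization types (\Cref{lem:FiniteH}), this yields finiteness of the polarized $\ZZ$-Hodge structure on $\rH^{2}(\mathcal{X}_{\overline{\eta}},\ZZ)_{\tf}$ up to isomorphism. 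The Lefschetz $(1,1)$ theorem then identifies $\NS(\mathcal{X}_{\overline{\eta}})$ with the sublattice of Hodge $(1,1)$-classes, yielding only finitely many isometry classes.

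The main obstacle I expect is the last step: passing from finiteness of the Kuga--Satake abelian variety (over a non-algebraically-closed field like $\overline{\CC(\widetilde{B})}$, as opposed to over $\CC$ where the naive period-theoretic picture applies verbatim) back to finiteness of the Néron--Severi lattice of the geometric generic fiber. This relies crucially on the uniformity statement in \Cref{thm:uniformKugaSatake}---namely that $\underline{\rH^{\sharp}}$ is the same VHS for all families in \eqref{eq:pointshaf}---so that Nikulin's finiteness of primitive embeddings into a \emph{fixed} ambient lattice applies. Modulo this, the argument is a direct analogue of \Cref{prop:FiniteTran} applied at the generic point rather than at a closed point.
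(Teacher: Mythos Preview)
Your approach is broadly sound and parallels the paper's in spirit, but the paper takes a different route to circumvent exactly the obstacle you identify.

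The paper does \emph{not} attempt to run the Kuga--Satake argument at the geometric generic point. Instead it reduces the corollary to \Cref{prop:FiniteTran} at an honest closed $\CC$-point, in two steps. First, it observes (via \Cref{thm:polarizedfiniteness}) that the set of generic fibers in \eqref{eq:genfiber} is \emph{countable}, so one may choose a countable collection $\{\cX_i \to B\}_{i\in \NN}$ realising all of them; by spreading out one may take $k=\CC$. Second, it invokes \Cref{lem:generalpointPicard} to find a single closed point $b_0\in B(\CC)$ with $\NS(\cX_{i,b_0})\cong \NS(\cX_{i,\overline\eta})$ for every $i$ simultaneously. This identifies $\NS_{\Shaf}$ with $\NS_{b_0}$ and finishes by \Cref{prop:FiniteTran}. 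The countability step is precisely what guarantees that a \emph{single} very general point works for all families at once.

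Your direct attack at $\widetilde{\overline\eta}$ would work, but the final step needs more than you supply. There is no classical Hodge structure at $\widetilde{\overline\eta}$, so speaking of ``the Hodge structure $\underline{\rH^\sharp}_{\widetilde{\overline\eta}}$'' and applying Lefschetz $(1,1)$ is not literally meaningful; you must first choose an embedding $\overline{k(\widetilde B)}\hookrightarrow\CC$ (equivalently, pass to a very general closed point) and use invariance of $\NS$ under extension of algebraically closed fields. Also, your appeal to ``$\underline{\rH^\sharp}$ is a fixed VHS'' is delicate: what you actually have, and what suffices, is that the finitely many morphisms $\KS_{\pi,\underline L}$ yield finitely many pulled-back VHS's on $\widetilde B$; and \Cref{lem:FiniteH} is not what is needed here (it bounds types for a fixed degree, whereas the degrees of the weak polarizations are a priori unbounded). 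Once these points are handled, your argument collapses to the paper's: the choice of embedding into $\CC$ is the very general point, and the remaining Hodge-theoretic step is exactly the content of \Cref{prop:FiniteTran}.
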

\begin{proof}
\Cref{prop:RationallyExtended} and its proof imply that any such pointed locally trivial family $\cX/B$ admits polarization on its generic fiber. Therefore, the set \eqref{eq:genfiber} of generic fibers is a countable set by \Cref{thm:polarizedfiniteness}. Therefore, there exists a countable set of locally trivial families
    \[
    I \coloneqq \Set*{ f_i  \colon \cX_i \to B}_{i \in \NN}
    \]
    such that the set \eqref{eq:genfiber} is equal to  $\Set{\cX_{i,\eta} \given \cX_i \in I}$. 
    By the spreading-out argument, we may assume that the elements in $I$ are defined over a countable extension of $\QQ$. Therefore, we may assume that $k =\CC$.
    \Cref{lem:generalpointPicard} implies that there is a point $b_0 \in B (\CC)$ such that for any $Y$ in \eqref{eq:genfiber}, there is a locally trivial family of primitive symplectic varieties $f \colon \cX \to B$ with  $\cX_{\eta} \cong Y$, and  $\NS(\cX_{b_0}) \cong \NS(Y_{\overline{{\eta}}})$. Thus there is a point $b_0\in B(\CC)$ such that
\[
\NS_{\Shaf} = \NS_{b_0}.
\]
Now the statement follows from \Cref{prop:FiniteTran}.
\end{proof}

\subsection{Proof of \Cref{thm:FiniteGenericFibers}}

We split the proof into three parts according to the second Betti number.

(1).~
If $b_2(X) = 3$, then any locally trivial algebraic deformation of $X$ is trivial, so the geometric fibers of $\cX \to B$ are all isomorphic to $X$.  %
In this case, we have $\mathcal{X}_{\overline{\eta}} \simeq X \times_k \overline{\eta}$ and $\rho (\mathcal{X}_{\overline{\eta}}) =1.$
Therefore, one can use the same argument as in Step (2) of Proof of Theorem \ref{thm:polarizedfiniteness} to obtain the finiteness.

(2).~ {If $b_2(X) =4$, note that the statement only concerns non-isotrivial families. We claim that if  $\cX\to B$ is not isotrivial, then the very  general fiber of $\cX\to B$ has Picard number one. Indeed, if the geometric generic fiber has Picard number 2, then after shrinking $B$, we may assume that $\cX \to B$ is projective and admits a rank two lattice-polarization in the sense of \cite{LLX24}.  Since the deformation space of locally trivial rank two lattice-polarized primitive symplectic varieties is trivial, this forces  $\cX\to B$ to be isotrivial (see also \cite{BKPS98}, \cite{Oguiso} for a different approach), contradicting to the hypothesis.  Therefore, we proved that for any non-isotrivial family $\cX\to B$,  its geometric generic fiber $\cX_{\overline{\eta}}$ has Picard number one.  Then, by \Cref{cor:nsbound}, the minimal polarization degree of $\mathcal{X}_{\eta}$ is bounded. %
We obtain the desired result by \Cref{thm:polarizedfiniteness}. }

(3). Suppose that $b_2(X) \geq 5$.
Given an element $\mathcal{X}_{\eta} \in \Shaf_{B,X}$, let $\rho = \rho(\cX_{\bar{\eta}})$ be the geometric Picard number of $\cX_{\eta}$ and $m = b_2(\cX_{\eta})$ be its second Betti number. For simplicity, we assume that $\Pic(\cX_{\bar{\eta}})$ is torsion-free. Otherwise, we can replace it by the torsion-free part $\Pic(\cX_{\bar{\eta}})_{\tf}$. Note that the image of the representation
\[ 
r\colon \Gal (\overline{\eta}/\eta) \rightarrow \GL (\Pic (\mathcal{X}_{\overline{\eta}}))
\]
is finite and its order is bounded by $3^{\rho^2} \leq 3^{m^2}$, since we have the following short exact sequence (by considering coefficient-wise modulo 3 map for integral matrices):
\[
0 \rightarrow 1 + 3\mathrm{Mat}_{\rho\times \rho} (\ZZ) \rightarrow \GL (\Pic(X_{\overline{\eta}})) \rightarrow \GL_{\rho} (\ZZ/3\ZZ) \rightarrow 0,
\]
and the subgroup $1 + 3\mathrm{Mat}_{\rho\times \rho} (\ZZ) $ is torsion-free. Note that the first Chern class map induces an $\Gal(k(\bar{\eta})/k(\eta))$-equivariant injection
\[
\Pic(\cX_{\bar{\eta}}) \hookrightarrow \rH^2_{\et}(X_{\bar{\eta}},\widehat{\ZZ}) \coloneqq \prod_{\ell \text{ prime}} \rH^2_{\et}(\cX_{\bar{\eta}},\ZZ_{\ell}).
\]
We claim that the higher direct image $R^2\pi_*^{\et}\ZZ/n$ is locally constant for any integer $n \geq 1$. This property is stable
under algebraically closed extensions; thus, it is sufficient to assume $k=\CC$.
\Cref{prop:PropLocallyTrivial} (2) implies that $R^2\pi_*^{\an} \ZZ/n$ is finite locally constant for any integer $n$. Therefore, by \cite[Exposé XI, Théorème 4.4.]{SGA4}, the étale sheaf $R^2\pi_*^{\et}\ZZ/n$ is also finite locally constant since $B$ is smooth\footnote{In general, $R^i\pi^{\et}_*\ZZ/n$ are locally constant without the smoothness of $B$. See \Cref{lem:localconstantlocalTrivial}.}.
Thus, the representation $r$ factors through a morphism \[
\pi_1^{\et}(B) \to \GL(\Pic(\cX_{\bar{\eta}}))\]
along the natural surjection $\Gal(k(\bar{\eta})/k(\eta)) \twoheadrightarrow \pi_1^{\et}(B)$.
Then, by the Hermite--Minkowski type theorem (see \Cref{HermiteMinkowski}), we may take a finite étale morphism $B' \rightarrow B$ from a complex smooth variety such that for any $\mathcal{X}_{\eta} $ in \eqref{eq:genfiber},
\[
\rL \coloneq\Pic_{\mathcal{X}_{\eta'}/\eta'} (\eta') = \Pic (\mathcal{X}_{\overline{\eta}}).
\]
Here, $\eta'$ is the generic point of $B'$. Therefore, in this case, the set \eqref{eq:genfiber} is bijective to the following set
\begin{equation}\label{eq:Union with Marked Geometric Picard}
    \bigcup_{\rL \in \NS_{\Shaf}}\Set*{\cX_{\eta} \given \parbox{43ex}{$\cX$ is a locally trivial family of primitive symplectic varieties over $B$ such that $\cX_0 \cong X$ and $\Pic_{\cX_{\eta'}/\eta'}(\eta') \cong \rL$} } 
\end{equation}
By the argument as in \cite[Lemma 4.3.1]{takamatsu2022}, the lower bound of the squares of MBM classes (\Cref{prop:boundedMBM}) indicates that there exists a positive integer $N(\rL)$ such that, for any $\cX_{\eta}$ in \eqref{eq:Union with Marked Geometric Picard} with $\Pic_{\mathcal{X}_{\eta'}/\eta'}(\eta') \cong \rL$, the base extension $\cX_{\eta'} = \cX_{\eta} \times_{\eta} \eta' $ admits a polarization of degree $ \leq N(\rL)$. The integer $N(\rL)$ only depends on the lattice $\rL$ (and the deformation type of $X$). \Cref{thm:polarizedfiniteness}  and \Cref{prop:finitenessoftwists} imply that the set \eqref{eq:Union with Marked Geometric Picard}  is a union of finite sets. Finally, we can see that the set \eqref{eq:Union with Marked Geometric Picard} is finite since $\NS_{\Shaf}$ is finite by \Cref{cor:nsbound}. \qed

\section{Finiteness of projective models and counter-examples}
Let $X$ be a primitive symplectic variety and $C$ a smooth connected curve. According to \Cref{thm:FiniteGenericFibers}, $X$ has only finitely many birational models over $C$. In this section, we utilize finiteness results from cone conjectures in \Cref{subsec:RelativeCone} to establish the finiteness of isomorphism classes for these models. Furthermore, we illustrate with an example that the assumption of projectivity for families is essential for this inquiry.
\subsection{Proof of \Cref{thm:introisomfinite}}
Let $k $ be an algebraically closed field of characteristic zero.
For simplicity, we assume $(C,0)$ is a smooth integral pointed curve over $k$ and $X$ a $\QQ$-factorial primitive symplectic variety with terminal singularities.  By \Cref{thm:FiniteGenericFibers}, we can see there are only finitely many primitive symplectic varieties $Y$ over the function field $K = k(C)$, such that there is a projective locally trivial family $\cX/C \in \Shaf(C,X)$ with $Y \cong \cX_{K}$. 

Let $\cX'/C \in \Shaf(C,X)$ be another projective locally trivial family such that $\cX'_K \cong Y$ over $K$. 
Since $C$ is normal, there is a Zariski covering
\[
C = \bigcup_{s\in C \text{ closed point}} C_s
\]
that $C_s$ is the localization of $C$ at $s$.
For any closed point $s \in C$, \Cref{prop:unpolarizedMatsusaka-Mumford} implies that it can be uniquely extended to a birational equivalence $\cX_{C_s} \dashrightarrow  \cX'_{C_s}$ over $C_s$ such that the restriction $\cX_s \dashrightarrow \cX_s' $ is a birational equivalence. We can glue it to a birational equivalence $ g\colon \cX \dashrightarrow \cX'$ over $C$. By the construction, we can see $g$ is an isomorphism in codimension one. By \Cref{lemma:Total Space Qfactorial},  the total space $\cX$ of any $\cX/C \in \Shaf(C,X)$ is $\QQ$-factorial, and $g$ is a small $\QQ$-factorial modification over $C$ by the previous discussion. Now, we can apply \Cref{cor:finiteSQM} to conclude it. \qed
\begin{remark}
    \Cref{thm:introisomfinite} holds for pointed regular base variety $(B,0)$ by localizing at codimension-one points.
\end{remark}
\subsection{Proof of \Cref{cor:introFiniteKnonwTypes}}\label{sec:proofofKnownTypes}
Note that the condition in \Cref{thm:introisomfinite} follows from the SYZ conjecture, which says that, on a primitive symplectic variety, the linear system of any isotropic nef line bundle induces a Lagrangian fibration, and in particular, the nef line bundle is semiample.
The SYZ conjecture has been confirmed for smooth irreducible symplectic varieties of all known deformation types:
\begin{itemize}
    \item For $\operatorname{K3}^{[n]}$-type, see \cite[Theorem 1.5]{BayerMacri}; 
    \item for $\operatorname{Kum}^{n}$-type, see \cite[Proposition 3.38]{Yoshioka16};
    \item for OG6-type, see \cite[Corollary 1.3]{MongardiRapagnetta}; 
    \item for OG10-type, see \cite[Theorem 2.2]{MongardiOnorati}.
\end{itemize}
Therefore, \Cref{thm:introisomfinite} implies that finiteness of the pointed Shafarevich set \eqref{eq:pointshaf} for these four known deformation types. \qed
\subsection{A general construction of counter-examples}\label{subsec:CounterExample}
This last section is devoted to proving the following result.
\begin{proposition}\label{prop:eg}
    There exist infinitely many  families of  smooth irreducible symplectic varieties $\pi_i:\mathcal{X}_i\to C$ over some smooth integral complex pointed curve $(C, 0)$ satisfying that 
\begin{enumerate}
    \item $\cX_i$ are isomorphic over  $ C \setminus \{ 0\}$ for all $i,j$;
    \item the special fibers $\pi_i^{-1}(0)$ are all isomorphic;
    \item $\mathcal{X}_i\to C$ and $\mathcal{X}_j \to C$ are not isomorphic when $i\neq j$.
\end{enumerate} 
\end{proposition}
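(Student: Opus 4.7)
\textbf{Proof proposal for \Cref{prop:eg}.}

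The plan is to produce the counter-example by twisting a single family at the central fibre via an element of infinite order in $\operatorname{Bir}(X)/\operatorname{Aut}(X)$ for a well-chosen irreducible symplectic manifold $X$. The guiding principle is that the moduli problem of unpolarized hyper-K\"ahler varieties is highly non-separated: two families with the same generic fibre and the same special fibre can genuinely differ as algebraic spaces over $C$, and the obstruction to separatedness is measured precisely by $\operatorname{Bir}(X)$ modulo those birational self-maps that extend to biregular automorphisms of the total family. The crucial input is thus the existence of an $X$ with an infinite-order birational self-map that is not biregular.

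First I would choose $X$ to be a smooth irreducible symplectic manifold of, say, $K3^{[n]}$-type, for which $\operatorname{Bir}(X)/\operatorname{Aut}(X)$ contains an element $\varphi$ of infinite order. Such examples are classical: for $X = S^{[n]}$ with $S$ a projective K3 surface of Picard rank $\geq 2$, the chambers of the birational K\"ahler cone of $X$ form an infinite tessellation indexed by an infinite subgroup of $\operatorname{Bir}(X)/\operatorname{Aut}(X)$, generated by iterated Mukai flops along Lagrangian projective spaces. Next, I would build a smooth family $\mathcal{X} \to (C,0)$ over a pointed smooth curve with $\mathcal{X}_{0} \cong X$ whose generic fibre $\mathcal{X}_{\eta}$ has Picard rank one. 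Then $\operatorname{Bir}(\mathcal{X}_{\eta}) = \operatorname{Aut}(\mathcal{X}_{\eta})$ is finite, and no nontrivial power $\varphi^{n}$ extends to a biregular self-map of a Zariski neighbourhood of the central fibre, since otherwise the extension would restrict to an automorphism of $\mathcal{X}_{\eta}$ and descend to an element of $\operatorname{Aut}(X)$.

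For each $n \in \mathbb{Z}$ I would then construct a new locally trivial family $\mathcal{X}_{n} \to C$ by a \emph{twist at the central fibre}: in the \'etale or analytic topology, take the two charts given by the punctured family $\mathcal{X}|_{C \setminus \{0\}}$ and a formal (or \'etale) neighbourhood $\widehat{\mathcal{X}}_{0}$ of $X$ in $\mathcal{X}$, and reglue them along their common punctured overlap via $\varphi^{n}$, using the identification of the overlap with $\mathcal{X}_{\eta}$ on one side and with its $\varphi^{n}$-twist on the other. Artin's representability theorem for algebraic spaces, together with Artin approximation, produces $\mathcal{X}_{n}$ as a separated flat algebraic space over $C$; by construction $\mathcal{X}_{n}|_{C \setminus \{0\}} \cong \mathcal{X}|_{C \setminus \{0\}}$ and $(\mathcal{X}_{n})_{0} \cong X$, and local triviality persists because the twist takes place at one closed point.

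Finally, the pairwise non-isomorphism of $\mathcal{X}_{n}$ and $\mathcal{X}_{m}$ for $n \neq m$ will follow from the Matsusaka--Mumford theorem in the form of \Cref{cor:polarizedMatsusakaMumford} and \Cref{prop:unpolarizedMatsusaka-Mumford}: any $C$-isomorphism $\mathcal{X}_{n} \cong \mathcal{X}_{m}$ would restrict to a biregular automorphism of $\mathcal{X}_{\eta}$, hence (after passing to a cofinite subset of the index set, killing the finite group $\operatorname{Aut}(\mathcal{X}_{\eta})$) to the identity on the generic fibre, and would therefore extend to an isomorphism in codimension one of the total spaces that trivializes $\varphi^{n-m}$ on $X$, contradicting the infinite order of $\varphi$ modulo biregular automorphisms. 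I expect the principal obstacle to lie in the third step: verifying rigorously that the twisted pushout is a separated algebraic space flat over $C$ with the prescribed central fibre, and locally trivial in the sense of \Cref{def:LocallyTrivial}. This requires a delicate local model calculation around the indeterminacy locus of $\varphi^{n}$, for which the simultaneous resolution provided by \Cref{prop:SimultaneousResolution} together with equivariant blow-up techniques should be sufficient.
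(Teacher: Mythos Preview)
Your strategy is right in outline---produce infinitely many fillings of a fixed punctured family by iterating an infinite-order element of $\Bir(X)/\Aut(X)$---but the construction in your third paragraph has a genuine gap that is in fact the heart of the matter.

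To glue you need an \emph{isomorphism} on the overlap, yet $\varphi^{n}$ is only a birational self-map of the \emph{special fibre} $X$. The phrase ``reglue them along their common punctured overlap via $\varphi^{n}$'' hides the entire difficulty: why does a flop on $X$ deform to an honest isomorphism of the family over the punctured disc? This is not formal and is precisely what the paper's \Cref{prop:def-bir} supplies. There one first identifies the Kuranishi spaces $\Def^{\lt}(X,L)$ and $\Def^{\lt}(X,\Psi_{*}L)$ via the local Torelli theorem (the Hodge isometry $\Psi^{*}$ matches the two period maps), then pulls back the two universal families to a single curve $C$ chosen inside the Hilbert scheme of $(X,L)$; over $C^{*}$ the general fibre has Picard rank one, so the birational map becomes biregular there, and finally Artin's theorem on Moishezon spaces algebraizes the second family. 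Your appeal to ``Artin representability plus approximation'' skips all of this. Note also that the choice of $X$ matters: the paper uses the Hassett--Tschinkel Fano-of-lines example, where the infinite-order flop and the shape of $\NS(X)$ are verified explicitly; for an unspecified $S^{[n]}$ you still owe the existence of such a $\varphi$.

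Your non-isomorphism argument also differs from the paper's and is underspecified. The paper tracks a line bundle $\mathcal{L}_{i}$ on each $\mathcal{X}_{i}$ that specialises to $\Psi_{i,*}(L)$ and then observes that the chambers $\Psi_{i,*}(\Amp(X))$ are pairwise disjoint in $\overline{\Mov}(X)$: a $C$-isomorphism $\mathcal{X}_{i}\cong\mathcal{X}_{j}$ would force $\phi_{0,*}\Psi_{j,*}(L)=\Psi_{i,*}(L)$ for some $\phi_{0}\in\Aut(X)$, contradicting disjointness. Your pigeonhole argument on the finite group $\Aut(\mathcal{X}_{\eta})$ can be salvaged, but to conclude that the specialisation to $X$ of the identity-over-$C^{*}$ map is $\varphi^{n-m}$ (and not some other birational self-map of $X$) you again need the explicit construction of $\mathcal{X}_{n}$ furnished by \Cref{prop:def-bir}, not just an abstract gluing whose relation to $\varphi$ has never been pinned down.
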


The main ingredient is that birational pairs of smooth irreducible symplectic varieties are non-separated points in the moduli space. More precisely, we have the following  result, which is a mild strengthening of \cite[Theorem 4.6]{Huybrechts}, \cite[Proposition 2.1]{R14} and \cite[Theorem 6.16]{BL22}.

\begin{proposition}\label{prop:def-bir}
 Let $X$ be a $\QQ$-factorial terminal irreducible symplectic variety with $b_2(X) \geq 4$, and $L$ a very ample line bundle on $X$.
There exists a smooth integral complex pointed curve $(C,0)$   such that for any birational map $\Psi \colon X \dashrightarrow X'$ to a $\QQ$-factorial irreducible symplectic variety $X'$ with terminal singularities, there are locally trivial families of primitive symplectic varieties with Picard number one at the geometric generic fibers:
 \[
    \pi_1\colon\mathcal{X}_1\to C, \quad \pi_2\colon \mathcal{X}_2\to C
 \] 
 that are isomorphic over $C^* =C \setminus \{ 0\}$,  and endowed with line bundles $\mathcal{L}_{i}$ on $\mathcal{X}_i$ with 
 \[
 \cL_1|_{\cX_{C^*}} \simeq \cL_2|_{\cX_{C^*}},\quad 
 \mathcal{L}_{1}|_{\pi_{1}^{-1}(0)} \simeq L, \text{ and } \mathcal{L}_{2}|_{\pi_{2}^{-1}(0)} \simeq \Psi_*(L).
 \]
\end{proposition}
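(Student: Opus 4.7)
The plan is to realize $X$ and $X'$ as two different locally trivial limits over a common curve, exploiting the non-separatedness of the moduli of marked $\QQ$-factorial terminal primitive symplectic varieties under the period map. In the smooth case this is classical (Huybrechts), and the singular input is provided by \cite{BL22} together with the local Torelli theorem (\Cref{prop:localTorelliThm}).

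First, I would use $\Psi$ to identify the second cohomology lattices: since $\Psi$ is an isomorphism in codimension one between $\QQ$-factorial terminal primitive symplectic varieties, it induces a Hodge isometry
\[
\Psi_*\colon \rH^2(X,\ZZ)_{\tf} \xrightarrow{\sim} \rH^2(X',\ZZ)_{\tf}
\]
with respect to the Beauville--Bogomolov forms. Setting $\Lambda \coloneqq \rH^2(X,\ZZ)_{\tf}$ and $h \coloneqq c_1(L) \in \Lambda$, the class $c_1(\Psi_*L)$ is identified with $h$ under $\Psi_*$, and the polarized period points of $(X, L)$ and $(X', \Psi_*L)$ coincide in the period domain $\Omega_{\Lambda_h}$, where $\Lambda_h = h^\perp$.

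Second, I would choose a smooth quasi-projective curve in $\Omega_{\Lambda_h}$ through the common period point. Since $b_2(X) \geq 4$, we have $\dim \Omega_{\Lambda_h} = b_2(X)-3 \geq 1$, and one can pick a smooth connected pointed curve $(C,0)$ with a morphism $\iota\colon C \to \Omega_{\Lambda_h}$ sending $0$ to the common period point, such that at every $c \in C \setminus \{0\}$ the resulting Hodge structure has Picard rank exactly one (generated by $h$); this uses only that the Noether--Lefschetz loci in $\Omega_{\Lambda_h}$ form a countable union of proper closed analytic subsets. Using the algebraicity of the moduli stack $\HK_{2n,d}$ (\Cref{thm:ModuliStack}), the quasi-finiteness of the period map $\cP_h$ (\Cref{prop:quasifiniteperiodmap}), and \Cref{prop:localTorelliThm}, the curve $\iota$ lifts, after shrinking $C$ around $0$ if necessary, to two polarized algebraic families
\[
(\cX_1,\cL_1)\to C, \qquad (\cX_2,\cL_2)\to C
\]
with $\pi_1^{-1}(0) \cong X$, $\cL_1|_{\pi_1^{-1}(0)} \simeq L$, $\pi_2^{-1}(0)\cong X'$, and $\cL_2|_{\pi_2^{-1}(0)} \simeq \Psi_*L$.

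Third, I would extend $\Psi$ to a birational map $\Phi\colon \cX_1 \dashrightarrow \cX_2$ over $C$. Because $\Psi_*$ identifies the markings of $X$ and $X'$ and the two families realize the same polarized period $\iota$, the locally trivial deformation theory and singular Torelli results of \cite{BL22} (notably Theorem~6.16) produce a birational equivalence of the Kuranishi families of $X$ and $X'$ extending $\Psi$; passing through the moduli stack $\HK_{2n,d}$ then algebraizes this to a birational map $\Phi$ of algebraic families over $C$. Finally, over the generic point $\eta$ of $C$, the fiber $\cX_{1,\eta}$ has Picard rank one generated by $\cL_1|_\eta$, so its effective movable cone is the single ray $\RR_{>0}\cdot \cL_1|_\eta$; by the cone conjecture (\Cref{thm:coneconj}) there is then a unique $\QQ$-factorial terminal birational model of $\cX_{1,\eta}$, so $\Phi|_\eta$ must be a regular isomorphism, with $\Phi|_\eta^{*}\cL_2|_\eta \simeq \cL_1|_\eta$ by construction. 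Applying the polarized Matsusaka--Mumford theorem (\Cref{cor:polarizedMatsusakaMumford}) at every closed point of $C^{*} \coloneqq C\setminus\{0\}$ then promotes $\Phi|_{C^{*}}$ to a global isomorphism $\cX_1|_{C^{*}} \xrightarrow{\sim} \cX_2|_{C^{*}}$ matching $\cL_1|_{C^{*}}$ with $\cL_2|_{C^{*}}$. The main obstacle is the third step: producing the birational map $\Phi$ of total spaces from the mere birationality of the special fibers. This requires the singular locally trivial deformation theory and Torelli machinery of \cite{BL22}, together with a careful comparison between the analytic Kuranishi deformations and the algebraic families arising from $\HK_{2n,d}$.
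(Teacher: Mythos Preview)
There is a genuine gap in your second step. You claim to lift the period curve $\iota$ through the moduli stack $\HK_{2n,d}$ to obtain a \emph{polarized} family $(\cX_2,\cL_2)\to C$ with central fiber $(X',\Psi_*L)$. But $\Psi_*L$ is in general \emph{not} ample on $X'$: it is only a big and movable class, and it is ample precisely when $\Psi$ is already an isomorphism. Consequently the pair $(X',\Psi_*L)$ does not define a point of $\HK_{2n,d}$, and there is no lift of $\iota$ to $\HK_{2n,d}$ landing on $X'$. This is not a technicality one can repair by choosing another polarization on $X'$: any such choice changes the class $h$ and hence the period curve, destroying the identification over $C^*$ that you need. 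For the same reason, the polarized Matsusaka--Mumford theorem (\Cref{cor:polarizedMatsusakaMumford}) is not available at points of $C^*$ on the $\cX_2$ side.

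The paper circumvents this by a genuinely different construction. Only the first family $\cX_1\to C$ is produced algebraically, via a curve in the Hilbert scheme $\mathbf{Hilb}_{\PP^n}^X$. The second family is built \emph{analytically}: one identifies $\Def^{\lt}(X,L)\cong\Def^{\lt}(X',L')$ through the period map (this needs no ampleness of $L'=\Psi_*L$), pulls back the Kuranishi family of $X'$ along the curve, and uses the argument of \cite[Claim~2.2]{R14} to show that the fiberwise birational maps $\Psi_t$ are isomorphisms for general $t$ (since the general fiber has Picard rank one). This yields an analytic isomorphism over a punctured disc. Algebraicity of $\cX_2$ is then obtained a posteriori by gluing and invoking Artin's theorem that Moishezon spaces are algebraic spaces; in particular $\cX_2\to C$ need not be projective, which is exactly the point of the construction (cf.\ \Cref{prop:eg}). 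The line bundle $\cL_2$ is likewise not a polarization but is defined as the closure of $\cL_1|_{C^*}$ transported across the isomorphism.
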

\begin{proof}
The statement follows from the argument in \cite[Proposition 2.1]{R14}, which we sketch here.
Denote by $\Lambda$ the Beauville–Bogomolov lattice $\rH^2(X,\ZZ)_{\tf}$. There exists a universal deformation $$\pi \colon \mathscr{X} \to \Def^{\lt} (X),$$ where $\Def^{\lt} (X)$ is the Kuranishi space of deformations of $X$. By \Cref{prop:localTorelliThm} (or \cite{Beauville83localtorelli}), there is a period map $\cP \colon \Def^{\lt}(X) \to \Omega_{\Lambda}$,
which is a local isomorphism. The birational transformation $\Psi$ induces a Hodge isometry 
\[
\Psi^* \colon \rH^2(X', \ZZ)_{\tr} \xrightarrow{\sim} \rH^2(X,\ZZ)_{\tr}.
\]
Any $\Lambda$-marking 
$\varphi \colon \rH^2(X,\ZZ)_{\tf} \to \Lambda$ induces a $\Lambda$-marking $\varphi' \coloneqq \varphi \circ \Psi^*$ on $X'$. Under this marking, one can also define a period map $\cP' \colon \Def^{\lt}(X') \to \Omega_{\Lambda}$ and $\cP(0) = \cP'(0)$ in $\Omega_{\Lambda}$.

Denote $\Def^{\lt}(X, L)$ for the Kuranishi space of deformations of pairs $(X,L)$. It is a closed subspace of 
$\Def^{\lt} (X)$ of dimension $b_2 -3$ given by a smooth hypersurface by \cite[1.14]{Huybrechts}.
Let $i \colon X \hookrightarrow \PP^n$ be an embedding associated with $L$,
and $\mathbf{Hilb}_{\PP^n}^X$ the Hilbert scheme parametrizing closed subschemes in $\PP^n$ which are deformation equivalent to $X$. 
Up to shrinking $\Def^{\lt}(X, L) $,  there exists  an analytic open subset
$W^{\an} \subseteq \mathbf{Hilb}_{\PP^n}^{X,\an}$ with a proper surjective map of complex analytic spaces
\[
W^{\an} \rightarrow \Def^{\lt} (X,L).\]
One can find a smooth integral complex pointed curve $(C,0) $ together with a morphism $g:C\to %
{\mathbf{Hilb}_{\PP^n}^{X}}
$ 
with $g(0)=X$. 
 This gives a family
\[
\pi_{C} \colon \mathcal{X}_{C} \rightarrow C
\]
by pulling back the universal family on $\mathbf{Hilb}_{\PP^n}^{X}$. We may assume that $\pi_{C}$ is a family of smooth irreducible symplectic varieties.
Moreover, we may assume
that $\mathcal{X}_{C,t}$ is of Picrad rank 1
 for general $t\in C$.

We put $C^{\an}:= g^{-1} (W^{\an})$, and let $\pi_{C^{\an}} \colon \mathcal{X}_{C^{\an}} \rightarrow C^{\an}$ be the restriction of $\pi_{C}$.
Let $L ' \subset \Pic (X')$ be $\Psi_*(L)$.
Note that, $L'$ is not necessarily a polarization (it is a polarization if and only if $\Psi$ is an isomorphism). We can identify $\Def^{\lt}(X, L)\cong \Def^{\lt}(X',L')$  via the period maps $\cP'\circ \cP^{-1}$ (after possibly a shrinking). 
 Note that, under this identification, one get another family $\pi' \colon \mathcal{X}' \rightarrow \Def^{\lt}(X, L)$ whose central fiber over $0$ is $X'$.
 Let $\pi'_{C^{\an}} \colon \mathcal{X}^{'}_{C^{\an}} \rightarrow C^{\an}$ be the restriction of $\pi'$.
 Note that the fibers of $\pi'_{C^{an}}$ are all projective by the projectivity criterion (\cite[Theorem 2]{HuybrechtsErratum}) since they have positive line bundle.
 The argument in \cite[Claim 2.2]{R14} shows that there is a birational map  
$$\Psi_t \colon \mathcal{X}_{C^{\an},t} \dashrightarrow \mathcal{X}_{C^{\an},t}'$$
for general $t\in C^{\an}$ and the specialization of 
the Hodge isometry 
\begin{equation}
    \Psi_t^\ast:\rH^2(\mathcal{X}'_{C^{\an},t}, \ZZ)\cong \rH^2(\mathcal{X}_{C^{\an},t},\ZZ)
\end{equation}
to the special fiber is $\Psi^\ast: \rH^2(\mathcal{X}'_0,\ZZ)\cong \rH^2(\mathcal{X}_0,\ZZ)$. 
Moreover, by the argument after \cite[Claim 2.2]{R14}, $\mathcal{X}|_{C^{\an, \ast}}$ is isomorphic to $\mathcal{X}'|_{C^{\an,\ast}}$  after shrinking $C^{\an}$. 

Define a complex manifold $\mathcal{X}'_{C}$ by gluing $\mathcal{X}'_{C^{\an}}$ into $\mathcal{X}_{C} \setminus X$ along the isomorphism $\mathcal{X}_{C^{\an, \ast}} \simeq \mathcal{X}'_{C^{\an, \ast}}$.
Then $\mathcal{X}'_{C}$ is a Zariski open subset of $\overline{\mathcal{X}'_{C}}$ that is a Moishezon space obtained by gluing $\overline{\mathcal{X}_{C}} \setminus X$ with $\mathcal{X}'$ along $\mathcal{X} \setminus X$, where $\overline{\mathcal{X}_C}$ is a closure of $\mathcal{X}$ in any projective embedding.
By Artin's theorem (\cite[Theorem 7.3]{Artin70}), $\mathcal{X}'_{C}$ is an algebraic space.
Let $\mathcal{L}_{1}$ on $\mathcal{X}_{1}$ be the restriction of $\cO(1)$, which is a universal line bundle over $\mathbf{Hilb}_{\PP^n}^X$.
Let $\mathcal{X}_{2}$ be the closure of the restriction of $\mathcal{L}_{1}|_{\mathcal{X}_{1} \setminus X}$ to $\mathcal{X}_{2} \setminus X$.
Then by construction, $\mathcal{L}_{2}$ specializes to $L'$.
It finishes the proof.
\end{proof}

\subsection*{Proof of Proposition \ref{prop:eg}}

Here we give a  construction based on the example given in \cite{HT10}, which has infinitely many distinct flops $\Psi_i\in \Bir(X)$.

Let  $Y\subseteq \PP^5$ be a smooth cubic fourfold containing a cubic scroll $S$. Set
\[
\mathrm{Hdg}(Y) = \rH^{2,2}(Y,\CC) \cap \rH^4(Y,\ZZ).
\]
Choose $Y$ to be general such that $\mathrm{Hdg}(Y)$ is generated by the  classes $[H^2]$ and $[S]$. Let $X$ be the Fano variety of lines in $Y$. The incidence correspondence induces a map
\[
  \alpha \colon \rH^4(Y,\ZZ)\to \rH^2(X,\ZZ).
\]
Set $L=\alpha([H^2])$ and $\ell=\alpha([S])$. Note that $L$ is an ample class on $X$; in fact it is proportional to the Pl\"ucker polarization. Then the Néron--Severi lattice of $X$ under the Beauville--Bogomolov form is given by
\begin{equation*}
 \NS(X) =\left(
\begin{array}{c|ccc}   & L & \ell  
\\ \hline  L  &  6 & 6  \\ \ell & 6  & 2 
 \\\end{array}\right).
\end{equation*}
See \cite[\S 7]{HT10}.
There are no isotropic integral classes and $(-2)$-classes in $\NS(X)$. 
By \cite[Proposition 7.2]{HT10}, the nef cone $\Nef(X) \subseteq \NS(X)_{\RR}$ is 
\[
\Conv(7L - 3\ell, L + 3\ell).
\]
As shown in \cite[Theorem 7.4]{HT10}, there is a flop $\Psi\in \Bir(X)$ of infinite order, whose action on $\NS(X)$ is 
\begin{equation}
\begin{aligned}
      \Psi^\ast(L)&=11L-6\ell\,,  \\
      \Psi^\ast(\ell)& =2L-\ell 
\end{aligned}
\end{equation}
So we can apply Proposition \ref{prop:def-bir} to the self birational maps for all $i$, 
\[
\Psi_i\coloneqq \underbrace{ \Psi\circ  \cdots \circ \Psi}_{i \text{ times}} \colon X \dashrightarrow X
\]
to obtain infinitely many algebraic families $\mathcal{X}_i\to C$. For any $i \neq j$, we have 
\begin{equation}
\label{eqn:AmpleDisjoint}
    \Psi_i(\Amp(X)) \cap \Psi_j(\Amp(X)) = \varnothing
\end{equation}
inside the movable cone $\overline{\Mov}(X)$.

Now we show that $\cX_i\to C$ are distinct families.  Assume, for contradiction, that there exists an isomorphism of families
\[
\begin{tikzcd}
 \cX_i \ar[r,"\phi","\sim"'] \ar[d] & \cX_j \ar[d] \\  
 C \ar[r, "\id_C"] & C
\end{tikzcd}
\]
for some $i\neq j$. Since the generic fiber has Picard number one,  by restricting to the special fiber, one can obtain
an  automorphism \( \phi_0: X \to X \)
whose action on $\NS(X)$ satisfies
\[
\Psi_{i,*}(L) = \Sp_i(\phi_*(\cL_{j,\overline{\eta}})) = \phi_{0,*}\Sp_j(\cL_{j,\overline{\eta}}) = \phi_{0,*}\Psi_{j,\ast}(L),
\]
where $\Sp_i$ and $\Sp_j$ are specialization of Néron--Severi lattices along these two families respectively. Since $\phi_{0, *}$  clearly preserves the decomposition of the movable cone of $X$ into ample chambers, this contradicts \eqref{eqn:AmpleDisjoint}.

\section*{Declaration}
\subsection*{Funding}L.~Fu is supported by the University of Strasbourg Institute for Advanced Study (USIAS), by the Agence Nationale de la Recherche (ANR) under projects ANR-20-CE40-0023 and ANR-24-CE40-4098, and by the CNRS project \textit{International Emerging Actions} (IEA). Z.~Li is supported by NSFC grant (No. 12425105, No.  and No. 12171090) and Shanghai Pilot Program for Basic Research (No. 21TQ00). Zhiyuan is also a member of LMNS. 
T.~Takamatsu is supported by JSPS KAKENHI Grant Numbers JP22KJ1780 and JP25K17228.
H.~Zou is supported by the Deutsche Forschungsgemeinschaft (DFG, German Research Foundation) – Project-ID 491392403 – TRR 358.
\subsection*{Conflict of interest}The authors have no competing interests to declare that are relevant to the content of this article.

\bibliography{PointedShaf}
\bibliographystyle{amsplain}
\end{document}